\newtheorem{theorem}{Theorem}
\newtheorem{lemma}[theorem]{Lemma}
\newtheorem{proposition}[theorem]{Proposition}
\newtheorem{corollary}[theorem]{Corollary}
\newtheorem{remark}[theorem]{Remark}
\newtheorem{definition}[theorem]{Definition}
\newtheorem{condition}[theorem]{Condition}
\newcommand{\be}{ \begin{equation}}
\newcommand{\ee}{\end{equation}}
\newcommand{\ben}{ \begin{equation*}}
\newcommand{\een}{\end{equation*}}
\def\E{{\mathbb E}}
\def\P{{\mathbb P}}
\def\root{{\emptyset}}
\newcommand{\e}{{\rm e}}
\newcommand{\CMnd}{{\rm CM}_n(\boldsymbol{d})}
\newcommand{\CMnD}{{\rm CM}_n(\boldsymbol{D})}
\newcommand{\bfdit}{\boldsymbol{d}}
\newcommand{\Ver}{U}
\newcommand{\eqn}[1]{\begin{equation}#1\end{equation}}
\newcommand{\eqan}[1]{\begin{align}#1\end{align}}
\newcommand{\op}{o_{\sss {\mathbb P}}}
\newcommand{\convp}{\stackrel{\sss {\mathbb P}}{\longrightarrow}}
\newcommand{\convd}{\stackrel{\sss d}{\longrightarrow}}
\newcommand{\convas}{\stackrel{\sss a.s.}{\longrightarrow}}
\newcommand{\prob}{\mathbb P}
\newcommand{\expec}{\mathbb E}
\newcommand{\SWG}{{\sf SWG}}
\newcommand{\BP}{{\sf BP}}
\newcommand{\Wcal}{\mathcal{W}}
\newcommand{\Mcal}{\mathcal{M}}
\newcommand{\barNt}{\bar{N}^{\sss {\mathcal{F}}}_n(t)}
\newcommand{\msg}{\mathscr{g}}
\def\1{{\mathchoice {1\mskip-4mu\mathrm l}      
{1\mskip-4mu\mathrm l}
{1\mskip-4.5mu\mathrm l} {1\mskip-5mu\mathrm l}}}
\newcommand{\indic}[1]{\1_{\{#1\}}}
\newcommand{\indicwo}[1]{\1_{#1}}
\newcommand{\sss}{\scriptscriptstyle}
\newcommand{\nn}{\nonumber}
\newcommand{\vep}{\varepsilon}
\renewcommand{\root}{\varnothing}
\newcommand{\Time}{T}
\DeclareSymbolFont{extraup}{U}{zavm}{m}{n}
\DeclareMathSymbol{\varheart}{\mathalpha}{extraup}{86}
\DeclareMathSymbol{\vardiamond}{\mathalpha}{extraup}{87}
\newcommand{\ensymboldefinition}{\blacktriangleleft}
\newcommand{\seva}[1]{\todo[inline, color=lime]{Seva: #1}}
\title{It is hard to kill fake news}
\author{Remco van der Hofstad}\address{Eindhoven University of Technology, P.O. Box 513, 5600 MB Eindhoven, The Netherlands. E-mail: rhofstad@win.tue.nl}
\author{Seva Shneer}\address{School of MACS, Heriot-Watt University, Edinburgh, EH14 4AS, United Kingdom. E-mail: V.Shneer@hw.ac.uk}
\begin{document}
\maketitle

\begin{abstract} 
We study a model for the spread of fake news, where first a piece of fake news is spread from a location in a network, followed by a correction to the news. We assume that both the fake as well as correct news travel as first-passage percolations or SI epidemics with i.i.d.\ traversal times, possibly with different distributions and dependence. 

We make the (hopeful) assumption that once a vertex in the network hears the correct news, they are immediately convinced that this is indeed correct, and continue to spread the correct news. Even in this optimistic scenario, our main results show that it is very difficult to remove the fake news from the network, even when the correct news would spread much faster than the fake news. We show this on the configuration model, a model that has gained enormous popularity as a simple, yet flexible, model for real-world networks. 

The crux of the proof is the realization that this problem on a branching process tree (which is known to be the local limit of the configuration model) can be  reformulated in terms of the maximum of a branching random walk, a topic that has attracted considerable attention in the past decade. We then extend the results to the graph setting using couplings to branching processes, local convergence and detailed estimates on first-passage percolation on random graphs as derived in \cite{BhaHofHoo17}. Remarkably, despite the fact that our proofs for the configuration model rely on its local branching process structure, the condition for strong survival on  a finite number of generations of a branching process tree is {\em different} from that on the configuration model.
\end{abstract}

\section{Introduction}
\label{sec-intro}

%
%

Fake news has a disrupting effect on society. It has influenced the American elections in 2016, see \cite{AllGen17} for an extensive survey of the literature, as well as the Brexit referendum in the United Kingdom, see \cite{Hol21}.  We refer also to \cite{VosRoyAra18} for an extensive study of the spread of fake news on social media with results echoing the title of our paper. At present, a lot of research is conducted, particularly about how one can detect fake news automatically through clever algorithms \cite{ConRubChe15}.

Fake news exists of various types \cite{Lazer_etal18}, where ``fake news'' is defined to be ``fabricated information that mimics news media content in form but not in organisational process or intent'', while `` fake-news outlets, in turn, lack the news media's editorial norms and processes for ensuring the accuracy and credibility of information.'' The following distinctions are being made there: ``Fake news overlaps with other information disorders, such as misinformation (false or misleading information) and disinformation (false information that is purposely spread to deceive people).''

Sometimes, however, incorrect news is spread with no harmful intention, by parties with the best possible intentions. It is such news that we aim to model in this paper. We start from a rather optimistic perspective, where a well-intentioned party (such as a reliable news agency) has spread a piece of misinformation, but then realizes that it is false or misleading and retracts it. This retraction then also spreads through the network, possibly through different paths. We are optimistic, as we assume that a person who has heard the fake news, upon hearing the correction, is immediately convinced by it. In reality, we know that this is not the case, again see \cite{AllGen17}. However, even in this optimistic scenario, the society's network structure can make the fake news rather persistent. In our model, though, eventually the correct news will push out the fake news completely. 

The main innovations of this paper are that we define several ways how fake news can survive for a long time, and give conditions for them to occur on the configuration model and branching process trees. More precisely, fake news can {\em weakly survive}, meaning that the number of people that are exposed to it grows arbitrarily large with positive probability. A stronger notion of fake news survival is {\em strong survival}, in which a positive proportion of the population is with positive probability exposed to the fake news. The models for the spread of the news rely on {\em first-passage percolation}, a standard model for the spread of rumours that has attracted considerable attention in the random graph community. In one of the most popular network models, the configuration model, we then give rather precise conditions for weak and strong survival to occur. Even though these conditions do not cover all possible scenarios, they do cover most of them. We also investigate what happens when spreading fake news from the root of a branching process tree, which is relevant since the local limit of the configuration model is a (unimodular) branching process. Remarkably, the conditions for strong survival are {\em different} in these two settings, while the conditions for weak survival do agree. This shows that strong survival of fake news is highly non-local.

The remainder of this paper is organized as follows. In Section \ref{sec-mod-res}, we present the precise model, our results and a discussion of them. In Section \ref{sec-tree}, we present our results when applied to the source of the news being the root of a finite number of generations of a branching process. Finally, in Section \ref{sec-CM}, we present our proofs for the configuration model.

\section{Models, notation and results}
\label{sec-mod-res}

\subsection{The model}
\label{sec-mod} 
In this section, we define our model. We start by defining the precise setting of the fake news spreading, followed by the definition of the configuration model, which is the graph on which our information diffuses.
\medskip

\paragraph{\bf The fake news model.}
For now, assume that we are on a general graph $G=(V(G),E(G))$. Since our graphs are finite, we assume that $|V(G)|=n$ and write $V(G)=[n]=\{1, \ldots, n\}$. We are interested in large graph asymptotics, so we often take $n\rightarrow \infty$. Later we will work on a configuration model random graph. Denote by $L_e^{\sss \mathcal{F}}$ (resp.\ $L_e^{\sss \mathcal{R}}$) the traversal time of edge $e$ by fake (resp. correct) news. We will assume that each of these collections consists of i.i.d.\ random variables and will denote by $L^{\sss \mathcal{F}}$ and $L^{\sss \mathcal{R}}$ generic random variables having the corresponding distributions $F^{\sss \mathcal{F}}$ and $F^{\sss \mathcal{R}}$. An important special case consists of exponential traversal times, for which we denote the corresponding parameters by $\mu^{\sss \mathcal{F}}$ and $\mu^{\sss \mathcal{R}}$. We emphasize that $L^{\sss \mathcal{F}}$ and $L^{\sss \mathcal{R}}$ may be dependent, while $((L^{\sss \mathcal{F}}, L^{\sss \mathcal{R}}))_{e \in E(G)}$ is a collection of i.i.d.\ vectors.


\begin{definition}[Start of news epidemics]
\label{def-start-delay}
{\rm We assume that the fake and correct news start at a source that is chosen uniformly at random from the vertex set $[n]$. For the spread of the fake and correct news epidemics, we will consider two possible starting cases. In both cases, fake news is not killed at the source. In the first, we assume that both the fake- and the correct-news epidemics start at the same time. In the second case, we introduce a {\em delay} $d$ that describes how much later the correct news starts compared to the fake news, effectively making it easier for the fake news to survive. Since adding a delay makes the notation more cumbersome, we focus on the setting without delay in our proof, and only highlight the differences. We frequently use that the setting without delay can be obtained by taking $d=0$.}\hfill$\ensymboldefinition$
\end{definition} 

To avoid trivialities, we assume that
	\begin{equation} 
	\label{eq:feasibility}
	\P(L^{\sss \mathcal{F}} < L^{\sss \mathcal{R}}) > 0.
	\end{equation}

This model is a particular case of the so-called predator-prey models, usually studied on lattices \cite{Bordenave08,Hag98,Kord05,KordLa05,Kort16}. In \cite{Kord05} the exact model we consider was referred to as the escape model and was studied on a regular tree under assumptions equivalent to both $L^{\sss \mathcal{F}}$ and $L^{\sss \mathcal{R}}$ having Exponential distributions. We will show (see Remark \ref{rem-exp}) that our results recover those of \cite{Kord05}. The author of \cite{Kort16} draws a connection between predator-prey models and branching random walks - we exploit the same connection in our paper.
	
This model is also closely related to {\em first-passage percolation}, a model that has attracted considerable attention in the recent years, see \cite{BarHofKom17, Bham08, BhaHofHoo09a, BhaHofHoo09b, BhaHofHoo12, BhaHofHoo17} and the references therein, and \cite[Chapter 2]{Hofs20} for an extensive survey. The model is also closely related to competition on (random) graphs, see e.g., \cite{AhlDeiJan17, BarHofKom15, DeiHof16, DeiHofSfr22, HofKom15}.
\smallskip

Denote by $N^{\sss \mathcal{F}}_n$ the number of vertices reached by the fake-news epidemic before being reached by the correct news epidemic. The main aim of this paper is to establish when the fake news survives. For this, we define three notions of survival:

\begin{definition}[Weak survival]
\label{def-weak-surv}
{\rm We say that the fake news survives {\em in the weak sense} when}
	\eqn{
	\liminf_{K\rightarrow \infty}\liminf_{n\rightarrow \infty}\prob(N^{\sss \mathcal{F}}_n\geq K)>0.
	}
\hfill$\ensymboldefinition$
\end{definition}

When the fake news survives in the weak sense, then, with a uniformly positive probability, it reaches an arbitrarily large number of vertices. This should be contrasted to {\em strong survival}, for which, with uniformly positive probability, the fake news reaches a positive proportion of the vertices:
\begin{definition}[Strong survival]
\label{def-strong-surv}
{\rm We say that the fake news survives {\em in the strong sense} when}
	\eqn{
	\liminf_{\eta\searrow 0}\liminf_{n\rightarrow \infty}\prob(N^{\sss \mathcal{F}}_n\geq \eta n)>0.
	}
\hfill$\ensymboldefinition$
\end{definition}

Of course, weak survival is quite weak, while strong survival possibly too strong. The definition below quantifies intermediate survival phases:

\begin{definition}[Intermediate survival]
\label{def-intermediate-surv}
{\rm We say that the fake news {\em lower survives} {\em in the $k_n$-intermediate sense} for some $k_n\rightarrow \infty$ when there exists a constant $C>1$ such that
	\eqn{
	\liminf_{n\rightarrow \infty}\prob(N^{\sss \mathcal{F}}_n\geq k_n/C)>0.
	}
We say that the fake news {\em upper dies} {\em in the $k_n$-intermediate sense} for some $k_n\rightarrow \infty$ when $(N^{\sss \mathcal{F}}_n/k_n)_{n\geq 1}$ is a tight sequence of random variables.
We say that the fake news survives {\em in the $k_n$-intermediate sense} if it lower survives and upper dies in the $k_n$-intermediate sense.}\hfill$\ensymboldefinition$ 
\end{definition}

In terms of Definition \ref{def-intermediate-surv}, strong survival is equivalent to $n$-intermediate survival. Weak survival is not quite comparable to intermediate survival, due to the related upper bound on $N^{\sss \mathcal{F}}_n$. The above definitions make sense on any finite graph. We next specify the type of graphs on which we consider our information diffusion process.
\medskip

\paragraph{\bf Configuration model.}
Fix an integer $n$ that is the number of vertices in the random graph and denote the set $\{1,2,\ldots n\}$ by $[n]$. Consider a sequence of degrees $\bfdit=(d_i)_{i\in[n]}$. 
Without loss of generality, we assume that $d_j\geq 1$ for all $j\in [n]$, since when $d_j=0$, vertex $j$ is isolated and can be removed from the graph. 
Furthermore, we assume that the total degree
    	\eqn{
    	\ell_n=\sum_{j\in [n]} d_j
    	}
is even. When $\ell_n$ is odd we increase the degree $d_n$ by 1.  For $n$ large, this will not change the results and we will therefore
ignore this effect. We wish to construct a (multi)graph such that $\bfdit=(d_i)_{i\in[n]}$ are the degrees of the $n$ vertices. 
To construct the multigraph where vertex $j$ has degree $d_j$ for all $j\in [n]$, we have $n$ separate vertices and incident to vertex $j$, we have $d_j$ half-edges. Every half-edge needs to be connected to another half-edge to form an edge, and by forming all edges we build the graph. For this, the half-edges are numbered in an arbitrary order from $1$ to $\ell_n$. We start by randomly connecting the first half-edge with one of the $\ell_n-1$ remaining half-edges. Once paired, two half-edges form a single edge of the multigraph, and the half-edges are removed from the list of half-edges that need to be paired. Hence, a half-edge can be seen as the left or the right half of an edge. We continue the procedure of randomly choosing and pairing the half-edges until all half-edges are connected, and, following Bollob\'as \cite{Boll80b}, we call the resulting graph the {\it configuration model with degree sequence $\bfdit$}, abbreviated as $\CMnd$. 
See \cite[Chapter 7]{Hofs17} for an extensive introduction to this model.

We impose certain \emph{regularity conditions} on the degree sequence $\bfdit$. In order to state these assumptions, we introduce some notation. We denote the degree of a uniformly chosen vertex $\Ver$ in $[n]$ by $D_n=d_{\Ver}$. The random variable $D_n$ has distribution function $F_n$ given by
    \eqn{
    \label{def-Fn-CM}
    F_n(x)=\frac{1}{n} \sum_{j\in [n]} \indic{d_j\leq x},
    }
which is the {\em empirical distribution of the degrees.}
We assume that the vertex degrees satisfy the following \emph{regularity conditions:}

\begin{condition}[Regularity conditions for vertex degrees]
\label{cond-degrees-regcond}
~\\
{\bf (a) Weak convergence of vertex degrees.}
There exists a distribution function $F$ such that, as $n\rightarrow \infty$,
    \eqn{
    \label{Dn-weak-conv}
    D_n\convd D,
    }
where $D_n$ and $D$ have distribution functions $F_n$ and $F$, respectively.\\
Equivalently, for any $x$, 
    \eqn{
    \label{conv-Fn-CM}
    \lim_{n\rightarrow \infty} F_n(x)=F(x).
    }
Further, we assume that $F(1)=0$, i.e., $\prob(D\geq 2)=1$.\\
{\bf (b) Convergence of average vertex degrees.} As $n\rightarrow \infty$,
    \eqn{
    \label{conv-mom-Dn}
    \expec[D_n]\rightarrow \expec[D] \in (0,\infty),
    }
where $D_n$ and $D$ have distribution functions $F_n$ and $F$ from part (a), respectively.\\
{\bf (c) Convergence of second moment vertex degrees.} As $n\rightarrow \infty$,
    \eqn{
    \label{conv-sec-mom-Dn}
    \expec[D_n^2]\rightarrow \expec[D^2] < \infty,
    }
where again $D_n$ and $D$ have distribution functions $F_n$ and $F$ from part (a), respectively.
\end{condition}

Two canonical examples of such degree sequences are when we take $d_i=[1-F]^{-1}(i/n)$, where $F$ is the distribution function of an integer-valued random variable, and when $(d_i)_{i\in[n]}$ constitutes a {\em realization} of an i.i.d.\ sequence of random variables with distribution function $F$. We denote the asymptotic degrees distribution by $\boldsymbol{p} = (p_k)_{k\geq 1}$ with $p_k=\prob(D=k)$. See \cite[Chapter 7]{Hofs17} for an extensive discussion of the configuration model and the Degree Regularity Condition \ref{cond-degrees-regcond}.

We let
$$
\nu = \E[D^*]-1,
$$
where $D^*$ is the sized-biased version of $D$.

\subsection{Main results}
\label{sec-res}
Let $\varphi^{\sss \mathcal{R}}(s) = \E[\e^{sL^{\sss \mathcal{R}}}]$, $\varphi^{\sss \mathcal{F}}(s) = \E[\e^{sL^{\sss \mathcal{F}}}]$ and
	\begin{equation}
	\psi(s) = \E[\e^{s(L^{\sss \mathcal{R}} - L^{\sss \mathcal{F}})}].
	\end{equation}

\begin{theorem}[Weak survival]
\label{thm-weak-CM}
Condition on the news originating in the giant component.
\begin{itemize} 
\item[(i)] If $\E[L^{\sss \mathcal{R}}] > \E[L^{\sss \mathcal{F}}]$, then fake news survives weakly.
\item[(ii)] Assume $\E[L^{\sss \mathcal{R}}] \le \E[L^{\sss \mathcal{F}}]$ and assume, in addition, that $\psi(s) < \infty$ for some $s>0$. Assume further that there exists $0 < h < s$ such that $\psi'(h) = 0$. Denote  $\rho = \psi(h)$. If $\rho > 1/\nu$, then fake news weakly survives. If $\rho \leq 1/\nu$, then fake news does not survive in the weak sense.
\item[(iii)] Assume $\E[L^{\sss \mathcal{R}}] \le \E[L^{\sss \mathcal{F}}]$. If $\psi(s) = \infty$ for all $s>0$, then fake news weakly survives.
\end{itemize}
\end{theorem}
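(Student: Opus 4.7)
The plan is to reformulate the fake-news problem as a branching random walk (BRW) above the barrier $0$ on the local limit of $\CMnd$, and then to transfer the conclusion from the limiting tree to the graph via Benjamini--Schramm local convergence and the first-passage couplings of \cite{BhaHofHoo17}. On any tree there is a unique path from the root $\root$ to each vertex $v$, so $v$ is reached first by the fake news if and only if
$$
S_v \ := \ \sum_{e \in \mathrm{path}(\root,v)} \bigl(L^{\sss \mathcal{R}}_e - L^{\sss \mathcal{F}}_e\bigr) \ > \ 0.
$$
The local limit of $\CMnd$ is the unimodular Galton--Watson tree with root offspring $D$ and subsequent offspring of law $D^*-1$ of mean $\nu$; under this limit $(S_v)_v$ is a BRW with i.i.d.\ increments $X := L^{\sss \mathcal{R}} - L^{\sss \mathcal{F}}$ and non-root offspring mean $\nu$. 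Weak survival of fake news thus corresponds to infinitely many $S_v$ being strictly positive.

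First I would prove the tree analogue by classical BRW-above-barrier theory. The key estimate is the Chernoff--Markov bound
$$
\expec\bigl[\#\{v : |v|=n,\ S_v > 0\}\bigr] \ = \ \nu^n\,\prob(S_n > 0) \ \leq \ \bigl(\nu\,\psi(t)\bigr)^n
$$
valid for every $t \geq 0$ in the domain of $\psi$, where $S_n$ is a sum of $n$ i.i.d.\ copies of $X$. In case (ii) with $\rho \leq 1/\nu$, choosing $t=h$ gives $(\nu\rho)^n \leq 1$; combined with the standard BRW fact that at criticality $\nu\rho = 1$ the maximum diverges to $-\infty$ a.s.\ (with a logarithmic correction), this yields almost-sure finiteness of $\{v : S_v > 0\}$ and hence no weak survival. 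For $\rho > 1/\nu$, the Cram\'er tilt at $h$ produces a BRW with non-root offspring mean $\nu\rho > 1$ and centred increments; such a BRW has infinitely many particles above $0$ with positive probability by a standard second-moment / Biggins martingale argument, and undoing the tilt delivers weak survival of the original BRW. Case (i) is immediate: $\expec[X] > 0$ forces the BRW to drift to $+\infty$, so on non-extinction of the underlying Galton--Watson tree (positive probability since $\nu > 1$ on the giant) infinitely many $S_v$ are positive. In case (iii), the absence of every positive exponential moment prevents exponential decay of $\prob(S_n > 0)$; a direct heavy-tailed argument, in which one conditions on an anomalously large first-generation displacement and observes that the descendant sub-tree then stays above $0$ with positive probability, yields weak survival.

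To transfer the tree dichotomy to $\CMnd$, I would exploit that $\{N^{\sss \mathcal{F}}_n \geq K\}$ is a genuinely local event: it depends only on the breadth-first neighbourhood of the source out to some finite depth $r = r(K)$, together with the traversal times inside it. The joint first-passage exploration from a uniform source in $\CMnd$ couples to two copies of the unimodular Galton--Watson branching process out to any fixed depth with probability tending to one, as developed in \cite{BhaHofHoo17}. Sending $n \to \infty$ first and then $K \to \infty$ therefore ports the tree-level dichotomy directly to the graph; conditioning on the source lying in the giant only alters the root offspring distribution (not $\nu$) and guarantees non-extinction of the limiting tree with positive probability.

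The main obstacle will be the survival half of case (ii) near the threshold $\rho \searrow 1/\nu$: the Cram\'er-tilted BRW is only barely supercritical, so positive-probability survival above $0$ requires the delicate Biggins martingale / second-moment machinery, and on the graph side one must additionally verify that the coupling to the branching process holds at the exploration depth at which this argument harvests its contributing particles. By contrast, case (i), the bulk of the non-survival side of (ii), and the heavy-tailed case (iii) are comparatively routine once the BRW reformulation is in place.
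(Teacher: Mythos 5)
Your overall strategy -- reduce to the local limit (the unimodular branching process tree), reformulate the fake-news dynamics as a branching random walk relative to a barrier, apply BRW-with-absorption theory, then port the dichotomy back to $\CMnd$ via local convergence and the couplings of \cite{BhaHofHoo17} -- is precisely the route the paper takes: the proof in Section \ref{sec-weak-CM} consists of exactly the local-convergence reduction followed by an appeal to Theorem \ref{thm:strong_tree} (for part (i)) and Theorem \ref{thm:main_tree} (for parts (ii) and (iii)), the latter of which is established by citing \cite{BigLubShwWei91}, \cite{AidJaf11}, \cite{King75}, \cite{Bigg76} together with the random-walk passage-time asymptotics $\P(\tau_d>k) \sim C k^{-3/2}\rho^{-k}$.

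There is, however, a genuine inaccuracy in your reformulation that you should fix. You assert that $v$ is reached first by the fake news if and only if $S_v>0$, and then say that weak survival corresponds to infinitely many $S_v$ being positive. This is not the right condition. Because a vertex that is overtaken by the correct news stops propagating the fake news, $v$ is reached first by the fake news if and only if $S_u>0$ for \emph{every} $u$ on the path from $\root$ to $v$, i.e.\ the walk along the spine stays above the barrier for all of its history; the paper encodes this via the first-passage time $\tau_d=\inf\{k\ge 1: S_k\le -d\}$, and $Z_k^{\sss\mathcal{F}}$ has expectation $\nu^k\,\P(\tau_d>k)$, not $\nu^k\,\P(S_k>0)$. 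Your slip is harmless for the non-survival direction: since $\P(\tau_d>k)\le\P(S_k>0)$, the first-moment/Chernoff bound still controls $\E[Z_k^{\sss\mathcal{F}}]$, and likewise the critical BRW maximum going to $-\infty$ a.s.\ immediately kills $Z_k^{\sss\mathcal{F}}$ for large $k$. But for the \emph{survival} direction it matters: having many endpoints with $S_v>0$ does not imply any of them is actually reached by the fake news. The Cram\'er-tilt/second-moment machinery you invoke is the right tool, but it must be carried out for the killed BRW (particles absorbed upon crossing the barrier), which is exactly what \cite{BigLubShwWei91} and \cite{AidJaf11} do -- so the ingredient is correct, but the condition you write down is not the one the argument proves. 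Finally, a small caution on the transfer step: $\{N_n^{\sss\mathcal{F}}\ge K\}$ is not literally a local event (it counts all vertices, not only those within a fixed radius of the source); both you and the paper take the passage to the tree a bit for granted, but since the definition of weak survival is an iterated $\liminf$ over $K$ and the argument only needs a lower bound on the surviving set within bounded radius, this can be made rigorous by approximating from below by local events.
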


\begin{remark}[Scenario not covered by Theorem \ref{thm-weak-CM}]
\label{rem-pathology}
{\rm We note that the only scenario not covered by Theorem \ref{thm-weak-CM} is the pathological case when $\E[L^{\sss \mathcal{R}}] \le \E[L^{\sss \mathcal{F}}]$, there exists $s > 0$ such that $\psi(s) < \infty$, but the equation $\psi'(h) = 0$ does not have a solution. Clearly, $\psi(0)=1$, $\psi'(0)=\E[L^{\sss \mathcal{F}}]-\E[L^{\sss \mathcal{R}}]\leq 0$ and $\psi(s)$ is convex, so $h$ exists for vast majority of distributions. This poses a well-known difficulty in studying passage times for random walks, and very few results are known when $h$ does not exist. As we rely on asymptotics of random-walks passage times in our analysis, we exclude this scenario.}\hfill$\ensymboldefinition$
\end{remark}

We prove Theorem \ref{thm-weak-CM} in Section \ref{sec-weak-CM}, by using that the configuration model converges locally weakly to a branching process. This implies that the fake news dynamics can be related to that on a branching process, a setting that we study in detail in Section \ref{sec-tree}.

\begin{theorem}[Strong survival]
\label{thm-strong-CM}
Consider $\CMnd$ where the degree distribution satisfies Conditions \ref{cond-degrees-regcond}(a)-(c). Assume further that
	\eqn{
	\label{cond-D2logD}
	\expec[D_n^2 \log(D_n)]\rightarrow \expec[D^2\log(D)]<\infty.
	}
Let $\lambda^{\sss \mathcal{R}}$ and $\lambda^{\sss \mathcal{F}}$ denote the Malthusian parameters of first-passage percolation with offspring distribution $D^\star-1$ and lifetimes $L^{\sss \mathcal{R}}$ and $L^{\sss \mathcal{F}}$, respectively. If $\lambda^{\sss \mathcal{R}}<\lambda^{\sss \mathcal{F}}$, then fake news survives in the strong sense. If, on the other hand, $\lambda^{\sss \mathcal{R}}>\lambda^{\sss \mathcal{F}}$, then fake news does not survive in the strong sense.
\end{theorem}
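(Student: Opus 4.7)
The plan is to reduce strong survival to a timescale comparison driven by the Malthusian parameters, using the quantitative first-passage percolation (FPP) asymptotics on the configuration model established in \cite{BhaHofHoo17}. Recall that for a Crump--Mode--Jagers branching process with offspring distribution $D^\star-1$ and i.i.d.\ edge weights with distribution $L$, the Malthusian parameter $\lambda$ is the unique positive solution of $\nu\,\E[\e^{-\lambda L}]=1$, and the number of vertices within FPP distance $t$ grows like $W\e^{\lambda t}$ with $W>0$ on survival. The moment assumption $\E[D_n^2\log D_n]\to\E[D^2\log D]<\infty$ is precisely the Kesten--Stigum $L\log L$ condition for the associated branching random walk, guaranteeing uniform integrability of the Malthusian martingale so that the asymptotic $W\e^{\lambda t}$ is sharp.

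First I would import from \cite{BhaHofHoo17} the fact that, conditionally on the source lying in the giant component, the random time $\tau_k^{\sss\mathcal{F}}$ at which the pure fake-news FPP ball first reaches size $k$ (for $1\ll k\le \eta n$) satisfies
\begin{equation*}
\tau_k^{\sss\mathcal{F}} = \frac{\log k}{\lambda^{\sss\mathcal{F}}} + O_{\mathbb{P}}(1),
\end{equation*}
and analogously $\tau_k^{\sss\mathcal{R}} = \frac{\log k}{\lambda^{\sss\mathcal{R}}} + O_{\mathbb{P}}(1)$ for correct news, uniformly over the relevant range. These statements arise from coupling the FPP exploration to the corresponding branching random walk during the early phase and then extending the asymptotics up to a constant fraction of the vertex set via the local-to-global arguments in that paper.

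For the direction $\lambda^{\sss\mathcal{F}}>\lambda^{\sss\mathcal{R}}$: at time $t^{*}=\tau_{\eta n}^{\sss\mathcal{F}}= \frac{\log n}{\lambda^{\sss\mathcal{F}}}+O_{\mathbb{P}}(1)$ the fake-news epidemic has reached $\eta n$ vertices, while the correct-news ball at that same time has size of order $\e^{\lambda^{\sss\mathcal{R}} t^{*}}=n^{\lambda^{\sss\mathcal{R}}/\lambda^{\sss\mathcal{F}}+o_{\mathbb{P}}(1)}=o(n)$ since the ratio is strictly less than $1$. Hence at most $o(n)$ of the fake-news vertices can have been caught by correct news first, which yields $N^{\sss\mathcal{F}}_n\geq(\eta-o(1))n$ with uniformly positive probability (that probability coming from survival of both spreading processes, which has a positive limit under Condition \ref{cond-degrees-regcond}). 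Symmetrically, when $\lambda^{\sss\mathcal{F}}<\lambda^{\sss\mathcal{R}}$, I would freeze the dynamics at $t^{**}=\tau^{\sss\mathcal{R}}_{n/2}$; there the correct-news ball has taken over half of the giant, while the fake-news ball has size $n^{\lambda^{\sss\mathcal{F}}/\lambda^{\sss\mathcal{R}}+o(1)}=o(n)$, forcing $N^{\sss\mathcal{F}}_n=o_{\mathbb{P}}(n)$.

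The main obstacle is that competition is \emph{not} the same as running two independent FPPs: once a vertex receives correct news it can no longer forward fake news, so the true fake-news region may be strictly smaller than $\{v:T^{\sss\mathcal{F}}(v)<T^{\sss\mathcal{R}}(v)\}$, since fake news needs a path all of whose intermediate vertices receive fake news first. I expect this to be resolved by a two-scale coupling: in the early regime $t=o(\log n/\lambda^{\sss\mathcal{F}})$ both balls contain only $n^{o(1)}$ vertices and the branching-process coupling is essentially exact, so the set of obstructing (already-corrected) vertices inside the fake-news tree is negligible; in the macroscopic regime $t\asymp \log n/\lambda^{\sss\mathcal{F}}$ the fake-news cluster already has size $\Theta(n^{1-\epsilon})$ and the mean-field estimates of \cite{BhaHofHoo17} show that the remaining growth is captured by standard FPP asymptotics with $O(1)$ multiplicative error. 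Bridging these two regimes, and checking that the $O_{\mathbb{P}}(1)$ time error translates into an $O(1)$ multiplicative ball-size error uniformly on the survival event, is the technical heart of the argument; the $\E[D^2\log D]$ assumption is exactly what is required for this uniformity.
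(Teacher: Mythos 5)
Your argument for the \emph{non}-survival direction ($\lambda^{\sss \mathcal{R}}>\lambda^{\sss \mathcal{F}}$) is essentially the paper's: a first-moment bound $\tfrac1n\E[N_n^{\sss\mathcal{F}}]\le\P(M_n^{\sss\mathcal{F}}<M_n^{\sss\mathcal{R}})$ combined with the $\log n/\lambda$ centering of \cite{BhaHofHoo17}, and it is correct.

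For the survival direction, however, there is a genuine gap at exactly the spot you flag but do not close. The counting step ``at most $o(n)$ of the fake-news vertices were caught by correct news first, hence $N_n^{\sss\mathcal{F}}\ge(\eta-o(1))n$'' is false: on a locally tree-like graph, a set of $o(n)$ corrected vertices placed near the source can sever $\Theta(n)$ potential fake-news paths, so small cardinality of the correct-news ball does \emph{not} control the amount of blocking. Your proposed fix --- that in the early regime the obstructing set inside the fake-news tree is ``negligible'' --- is also incorrect: already at bounded graph distance from the source there is a strictly positive probability that a child of the source hears correct news first and blocks its whole subtree, so the fraction of paths blocked at bounded time is bounded \emph{away} from zero, not negligible. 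Indeed, the correct limiting constant is not $\eta-o(1)$ but some $p^\star\in(0,1)$, and identifying it is the heart of the proof.

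The key idea you are missing is that the relevant comparison is \emph{not} between ball sizes (or plain means $\E[L^{\sss\mathcal{R}}]$ vs.\ $\E[L^{\sss\mathcal{F}}]$, which is the tree criterion of Theorem \ref{thm:strong_tree}), but between the means of the traversal times along the \emph{typical} smallest-weight path, which are governed by the fake-news stable-age distribution
\[
\overline{F}^{\sss{\mathcal F}}(x)=\nu\int_0^x \e^{-\lambda^{\sss{\mathcal F}} y}\,F^{\sss{\mathcal F}}(dy).
\]
Under this reweighting, Lemma \ref{lem-sad-correct} shows $\lambda^{\sss\mathcal{F}}>\lambda^{\sss\mathcal{R}}$ implies $\E[\bar L^{\sss\mathcal{R}}]>\overline{\nu}^{\sss\mathcal{F}}$, so the random walk $\bar S_k^{\sss\mathcal{R}}-\bar S_k^{\sss\mathcal{F}}$ has positive drift and stays nonnegative with probability $p^\star>0$; this is exactly why the typical fake-news geodesic is \emph{not} overtaken. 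The paper then needs a nontrivial combinatorial argument on top of this: it separates the ways fake news can be blocked into a path-blocking (correct news overtakes along the optimal fake-news path) and early/late loop-blockings (correct news reaches the path through a side route), shows the loop-blockings vanish because the correct-news smallest-weight graph has size $o_{\P}(\sqrt n)$ while the two fake-news exploration trees have $\Theta_{\P}(\sqrt n)$ active half-edges, and handles the path-blocking via the Poisson-process structure of collision edges in \cite{BhaHofHoo17} with half-edges tagged good/bad according to whether their ancestral path was blocked (Lemmas \ref{lem:branching_process_survival}, \ref{lem-early-path-blocking-positive-prob}, \ref{lem-late-path-blocking-unlikely}). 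Neither the stable-age comparison nor the path-/loop-blocking dichotomy appears in your proposal, and without them the argument does not go through.
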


\begin{remark}[Malthusian parameters]
{\rm We recall that the Malthusian parameters $\lambda^{\sss \mathcal{R}}$ and $\lambda^{\sss \mathcal{F}}$ are the unique positive solutions to the equations
$$
\varphi^{\sss \mathcal{R}}(-s) = \expec[\e^{-sL^{\sss \mathcal{R}}}] = \frac{1}{\nu}
\qquad
\text{and}
\qquad
\varphi^{\sss \mathcal{F}}(-s) = \expec[\e^{-sL^{\sss \mathcal{F}}}] = \frac{1}{\nu},
$$
respectively.}\hfill$\ensymboldefinition$
\end{remark}

\begin{remark}[Relation Theorems \ref{thm-weak-CM} and \ref{thm-strong-CM}]
\label{rem-weak-vs-strong}
{\rm We note that as strong survival implies weak survival, it should hold that $\rho > 1/\nu$ when $\lambda^{\sss \mathcal{R}}<\lambda^{\sss \mathcal{F}}$ and the assumptions of case (ii) of Theorem \ref{thm-weak-CM} hold. We leave the proof of this to a later stage (see Remark \ref{rem-weak-vs-strong-1}) when we introduce the necessary notation and prove some useful results, and now only give the proof in a particular case when $L^{\sss \mathcal{F}}$ and $L^{\sss \mathcal{R}}$ are negatively associated (including the case when they are independent). First, it is clear that it is sufficient to show that $h < \lambda^{\sss \mathcal{F}}$ as in that case clearly
	\eqn{
	\label{strong-weak-2}
	\rho = \psi(h) 
	 \ge \varphi^{\sss \mathcal{F}}(-h) > \varphi^{\sss \mathcal{F}}(-\lambda^{\sss \mathcal{F}}) = 1/\nu.
	}
This condition holds when $L^{\sss \mathcal{F}}$ and $L^{\sss \mathcal{R}}$ are negatively associated, as then
	\eqn{
	\label{strong-weak-1}
	\psi(\lambda^{\sss \mathcal{F}}) 
	\ge \varphi^{\sss \mathcal{R}}(\lambda^{\sss \mathcal{F}}) \varphi^{\sss \mathcal{F}}(-\lambda^{\sss \mathcal{F}}) 
	> \varphi^{\sss \mathcal{R}}(\lambda^{\sss \mathcal{R}})  \varphi^{\sss \mathcal{F}}(-\lambda^{\sss \mathcal{F}}) 
	= \frac{1}{\nu} \varphi^{\sss \mathcal{R}}(\lambda^{\sss \mathcal{R}}) 
	\ge \frac{1}{\nu} \frac{1}{\varphi^{\sss \mathcal{R}}(-\lambda^{\sss \mathcal{R}})} 
	= 1,
	}
where the last inequality is due to the convexity of the function $1/x$. As $\psi(0)=1$, the above implies that $h < \lambda^{\sss \mathcal{F}}$.
}\hfill$\ensymboldefinition$
\end{remark}

In the proof of Theorem \ref{thm-strong-CM}, we heavily rely on the asymptotics of first-passage percolation on the CM, as studied in \cite{BhaHofHoo17}. It is there that the condition \eqref{cond-D2logD} is crucially used. In terms of the branching process local limit of the configuration model, \eqref{cond-D2logD} is equivalent to the $X\log{X}$ condition that is needed for the Kesten-Stigum theorem to apply (see e.g., \cite{JagNer84, LyoPemPer95}). The strong survival results are proved in Section \ref{sec-strong-CM}, while the results on non-strong survival are proved in Section \ref{sec-no-strong-CM}. Interestingly, the strong survival results do not relate easily to strong survival on a truncation of an infinite branching process tree:

\begin{remark}[Relation Theorem \ref{thm-strong-CM} and strong survival on a tree]
{\rm Due to the close relation between the configuration model and branching processes, one might suspect that strong survival occurs \emph{precisely} when strong survival occurs from the root of a finite number of generations of a discrete branching process. Such a statement is true for weak survival, see Theorem \ref{thm-weak-CM}, and compare this to the corresponding result for branching processes in Theorem \ref{thm:main_tree} below. For strong survival, however, this analogy is surprisingly {\em false}. Indeed, Theorem \ref{thm:strong_tree} below shows that, conditionally on survival, fake news strongly survives on a finite number of generations of a branching process if and only if $\E(L^{\sss \mathcal{R}}) > \E(L^{\sss \mathcal{F}})$. This can be proved rather easily by a relatively simple random walk argument. The fact that we need the more involved condition on the Malthusian parameters in Theorem \ref{thm-strong-CM} is due to the fact that a {\em typical} vertex that is reached by fake news, when $\lambda^{\sss \mathcal{R}}<\lambda^{\sss \mathcal{F}}$, is reached through a very long path, and not a path that stays close to the root, as is the main contribution for branching processes in Theorem \ref{thm:strong_tree}. Thus, strong survival is highly non-local, contrary to e.g., the size of the giant (see e.g., \cite{Hofs21b}).
}\hfill$\ensymboldefinition$
\end{remark}

We continue by studying the case where the continuous-time branching process approximation of the random graph is {\em explosive}, which may occur in the setting where the degrees have infinite variance. This infinite-variance-degree setting has been investigated vigorously in the literature \cite{BarHofKom15, BarHofKom17, BhaHofHoo09b, DeiHof16, JorKom20}:

\begin{theorem}[Strong survival: explosive setting]
\label{thm-strong-CM-explos}
Consider $\CMnD$ where the degrees are i.i.d.\ random variables with $\prob(D\geq 2)=1$ and, as $k\rightarrow \infty$,
	\eqn{
	\label{def-inf-var}
	\prob(D> k) =(1+o(1)) \frac{c}{k^{\tau-1}}.
	}
Assume that the continuous-time branching process with offspring distribution $D^\star-1$ and edge weight $L^{\sss\mathcal{F}}$ is explosive. Then the fake news survives in the strong sense.
\end{theorem}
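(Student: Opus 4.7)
The plan is to reduce strong survival to a mean-field statement about whether fake news beats correct news at a uniformly random target, and then to apply a reverse Markov step. Let $U$ denote the source and $V$ a uniformly random vertex of $\CMnD$; write $T_n^{\sss \mathcal{F}}$ and $T_n^{\sss \mathcal{R}}$ for the fake- and correct-news first-passage times from $U$ to $V$. By vertex-exchangeability, $\expec[N^{\sss \mathcal{F}}_n]/n$ equals $\prob(T_n^{\sss \mathcal{F}} < T_n^{\sss \mathcal{R}})$ up to a $1/n$ correction, so setting $q = \liminf_{n\to\infty}\prob(T_n^{\sss \mathcal{F}} < T_n^{\sss \mathcal{R}})$ and using that $N^{\sss \mathcal{F}}_n/n \in [0,1]$, the reverse Markov inequality yields $\prob(N^{\sss \mathcal{F}}_n \ge qn/2) \ge q/2 - o(1)$. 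This is exactly strong survival (with $\eta = q/2$) as soon as $q>0$, so the whole proof reduces to establishing $q > 0$.

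To produce $q > 0$, I would invoke the first-passage percolation asymptotics for the explosive infinite-variance CM from \cite{BhaHofHoo17} and its precursors: for any lifetime $L$ whose associated offspring-$D^\star-1$ continuous-time branching process is explosive, $T_n(U,V) \convd V_s + V_t$, where $V_s$ and $V_t$ are two independent copies of the branching-process explosion time, one from each endpoint. Applied to $L^{\sss \mathcal{F}}$ this yields $T_n^{\sss \mathcal{F}} \convd V_s^{\sss \mathcal{F}} + V_t^{\sss \mathcal{F}}$, with both summands finite almost surely. I would then split into cases. If the correct-news branching process is not explosive, every fixed-depth correct-news exploration accumulates a diverging weight while the hopcount between $U$ and $V$ grows, so $T_n^{\sss \mathcal{R}} \to \infty$ in probability and $q = 1$. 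If the correct-news branching process is also explosive, I would couple the fake- and correct-news branching processes on the common local-limit tree via paired edge labels $(L^{\sss \mathcal{F}}_e, L^{\sss \mathcal{R}}_e)$ and aim to show $\prob(V_s^{\sss \mathcal{F}} + V_t^{\sss \mathcal{F}} < V_s^{\sss \mathcal{R}} + V_t^{\sss \mathcal{R}})>0$. The tool here is the feasibility assumption \eqref{eq:feasibility}: truncate the fastest-expanding ``explosion backbone'' of each tree at a finite depth $k$, pick the positive-probability configuration in which $L^{\sss \mathcal{F}}_e < L^{\sss \mathcal{R}}_e$ on every backbone edge up to depth $k$, and control the post-$k$ tails by choosing $k$ large so that both remaining explosion tails are arbitrarily small with arbitrarily high probability.

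The main obstacle is making the joint fp-percolation convergence rigorous. The proofs in \cite{BhaHofHoo17} grow shortest-weight graphs around $U$ and $V$ until they meet at a high-degree hub; to handle the pair $(T_n^{\sss \mathcal{F}}, T_n^{\sss \mathcal{R}})$ one must carry two such SWGs simultaneously on the same random graph with correlated edge weights, control their possibly distinct hubs, and verify joint distributional convergence of the pair of explosion-time sums on the local-limit tree. The most delicate point is turning the purely edgewise feasibility $\prob(L^{\sss \mathcal{F}} < L^{\sss \mathcal{R}}) > 0$ into a strict inequality between explosion-time sums despite potentially strong positive correlations between $L^{\sss \mathcal{F}}$ and $L^{\sss \mathcal{R}}$; this must be accomplished on the finitely many backbone edges whose weights dominate the explosion time, which is why the truncation at depth $k$ plus tail control is the essential mechanism.
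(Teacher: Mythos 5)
Your reduction step contains a genuine gap. By vertex exchangeability, $\expec[N^{\sss \mathcal{F}}_n]/n$ equals the probability that a uniformly random vertex $V$ \emph{actually receives fake news before correct news in the competition model}. This is \emph{not} the same as $\prob(T_n^{\sss \mathcal{F}} < T_n^{\sss \mathcal{R}})$, where $T_n^{\sss \mathcal{F}}, T_n^{\sss \mathcal{R}}$ are the raw (non-competing) FPP distances. Correct news is pure FPP and reaches $V$ at time exactly $T_n^{\sss \mathcal{R}}$, but fake news is \emph{blocked}: it is killed at any intermediate vertex that correct news reaches first. Consequently the time at which the competing fake news reaches $V$ is $\ge T_n^{\sss\mathcal{F}}$, possibly $+\infty$, and the event $\{V$ gets fake news first$\}$ is a strict subset of $\{T_n^{\sss\mathcal{F}} < T_n^{\sss\mathcal{R}}\}$. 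So one only has $\expec[N^{\sss \mathcal{F}}_n]/n \le \prob(T_n^{\sss\mathcal{F}} < T_n^{\sss\mathcal{R}})$, which is the bound the paper uses for the \emph{non-survival} direction (Section~\ref{sec-no-strong-CM}); it does not give a lower bound for survival. Your reverse-Markov step is fine, but it is applied to a quantity ($q$) that does not bound $\expec[N^{\sss\mathcal{F}}_n]/n$ from below. Everything after that — the joint distributional limit of $(T_n^{\sss\mathcal{F}},T_n^{\sss\mathcal{R}})$, the backbone truncation — attacks the wrong target and still leaves the blocking phenomenon unaddressed.

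The paper's argument is substantially simpler and sidesteps blocking entirely. By \eqref{eq:feasibility} there is $\vep>0$ with $\P(L^{\sss \mathcal{F}} < L^{\sss \mathcal{R}}-\vep)>0$. With positive probability, (a) all edges incident to the source $\Ver$ satisfy $L^{\sss\mathcal{F}}< L^{\sss\mathcal{R}}-\vep$, so the correct news has not left $\Ver$ before time (roughly) $\vep$; and (b) the fake-news explosion time $V^{\sss\mathcal{F}}$ is smaller than $\vep/2$ (it has $0$ in the support). On that event the correct news has found \emph{no} vertices by the time the fake news explodes, so no blocking whatsoever can occur before $T_{a_n}^{\sss\mathcal{F}}+\vep/2$. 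The epidemic-curve result (Proposition~\ref{prop-degree-time-FPP}) then says that at time $T_{a_n}^{\sss\mathcal{F}}+\vep/2$, the (unblocked) fake news has already reached a positive proportion of vertices, conditionally with probability tending to $1$. This gives $\liminf_n \prob(N^{\sss\mathcal{F}}_n\ge\eta n)>0$ directly. The essential ingredient you are missing is precisely this ``correct news never starts'' positive-probability event, together with the instantaneity of coverage in the explosive regime, which together make blocking a non-issue; a strict inequality between the non-competing FPP times does not.
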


In the proof of Theorem \ref{thm-strong-CM-explos} in Section \ref{sec-strong-CM-explos}, we heavily rely on the asymptotics of first-passage percolation on the CM in the explosive setting, as studied in \cite{BarHofKom15, BarHofKom17, BhaHofHoo09b, DeiHof16}. In particular, we prove an {\em epidemic curve} result, in Proposition \ref{prop-degree-time-FPP} below, in which we show that (a) with positive probability, the fake news reaches the hubs before the correct news, and (b) almost instantly after the fake news reaches the hubs, it reaches a positive proportion of the vertices, effectively blocking the correct news from progressing.

\subsection{Discussion and open problems}
\label{sec-disc}
In this section, we discuss our results and state open problems. 
\medskip



\paragraph{\bf Equal Malthusian parameters.} Theorem \ref{thm-strong-CM} leaves open the interesting case where the Malthusian parameters are equal, i.e., when $\lambda^{\sss \mathcal{R}}=\lambda^{\sss \mathcal{F}}$. There, we believe that lower order effects can play a crucial role. Indeed, the proof relies on finite $n$ approximations $\lambda^{\sss \mathcal{R}}_n$ and $\lambda^{\sss \mathcal{F}}_n$ of $\lambda^{\sss \mathcal{R}}$ and $\lambda^{\sss \mathcal{F}}$, and even when their limits $\lambda^{\sss \mathcal{R}}$ and $\lambda^{\sss \mathcal{F}}$ agree, their prelimits can still be far enough apart to make the system strongly survive. However, for example for exponential edge weights with equal parameter, it is not clear what will happen, since in this case even $\lambda^{\sss \mathcal{R}}_n=\lambda^{\sss \mathcal{F}}_n$. This is a highly interesting topic for future research.
\medskip

\paragraph{\bf Several and distinct starting points.} We have assumed the simplest possible setting, where the correct and fake news have the same source. One may wonder how the results change when they start from different locations, or even when there are several starting points for the fake news and/or the correct news. This problem is closely related to competition models (see, e.g., \cite{BarHofKom15, DeiHof16, HofKom15}).

\medskip

\paragraph{\bf Scale of the fake news set in weak, but not strong, survival.} Due to Remark \ref{rem-weak-vs-strong}, Theorems \ref{thm-weak-CM} and \ref{thm-strong-CM} leave a gap where the processes are weakly surviving, but not strongly surviving. In this case, it would be of interest to investigate the scaling of the number of vertices that hear the fake news. Some weak bounds may be extracted from the proof in Section  \ref{sec-no-strong-CM}, which indicates that the fake news may be at most $n^{\lambda^{\sss \mathcal{F}}/\lambda^{\sss \mathcal{R}}+o(1)}$ intermediate surviving. We expect that this is sharp. We currently miss the appropriate and interesting lower bound.
\medskip

\paragraph{\bf Interpolation competition and fake news.} Consider an extension of the model presented here, where correct and fake news spread to unoccupied vertices as in first-passage percolation. When a vertex is occupied by correct news, it will remain to be so forever, and continue to spread correct news. When a vertex is occupied by fake news, (say) the first time correct news attempts to occupy it, it will succeed with probability $p$, and fail with probability $1-p$. The model with $p=0$ is the classic competition model, where the vertices after being occupied never change type. Then model with $p=1$ is our model of fake news spread, since a vertex will simply {\em always} accept correct news, so that correct news spreads like first-passage percolation, while fake news gets replaced by correct news. The model with $p\in(0,1)$ thus combines competition with fake news spread, and might be more realistic for fake news or misinformation spreading than ours.

\medskip
\paragraph{\bf Other spreading mechanisms.} Extending the above discussion, we have discussed one possible way how the spreading of fake and correct news can take place. However, many other possible spreading processes could be invented that model the spread, or adaption, of fake news more realistically. We believe that the applied probability community can play an important role in this societally central problem, both in terms of proposing relevant models, as well as studying the behavior in these models.

\medskip
\paragraph{\bf Models with community structure.} The configuration model is {\em locally tree-like}, meaning that typical neighborhoods look like trees. In particular, such models do not display a community structure. Communities, and the fact that people tend to communicate primarily with people having similar opinions, are claimed to be one of the main reasons why fake news can stick around for such an extended period in so-called echo chambers. It would thus be of interest to study our fake news model on a graph with community structures, such as the hierarchical configuration model \cite{HofLeeSte17}. In particular, Stegehuis et al.\ \cite{SteHofLee16a} show that such a model is much better at describing information diffusion on real-world networks than the configuration model itself. 

\section{Tree model}
\label{sec-tree}

We start by considering the model of competition on a tree. This will help us understand the key characteristics of the model, as well as provide a basis for the analysis on a random graph. Note that many random graph models converge in the local sense to branching processes (see e.g., \cite[Chapters 2-5]{Hofs18} for a detailed discussion of local convergence, as well as the convergence proof for many random graph models).

Let us denote the neighbours of the root as $\root 1,\ldots,\root C_{\root}$, where $C_{\root}$ is the number of children of the root, and we refer to these vertices as the first generation. Continuing this, for a vertex $\root i_1\cdots i_{l-1}$, denote its children (in the $l$th generation) as $\root i_1 \cdots i_{l-1} 1,\ldots,\root i_1 \cdots i_{l-1} C_{\root i_1\cdots i_{l-1}}$. In general, denote the number of offspring of a vertex $w$ by $C_w$. We assume that these random variables are i.i.d.\ over the set of vertices and also independent of traversal times. We denote $\nu = \E(C_w)$ and assume $\nu > 1$ so that there is a positive probability for the branching process to survive. Note that we do not assume that $\nu < \infty$. All probabilities and expectations are to be understood as conditional on this survival. We assume that both epidemics start at the root, and correct news either starts after a positive fixed delay $d$, or without a delay and with the fake news not killed at the root (recall Definition \ref{def-start-delay}).

We adapt the definitions in Definitions \ref{def-weak-surv}, \ref{def-strong-surv} and \ref{def-intermediate-surv}, by thinking of them as applying to the graph of the first $k$ generations  and with the starting point being the root of the branching process tree. Denote the numbers of vertices and surviving vertices (those reached by fake news before correct news) in generation $k$ by $Z_k$ and $Z^{\sss \mathcal{F}}_k$, respectively.

\begin{definition}[Weak survival]
\label{def-weak-surv-tree}
{\rm We say that the fake news \emph{survives in the weak sense} when}
	\eqn{
	\liminf_{K\rightarrow \infty}\liminf_{k\rightarrow \infty}\prob(Z^{\sss \mathcal{F}}_k\geq K)>0.
	}
\hfill$\ensymboldefinition$
\end{definition}

\begin{definition}[Strong survival]
\label{def-strong-surv-tree}
{\rm We say that the fake news {\em survives in the strong sense} when}
	\eqn{
	\liminf_{\eta\searrow 0}\liminf_{k\rightarrow \infty}\prob\left(\frac{Z^{\sss \mathcal{F}}_k}{Z_k}\geq \eta \right)>0.
	}
\hfill$\ensymboldefinition$
\end{definition}

\begin{definition}[Intermediate survival]
\label{def-intermediate-surv-tree}
{\rm We say that the fake news lower {\em survives in the $g_k$-intermediate sense} for some $g_k\rightarrow 0$ when there exists a constant $C>1$ such that
	\eqn{
	\liminf_{k\rightarrow \infty}\prob\left(\frac{Z^{\sss \mathcal{F}}_k}{Z_k} \geq g_k/C\right)>0.
	}
We say that the fake news upper {\em dies in the $g_k$-intermediate sense} for some $g_k\rightarrow 0$ when $(Z^{\sss \mathcal{F}}_k/g_k Z_k)_{k\geq 1}$ is a tight sequence of random variables.
We say that the fake news {\em survives in the $g_k$-intermediate sense} if it lower survives and upper dies in the $g_k$-intermediate sense.}\hfill$\ensymboldefinition$
\end{definition}

For every tree vertex $w=\root i_1\cdots i_l$ we denote by $L^{\sss \mathcal{R}}_w$ and $L^{\sss \mathcal{F}}_w$, respectively, the times it takes for fake and correct news, respectively, to reach this vertex from its parent. We assume that random variables $(L^{\sss \mathcal{R}}_w -L^{\sss \mathcal{F}}_w)_w$ are i.i.d.\ and satisfy that $\E(L^{\sss \mathcal{R}} - L^{\sss \mathcal{F}}) < \infty$, where $L^{\sss \mathcal{R}}$ and $L^{\sss \mathcal{F}}$ are typical random variables. We emphasize again that we do not assume any dependence structure between $L^{\sss \mathcal{R}}_w$ and $L^{\sss \mathcal{F}}_w$ for the same tree vertex $w$.

This section is organized as follows. In Section \ref{sec-BRW}, we reduce the problem of fake news on a branching process tree to a branching random walk problem, which we then use to determine that strong survival can only occur when $\E(L^{\sss \mathcal{R}}) > \E(L^{\sss \mathcal{F}})$. In Section \ref{sec-weak-tree}, we use this reduction to investigate when weak survival occurs on a branching process tree.

\subsection{Reduction to a branching random walk and strong survival}
\label{sec-BRW}
For each tree vertex $w=\root i_1 \cdots i_l$, let $S_w= \sum_{j=1}^l \big(L^{\sss \mathcal{R}}_{\root i_1 \cdots i_j} - L^{\sss \mathcal{F}}_{\root i_1 \cdots i_j}\big)$, and note that fake news reaches a tree vertex $w=\root i_1 \cdots i_l$ as long as $S_j > -d$ for all $j\in[l]$. We can thus consider our competition model as a ``branching random walk with killing": i.e., the spread of the fake news gets stopped at a vertex $w$ for which the corresponding sum $S_w$ is smaller than $-d$. We can however also simply let both epidemics evolve up to a generation $k$, say, and then look at all the branches and check whether or not there is one where the sums were always above $-d$ for the process with delay, or $S_j > 0$ for all $j\in[l]$ for the process with no delay.
 

Let us define
	$$
	S_0 = 0, \qquad S_k = \sum_{i=1}^k X_i, \qquad n\ge 1,
	$$
where $X_i$ are i.i.d.\ with the distribution of $L^{\sss \mathcal{R}}-L^{\sss \mathcal{F}}$. Thus, $(S_k)_{k \ge 0}$ is a generic random walk representing the delays between correct and fake news on a generic branch of a tree. Let us also define, for any $d \ge 0$,
	$$
	\tau_d = \inf\{k \ge 1\colon S_k \leq -d\},
	$$
with the convention that $\tau_d = \infty$ if $S_k > -d$ for all $k \ge 1$. If the initial delay between fake and correct news is $d$, then $\tau_d$ is the first generation where correct news arrived before fake news along a branching process branch, thus killing fake news on this branch. Now clearly
	\eqn{
	\label{rewrite-indicators}
	Z_k^{\sss \mathcal{F}} = \sum_{i=1}^{Z_k} \indicwo{i},
	}
where every $\indicwo{i}$ is the indicator that vertex $i$ is a surviving vertex, and $\P(\indicwo{i}=1) = \P(\tau_d > k)$. The above immediately implies that
	$$
	\E(Z_k^{\sss \mathcal{F}}\mid Z_k) = Z_k \P(\tau_d > k),
	$$
and thus
	\begin{equation} \label{eq:exp_1}
	\E(Z_k^{\sss \mathcal{F}}) = \E(Z_k) \P(\tau_d > k) = \nu^k \P(\tau_d > k),
	\end{equation}
and also
	\begin{equation} \label{eq:exp_2}
	\E \left(\frac{Z_k^{\sss \mathcal{F}}}{Z_k}\right) = \E\left( \E\Big(\frac{Z_k^{\sss \mathcal{F}}}{Z_k}\mid Z_k\Big)\right) = \P(\tau_d > k).
	\end{equation}

The above results demonstrate that in terms of expectations we can reduce everything to the first-passage time for a random walk, which is relatively well understood. We proceed to exploit these to prove exact results on strong and weak survival. Note, however, that in all cases where asymptotics for $\P(\tau_d > k)$ are known, the above also gives an indication of when intermediate survival holds (see Theorem \ref{thm-inter-tree}). We refer to \cite{DS2013} and references therein for a collection of results on the exact asymptotics for the tail of $\tau_d$.



The following result characterizes strong survival on trees:

\begin{theorem}[Strong survival on a tree]
\label{thm:strong_tree}
Condition on survival. Fake news strongly survives if and only if $\E(L^{\sss \mathcal{R}}) > \E(L^{\sss \mathcal{F}})$.
\end{theorem}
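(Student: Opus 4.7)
The plan is to exploit the reduction in~\eqref{eq:exp_2}. Because the lifetime variables $(L^{\sss \mathcal{R}}_w,L^{\sss \mathcal{F}}_w)_w$ are independent of the offspring variables $(C_w)_w$, conditioning on the $\sigma(C_w)$-measurable survival event leaves the joint law of the lifetimes untouched, so the symmetry argument behind~\eqref{eq:exp_2} upgrades to
\begin{equation*}
\E\left(\frac{Z_k^{\sss \mathcal{F}}}{Z_k}\,\Big|\,\text{survival}\right) \;=\; \P(\tau_d > k).
\end{equation*}
Since $Z_k^{\sss \mathcal{F}}/Z_k\in[0,1]$, both directions of the theorem reduce to the asymptotics of $\P(\tau_d>k)$, which in turn are dictated by the sign of the drift $\E(X_1)=\E(L^{\sss \mathcal{R}})-\E(L^{\sss \mathcal{F}})$ of the random walk $(S_k)$.

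For the ``if'' direction, suppose $\E(L^{\sss \mathcal{R}})>\E(L^{\sss \mathcal{F}})$. By the strong law, $S_k\to\infty$ almost surely, and hence $c:=\P(\tau_d=\infty)>0$ and $\P(\tau_d>k)\searrow c$. I would then invoke the elementary sandwich ``if $Y\in[0,1]$ satisfies $\E Y\ge m$ then $\P(Y\ge m/2)\ge (m/2)/(1-m/2)$'' (obtained by splitting $\E Y$ at the threshold $m/2$ and using $Y\le 1$) with $Y=Z_k^{\sss \mathcal{F}}/Z_k$ conditional on survival, to obtain
\begin{equation*}
\liminf_{k\to\infty}\P\left(\frac{Z_k^{\sss \mathcal{F}}}{Z_k}\ge\frac{c}{2}\,\Big|\,\text{survival}\right) \;\ge\; \frac{c/2}{1-c/2} \;>\; 0,
\end{equation*}
which is strong survival with $\eta=c/2$.

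For the converse, suppose $\E(L^{\sss \mathcal{R}})\le\E(L^{\sss \mathcal{F}})$, i.e.\ $\E(X_1)\le 0$. Condition~\eqref{eq:feasibility} forces $\P(X_1>0)>0$, which together with $\E(X_1)\le 0$ forces $\P(X_1<0)>0$, so $(S_k)$ is non-degenerate. If $\E(X_1)<0$ the SLLN gives $S_k\to-\infty$; if $\E(X_1)=0$, classical random-walk fluctuation theory gives $\liminf_k S_k=-\infty$ almost surely. In either case $\tau_d<\infty$ almost surely, whence $\P(\tau_d>k)\to 0$, and the conditional identity above yields $\E(Z_k^{\sss \mathcal{F}}/Z_k\mid\text{survival})\to 0$. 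Since $Z_k^{\sss \mathcal{F}}/Z_k\in[0,1]$, this $L^1$-convergence implies convergence in probability conditional on survival, so $\P(Z_k^{\sss \mathcal{F}}/Z_k\ge\eta\mid\text{survival})\to 0$ for every $\eta>0$, ruling out strong survival.

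The only subtle point is the conditional identity itself: one must check that $\tau_d$ depends purely on the lifetimes along a generic branch and hence is independent of the survival event, which is measurable with respect to the offspring. Once this is pinned down, the remaining ingredients are all standard (the strong law on one side, random-walk oscillation in the mean-zero case, and a one-line Paley--Zygmund-type bound), and the mean-zero case is the only place where a non-trivial fluctuation result is genuinely invoked.
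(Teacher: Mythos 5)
Your proof is correct and follows the same route as the paper: reduce to the tail of the random-walk passage time via \eqref{eq:exp_2}, invoke the SLLN (and, at $\E(X_1)=0$, classical oscillation) for the asymptotics of $\P(\tau_d>k)$, use the first-moment method for the non-survival direction, and a second-moment/boundedness argument (your elementary threshold bound, the paper's Paley--Zygmund) for the survival direction. The only additions are your explicit note that $\tau_d$ depends only on the lifetimes and is therefore unaffected by conditioning on the offspring-measurable survival event, and the separate treatment of the zero-drift boundary case -- both are worthwhile pieces of bookkeeping that the paper leaves implicit.
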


\proof If $\E(L^{\sss \mathcal{R}}) \le \E(L^{\sss \mathcal{F}})$, then $\P(\tau_d > k) \to 0$ as $k \to \infty$, and strong survival is thus impossible, thanks to the Markov inequality (first-moment method).

Assume now that $\E(L^{\sss \mathcal{R}}) > \E(L^{\sss \mathcal{F}})$. In this case \eqref{eq:exp_2} implies that $\E \left(Z_k^{\sss \mathcal{F}}/Z_k\right) = \P(\tau_d > k)\to p_d > 0$ as $k \to \infty$. As it is clear that $\E \Big(\big(Z_k^{\sss \mathcal{F}}/Z_k\big)^2\Big) \le 1$, the Paley-Zigmund inequality (second-moment method) finishes the proof. 
\qed


\subsection{Weak survival on a tree}
\label{sec-weak-tree}
Since fake news strongly survives when $\E(L^{\sss \mathcal{R}}) > \E(L^{\sss \mathcal{F}})$, we focus on the case $\E(L^{\sss \mathcal{R}}) \le \E(L^{\sss \mathcal{F}})$ here.

Recall the definition
	\begin{equation}
	\psi(s) = \E[\e^{s(L^{\sss \mathcal{R}} - L^{\sss \mathcal{F}})}]. 
	\end{equation}

\begin{theorem}[Weak survival on a tree]
\label{thm:main_tree}
Condition on survival.
\begin{itemize} 
\item[(i)] Assume, in addition, $\psi(s) < \infty$ for some $s>0$ and there exists $0 < h < s$ such that $\psi'(h) = 0$. Denote  $\rho = \psi(h)$. If $\rho > 1/\nu$, then fake news survives in the weak sense. If $\rho \leq 1/\nu$, fake news does not survive in the weak sense.
\item[(ii)] If $\psi(s) = \infty$ for all $s>0$, then fake news survives in the weak sense.
\end{itemize}
\end{theorem}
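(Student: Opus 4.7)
The plan is to translate the problem into survival of a branching random walk (BRW) with killing. To each tree vertex $w$ at depth $|w|$ assign the position $S_w := \sum_{j=1}^{|w|} X_{w_j}$, where $X_e := L^{\sss \mathcal{R}}_e - L^{\sss \mathcal{F}}_e$. Then $Z^{\sss \mathcal{F}}_k$ is exactly the number of generation-$k$ vertices whose whole path has strictly positive partial sums, i.e., the survivors of the BRW started at $0$ with absorption at $(-\infty,0]$. Since $\{Z^{\sss \mathcal{F}}_k \ge 1\}$ is a decreasing sequence of events, weak survival is equivalent to the killed BRW surviving forever with positive probability; and on this survival event $Z^{\sss \mathcal{F}}_k \to \infty$, because once a particle climbs above a sufficiently high level $y$, its living-offspring distribution has mean $\nu\P(X > -y)$, which is $>1$ for $y$ large enough, and one checks that on survival the spine walk must reach arbitrarily high levels. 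The many-to-one identity then gives
\[
\E[Z^{\sss \mathcal{F}}_k] = \nu^k\, \P\bigl(S_j > 0 \text{ for all } j \in [k]\bigr),
\]
so everything hinges on the decay of the random-walk probability $\P(S_j > 0,\,\forall j \le k)$.

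For part (i), let $h$ be the minimizer of $\psi$, so $\psi'(h) = 0$ and $\psi(h) = \rho$. Exponentially tilting by $h$ turns $X$ into a mean-zero step and yields
\[
\P(S_j > 0 \,\forall j \le k) = \rho^k\, \widetilde\E\bigl[\e^{-h S_k}\,\1_{S_j > 0\,\forall j \le k}\bigr],
\]
and classical fluctuation theory for mean-zero walks (Sparre Andersen together with local limit / Iglehart-type meander asymptotics) identifies this tilted expectation as $\Theta(k^{-\alpha})$ for some $\alpha > 0$. When $\rho \le 1/\nu$ this gives $\E[Z^{\sss \mathcal{F}}_k] \to 0$, and Markov's inequality forces $\P(Z^{\sss \mathcal{F}}_k \ge 1) \to 0$, ruling out weak survival. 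When $\rho > 1/\nu$ the first moment diverges exponentially; I would pair this with a Paley-Zygmund step, for which the second moment is estimated by splitting each pair of generation-$k$ vertices by the depth $m$ of their most recent common ancestor, exploiting independence past the split, and applying the tilt-and-fluctuation estimate twice. The dominant contributions come from small $m$ and yield $\E[(Z^{\sss \mathcal{F}}_k)^2] \lesssim (\E[Z^{\sss \mathcal{F}}_k])^2$; Paley-Zygmund then gives a uniform positive lower bound on $\P(Z^{\sss \mathcal{F}}_k \ge K)$, hence weak survival.

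For part (ii), the hypothesis $\psi(s) = \infty$ for every $s > 0$ forces $\P(X > t)$ to decay slower than any exponential. Fixing $C > |\E X|+1$ (finite by assumption) and conditioning on a single big first step $\{X_1 > Ck\}$, the SLLN yields $\P\bigl(\min_{2 \le j \le k}(X_2 + \cdots + X_j) > -Ck + 1\bigr) \to 1$, so
\[
\P(S_j > 0 \,\forall j \le k) \ge (1-o(1))\,\P(X_1 > Ck).
\]
Because the tail is heavier than every exponential, $\nu^k \P(X_1 > Ck) \to \infty$ along some subsequence $k_n$, hence $\E[Z^{\sss \mathcal{F}}_{k_n}] \to \infty$. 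A second-moment bound in the same spirit (most cleanly obtained by first truncating the steps at level $Ck$, running the part-(i) tilt-and-fluctuation machinery on the truncated walk, and handling the big-jump contribution separately) gives $\liminf_n \P(Z^{\sss \mathcal{F}}_{k_n} \ge 1) > 0$; monotonicity of $\{Z^{\sss \mathcal{F}}_k \ge 1\}$ upgrades this to $\liminf_k \P(Z^{\sss \mathcal{F}}_k \ge 1) > 0$, and the growth-on-survival observation from the first paragraph delivers full weak survival.

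The hard part will be the uniform second-moment control in the two divergent regimes. In part (i) this is a standard but fiddly two-spine calculation for supercritical killed BRWs that needs quantitative control of $\P(S_j > 0,\,\forall j \le k,\, S_k \in dy)$ uniformly in $y \ge 0$. In part (ii) the absence of any exponential moment of $X$ is the genuine nuisance: the contributions to $\E[(Z^{\sss \mathcal{F}}_k)^2]$ from pairs whose MRCA sits near generation $k$ must be disentangled carefully so that one does not double-count the rare single-big-jump events that already drive the first-moment lower bound.
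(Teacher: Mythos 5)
Your overall strategy is the paper's strategy: rewrite $Z^{\sss \mathcal{F}}_k$ as a branching random walk with killing at the origin, deduce the many-to-one identity
\[
\E[Z^{\sss \mathcal{F}}_k] = \nu^k\,\P(\tau_d > k),
\]
and then separate the light-tailed case (exponential tilt at the minimiser $h$ of $\psi$, giving $\P(\tau_d>k)\asymp k^{-3/2}\rho^{k}$, which is what your tilting computation correctly produces) from the heavy-tailed case (single-big-jump lower bound combined with the SLLN). The paper does exactly this, quoting the one-big-jump idea from Zwart and then \emph{citing} the literature for the hard part: it treats part (i) for $\nu<\infty$ as equivalent to Biggins--Lubachevsky--Shwartz--Weiss (and A\"{\i}d\'ekon--Jaffuel for $\rho=1/\nu$), and appeals to Kingman and Biggins to convert first-moment divergence into survival in part (ii). You instead propose to re-derive those inputs: a two-spine/MRCA-depth second-moment bound plus fluctuation theory for the tilted mean-zero walk in (i), and a truncate-and-handle-the-big-jump-separately variant in (ii). That is a legitimate route and, if carried out, would make the proof more self-contained than the paper's; but you should be aware that you are signing up to reprove nontrivial results, and the precise control of $\P(S_j>0\ \forall j\le k,\ S_k\in dy)$ uniformly in $y\ge 0$ in (i), and the disentangling of the big-jump contribution in (ii), are exactly where the cited papers do the real work. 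Your remark that ``once a particle is high enough the surviving-offspring count is supercritical, so on survival $Z^{\sss\mathcal{F}}_k\to\infty$'' is the right intuition but is not yet a proof: it is not obvious that the process cannot survive while confined to a bounded window, and this is one of the things the cited references establish.

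The one concrete omission relative to the statement you are proving is $\nu=\infty$. The theorem is stated without the hypothesis $\nu<\infty$, and the paper explicitly singles this case out, handling it by truncating the offspring distribution: choose $M$ with $\E[\min\{C_w,M\}]>1/\rho$, note the truncated tree is a subtree of the original, and use monotonicity of weak survival under adding children. Your proposal never mentions $\nu=\infty$, and your first-moment identity $\E[Z^{\sss\mathcal{F}}_k]=\nu^k\P(\tau_d>k)$ is vacuous when $\nu=\infty$. You should add this (easy) truncation step. A minor positive: in part (ii) you correctly note that $\psi(s)=\infty$ for all $s>0$ only gives $\nu^k\P(X_1>Ck)\to\infty$ along a subsequence, and you patch this with monotonicity of the events $\{Z^{\sss\mathcal{F}}_k\ge 1\}$; the paper's displayed claim $\e^{-\varepsilon k}=o(\P(X_1>k))$ is actually stronger than what the hypothesis delivers, so your version is the more careful one.
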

\medskip

Note that as we do not assume that $\nu < \infty$, statement (i) means that if $\nu = \infty$, then fake news weakly survives, regardless of the value of $\rho$.

Theorem \ref{thm:main_tree} for branching process trees is quite similar to Theorem \ref{thm-weak-CM} for the configuration model. In particular, the proof of Theorem \ref{thm-weak-CM} in Section \ref{sec-weak-CM} will be a combination of local convergence together with the analysis on the tree in Theorem \ref{thm:main_tree}. We note also that, as in the case of Theorem \ref{thm-weak-CM}, Theorem \ref{thm:main_tree} covers all possible distributions, apart from some pathological ones mentioned in Remark \ref{rem-pathology}.

\proof For $\nu < \infty$, part (i) is equivalent to results of \cite{BigLubShwWei91} (see also \cite{AidJaf11} for a comment on the case $\rho=1/\nu$). Earlier work includes that of Kingman \cite{King75} and Biggins \cite{Bigg76}. We note that these results are almost immediate from \eqref{eq:exp_1}, since in the light-tailed case, as $k\rightarrow \infty$,
	\eqn{
	\P(\tau_d > k)= C (1+o(1))k^{-3/2} \rho^{-k},
	}
for some constant $C$.

For $\nu = \infty$, note that if fake news weakly survives on a tree, then it will also weakly survive on a strictly larger tree such that the number of children of each vertex is w.p. $1$ not smaller than that in the original tree. Let, as above, $C_w$ denote the number of children of a vertex $w$. As $\E(C_w) = \infty$, we can find $M$ such that $\E(\min\{C_w,M\}) > 1/\rho$. Fake news then survives on a tree with every vertex $w$ having $\min\{C_w,M\}$ children and hence also on the original tree.

Let us now prove (ii). Denote by $\mu = - \E(L^{\sss \mathcal{R}}  - L^{\sss \mathcal{F}}) \geq 0$ and choose a positive $\delta$. Using ideas from, e.g. \cite[Proposition 3.1]{Zwart01}, we can write
	\begin{align*}
	\P(\tau_d > k) & \ge \P(X_1 > k(\mu + \delta), \min\{S_2,\ldots,S_k\} > 0)\\
	& \ge \P(X_1 > k(\mu + \delta)) \P(\min\{\xi_2, \xi_2+\xi_3,\ldots,\xi_2+\xi_3+\cdots+\xi_k\} > - k(\mu + \delta)) \\
	& \ge (1-\delta) \P(X_1 > k(\mu + \delta))
	\end{align*}
for large enough $k$, due to the strong law of large numbers. As $\psi(s) = \infty$ for all $s>0$
	$$
	\e^{-\varepsilon k} = o(\P(X_1  > k))
	$$
for any $\varepsilon > 0$. Choose $K$ large enough so that
	$$
	\P(L^{\sss \mathcal{R}}  > k) \ge C \e^{-\varepsilon k},
	$$
for a constant $C$ and for all $k \ge K$. Then, assuming $\nu < \infty$, and using \eqref{eq:exp_2},
	$$
	\E(Z_k^{\sss \mathcal{F}}) \ge C (1-\delta) \nu^k \e^{-\varepsilon (\mu+\delta) k} \to \infty
	$$
exponentially fast, as long as $\varepsilon < \frac{\log \nu}{\mu+\delta}$. We can then use the arguments from \cite{King75} and \cite{Bigg76} to finish the proof. It is also easy to use argument similar to those applied in (i) to finish the proof for $\nu = \infty$. \qed

We continue by discussing some interesting examples, and relate our results to the literature:

\begin{remark}[Survival when traversal times have same law]
\label{remark:identical}
{\rm An interesting example is when both news spread independently and according to the same law, i.e., $L^{\sss \mathcal{F}}$ and $L^{\sss \mathcal{R}}$ are independent over all edges and have the same distribution. Denote by $L$ a random variable with that distribution and denote by $\varphi(s)$ the generating function of $L$. In this case,
	$$
	\rho = \inf_{s \ge 0} \psi(s) = \inf_{s \ge 0} \varphi(s) \varphi(-s) = \inf_{s \ge 0} \E[\e^{sL}] \E\left[\frac{1}{\e^{sL}}\right] > 1,
	$$
due to strict convexity of the function $1/x$ and Jensen's inequality. Therefore, $\rho > 1/\nu$ and we have weak survival of the fake news with a positive probability for all light-tailed laws of the transmission times, as long as these laws are the same for the fake and the correct news. This justifies the title ``it is hard to kill fake news''. It is in this case impossible to kill fake news, unless it is done in the first few steps.}\hfill$\ensymboldefinition$
\end{remark}

\begin{remark}[Exponential distributions]
\label{rem-exp}
{\rm Another interesting case is when $L^{\sss \mathcal{R}}$ and $L^{\sss \mathcal{F}}$ are independent and both have an exponential distribution. Let us denote their parameters by $\mu^{\sss \mathcal{R}}$ and $\mu^{\sss \mathcal{F}}$ and assume that $\mu^{\sss \mathcal{R}} > \mu^{\sss \mathcal{F}}$. A simple calculation shows that $\rho = 4 \mu^{\sss \mathcal{R}} \mu^{\sss \mathcal{F}}/(\mu^{\sss \mathcal{R}} + \mu^{\sss \mathcal{F}})^2$, and survival will depend on whether or not this is larger than $1/\nu$.
Note that this recovers the results of \cite{Kord05}.}\hfill$\ensymboldefinition$
\end{remark}

\subsection{Intermediate survival on a tree}

One can generalize results of Theorem \ref{thm:main_tree} and prove results on intermediate survival:

\begin{theorem}[Intermediate survival on a tree]
\label{thm-inter-tree}
Condition on survival and assume that $\E[L^{\sss \mathcal{R}}] \le \E[L^{\sss \mathcal{F}}]$. Define
$$
p_k = \P(\tau_d > k)
$$
with $\tau_d$ defined above. Then fake news upper dies in the $g_k$-intermediate sense for any sequence $g_k$ such that $p_k = o(g_k)$.
\end{theorem}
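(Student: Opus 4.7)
The plan is to derive tightness of $(Z^{\sss \mathcal{F}}_k/(g_k Z_k))_{k \geq 1}$ as a direct consequence of the first-moment identity \eqref{eq:exp_2}. Starting from $\E(Z_k^{\sss \mathcal{F}}/Z_k) = p_k$, Markov's inequality yields, for any $M>0$,
$$
\P\!\left(\frac{Z_k^{\sss \mathcal{F}}}{g_k Z_k} \geq M\right)
= \P\!\left(\frac{Z_k^{\sss \mathcal{F}}}{Z_k} \geq M g_k\right)
\leq \frac{p_k}{M g_k}.
$$
Since by hypothesis $p_k = o(g_k)$, the ratio $p_k/g_k$ tends to zero, so for any fixed $\varepsilon > 0$ and any fixed $M>0$, we can choose $K = K(\varepsilon,M)$ such that the right-hand side is below $\varepsilon$ for all $k \geq K$. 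In particular, choosing $M = 1$ already makes the probability tend to zero along $k$.

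To upgrade this into tightness, the one remaining point is to control the finitely many small values of $k < K$ uniformly. Here the plan is to use the crude bound $Z_k^{\sss \mathcal{F}} \leq Z_k$, which gives $Z_k^{\sss \mathcal{F}}/(g_k Z_k) \leq 1/g_k$; enlarging $M$ beyond $\max_{k < K} 1/g_k$ makes the corresponding probabilities identically zero. Combining the two regimes, for any $\varepsilon > 0$ we obtain $M = M(\varepsilon)$ with $\sup_k \P(Z_k^{\sss \mathcal{F}}/(g_k Z_k) \geq M) < \varepsilon$, which is precisely tightness and hence upper dying in the $g_k$-intermediate sense.

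I do not foresee any substantive obstacle: the argument is a one-line Markov bound powered by \eqref{eq:exp_2}, and the only care needed is bookkeeping for the small-$k$ values, handled by the trivial a.s.\ bound $Z_k^{\sss \mathcal{F}}/Z_k \leq 1$. Notably, no branching-random-walk machinery beyond the identity \eqref{eq:exp_2} is needed, and no moment assumption on $L^{\sss \mathcal{R}} - L^{\sss \mathcal{F}}$ enters beyond what was already used to derive \eqref{eq:exp_2}.
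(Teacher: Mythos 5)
Your argument is exactly the first-moment method the paper invokes: Markov's inequality applied to $\E(Z_k^{\sss \mathcal{F}}/Z_k) = p_k$ from \eqref{eq:exp_2}, combined with $p_k = o(g_k)$, plus the trivial bound $Z_k^{\sss \mathcal{F}}/Z_k \le 1$ for the finitely many initial indices. This matches the paper's (one-line) proof, just spelled out in full.
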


\proof The statement follows directly from \eqref{eq:exp_2} and the first-moment method.
\qed

\section{Proofs for the configuration model}
\label{sec-CM}
In the graph setting we will assume that both epidemics start from the same, randomly chosen, vertex, say, $U$. We again assume that the correct news starts at the same time as the fake news, or that there is a delay, so that the fake news starts earlier. We know that the CM is approximated well by a branching process tree, so that we can prove weak survival, which is a local property, using local convergence techniques. For strong survival, it is not sufficient to consider local convergence, as strong survival is a global notion. However, we also know that the percolation time to another randomly chosen vertex is of order $\log{n}/\lambda^{\sss \mathcal{R}}$ and $\log{n}/\lambda^{\sss \mathcal{F}}$, respectively, for the correct and fake news. We are going to exploit this in our proofs.

This section is organized as follows. We start in Section \ref{sec-weak-CM} by using local convergence arguments and Theorem \ref{thm:main_tree} to prove Theorem  \ref{thm-weak-CM}. In Section \ref{sec-strong-CM}, we prove the strong survival part of Theorem \ref{thm-strong-CM}, which is the the most involved part of our proof. In Section \ref{sec-no-strong-CM}, we prove the no strong survival part of Theorem \ref{thm-strong-CM}. We close in Section \ref{sec-strong-CM-explos} by proving the strong survival for infinite-variance degrees in Theorem \ref{thm-strong-CM-explos}. We focus on the case without delay (recall Definition \ref{def-start-delay}).

\subsection{Weak survival on CM: Proof of Theorem \ref{thm-weak-CM}}
\label{sec-weak-CM}
We use the notion of local convergence (LC). See \cite[Chapter 2]{Hofs18} for an extensive explanation of local convergence, and  \cite[Chapter 4]{Hofs18} for a proof of local convergence in probability for the configuration model. By LC, 
	\eqn{
	\lim_{n\rightarrow \infty}\prob(N^{\sss \mathcal{F}}_n\geq K)=\prob(N^{\sss \mathcal{F}}\geq K),
	}
where $N^{\sss \mathcal{F}}$ is the number of vertices that hear the fake news on the unimodular branching process tree with distribution $(p_k)_{k\geq 1}$, with $p_k=\prob(D=k)$.
In such a branching process, the root has offspring $(p_k)_{k\geq 1}$, while any other vertex has offspring distribution $(p_k^\star)_{k\geq 1}$, given by $p_k^\star=\prob(D^\star-1=k)=(k+1)p_{k+1}/\sum_{l\geq 1} lp_l$.
Under the conditions for weak survival of Theorem \ref{thm-weak-CM}, we have that, in the case (i) by Theorem \ref{thm:strong_tree} and in cases (ii) and (iii) by Theorem \ref{thm:main_tree},
	\eqn{
	\liminf_{K\rightarrow \infty}\prob(N^{\sss \mathcal{F}}\geq K)>0,
	}
as required. On the other hand, when $\varphi^{\sss \mathcal{R}}(s) < \infty$ for some $s>0$ and $\rho \leq 1/\nu$, again by Theorem \ref{thm:main_tree},
the above limit is 0, so that the fake news does not weakly survive. This completes the proof of Theorem \ref{thm-weak-CM}.
\qed

\subsection{Proof of strong survival in Theorem \ref{thm-strong-CM} when $\lambda^{\sss \mathcal{F}}>\lambda^{\sss \mathcal{R}}$}
\label{sec-strong-CM}
In this section, we prove the strong survival statement in Theorem \ref{thm-strong-CM} when $\lambda^{\sss \mathcal{F}}>\lambda^{\sss \mathcal{R}}$. Recall that the number of individuals that are exposed to the fake news is denoted by $N_n^{\sss \mathcal{F}}$. The key result in the current proof is the following proposition:

\begin{proposition}[Positive proportion is exposed to fake news]
\label{prop-exposed-fake-news}
Under the assumptions of Theorem \ref{thm-strong-CM}, when $\lambda^{\sss \mathcal{F}}>\lambda^{\sss \mathcal{R}}$,
	\eqn{
	\label{Nn-expec-LB}
	\liminf_{n\rightarrow \infty} \frac{1}{n}\expec[N_n^{\sss \mathcal{F}}]>0.
	}
\end{proposition}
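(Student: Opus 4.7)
The plan is to reduce the statement to a single-pair first-passage percolation (FPP) comparison via linearity of expectation, and then to lower bound the resulting probability using the detailed FPP asymptotics of \cite{BhaHofHoo17} on the configuration model. By linearity and the fact that the source $U$ is uniform on $[n]$, for an additional uniform vertex $V \in [n]$ independent of $U$,
\begin{equation*}
\expec[N_n^{\sss \mathcal{F}}] \;=\; \sum_{v \in [n]} \prob(v \text{ hears fake news first}) \;=\; n\, \prob(V \text{ hears fake news first}),
\end{equation*}
so it suffices to show $\prob(V \text{ hears fake news first}) \ge c > 0$ uniformly in $n$.

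Let $\mathcal{G}$ be the event that $U$ and $V$ both lie in the giant component of $\CMnd$; then $\prob(\mathcal{G}) \to \zeta^2 > 0$. On $\mathcal{G}$, the FPP asymptotics of \cite{BhaHofHoo17}, valid under Condition~\ref{cond-degrees-regcond} together with the $X\log X$ condition~\eqref{cond-D2logD}, give
\begin{equation*}
T^{\sss \mathcal{F}}_n(U,V) \;=\; \frac{\log n}{\lambda^{\sss \mathcal{F}}} + O_{\prob}(1), \qquad T^{\sss \mathcal{R}}_n(U,V) \;=\; \frac{\log n}{\lambda^{\sss \mathcal{R}}} + O_{\prob}(1).
\end{equation*}
Since $\lambda^{\sss \mathcal{F}} > \lambda^{\sss \mathcal{R}}$, setting $t_n := (\log n)/\lambda^{\sss \mathcal{F}} + K$ for a large constant $K$, we obtain, with probability bounded away from zero, that (a) $T^{\sss \mathcal{F}}_n(U,V) \le t_n < T^{\sss \mathcal{R}}_n(U,V)$; and (b) the correct-news ball $B^{\sss \mathcal{R}}(U, t_n) := \{v : T^{\sss \mathcal{R}}_n(U,v) \le t_n\}$ has size at most $n^{\lambda^{\sss \mathcal{R}}/\lambda^{\sss \mathcal{F}}+o(1)} = n^{1-\delta}$ for some $\delta > 0$, owing to the exponential growth at rate $\lambda^{\sss \mathcal{R}}$ of the correct-news CMJ approximation.

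The main obstacle is that (a) and (b) alone do not yet imply $V$ hears fake news first, because the fake news may be blocked at an intermediate vertex on its shortest path from $U$ to $V$ that has been corrected in the meantime. The key observation is: if the fake shortest path $P = (U = w_0, w_1, \ldots, w_k = V)$ avoids $B^{\sss \mathcal{R}}(U, t_n)$ apart from $w_0 = U$, then for each intermediate $w_i$ we have $T^{\sss \mathcal{R}}_n(U, w_i) > t_n \ge T^{\sss \mathcal{F}}_n(U, V) \ge$ (partial fake length from $U$ to $w_i$), so the fake news propagates unimpeded along $P$ and $V$ hears fake news at time $T^{\sss \mathcal{F}}_n(U,V) < T^{\sss \mathcal{R}}_n(U,V)$. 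To argue that $P$ avoids $B^{\sss \mathcal{R}}(U, t_n)$ with probability bounded away from zero, I plan a sequential exposure: first reveal $B^{\sss \mathcal{R}}(U, t_n)$ and its incident half-edges (removing only an $n^{-\delta}$-fraction of the half-edge set), and then run the fake-news FPP on the residual graph, which is still well-approximated by the original configuration model. Via the backward-exploration coupling used in \cite{BhaHofHoo17}, the fake-news path from $V$ to $U$ traverses $\Theta(\log n)$ vertices whose labels are asymptotically uniform on $[n]$, and so with positive probability lies entirely outside the pre-exposed set $B^{\sss \mathcal{R}}(U, t_n)$.

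The chief technical difficulty lies in making the above disentanglement rigorous: the two FPP processes share the same random graph, and the per-edge weights $L^{\sss \mathcal{F}}, L^{\sss \mathcal{R}}$ may be arbitrarily dependent, so the two explorations cannot simply be run independently. My plan is to exploit the fact that the degree sequence regularity ensures the residual graph after removing an $n^{-\delta}$-fraction of half-edges is itself (conditionally) a configuration model on the degree sequence of the surviving half-edges, to which the CMJ coupling of \cite{BhaHofHoo17} still applies, yielding the required lower bound on the probability that the backward fake-news exploration from $V$ reaches $U$ along a path disjoint from the pre-exposed correct-news ball.
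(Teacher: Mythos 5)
Your reduction to the single-pair probability $\prob(V \text{ hears fake news first})$ is the right first step and matches the paper's opening move. The subpath-optimality argument (if each intermediate vertex $w_i$ on the fake-news shortest path $P$ satisfies $T^{\sss \mathcal{R}}_n(U,w_i) > t_n \ge T^{\sss \mathcal{F}}_n(U,w_i)$, then the fake news transmits unimpeded) is also correct as stated. The fatal gap is in the premise: the fake-news shortest path \emph{cannot} avoid $B^{\sss \mathcal{R}}(U,t_n)$, even with small probability. The early vertices $w_1, w_2, \ldots$ on $P$ are at graph distance $1,2,\ldots$ from $U$, so $T^{\sss \mathcal{R}}_n(U,w_1) \le L^{\sss \mathcal{R}}_{(U,w_1)}$ is $O_{\sss\prob}(1)$, which is far below $t_n \approx \log n / \lambda^{\sss \mathcal{F}}$. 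Thus $w_1, w_2, \ldots$ are essentially surely inside $B^{\sss \mathcal{R}}(U,t_n)$, and your event has vanishing probability. The ``labels are asymptotically uniform'' heuristic for the backward exploration from $V$ is fine for the \emph{second half} of $P$ (near $V$), but breaks down completely for the first half (near $U$), precisely because those vertices are, by construction, the same vertices explored early by the correct-news FPP from $U$. For the same reason, the sequential-exposure plan fails structurally: $U$ and its entire FPP neighborhood out to radius $t_n$ are in the set you propose to pre-expose and remove, so the residual configuration model no longer contains the source from which the fake news must propagate.

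What is actually needed, and what the paper does, is to run the race vertex-by-vertex along the shortest path rather than comparing with the fixed threshold $t_n$. Along the fake-news shortest path, the $L^{\sss \mathcal{F}}$-weights are asymptotically i.i.d.\ from the stable-age distribution $\overline{F}^{\sss \mathcal{F}}$ of \eqref{eq:stable_fake}, and the corresponding $L^{\sss \mathcal{R}}$-weights are distributed as $\bar{L}^{\sss \mathcal{R}}$ of \eqref{eq:stable_real}. A path-blocking near $U$ occurs precisely when the walk $\bar{S}_k^{\sss \mathcal{R}} - \bar{S}_k^{\sss \mathcal{F}}$ drops below zero, and the key mechanism is Lemma~\ref{lem-sad-correct}: $\lambda^{\sss \mathcal{F}} > \lambda^{\sss \mathcal{R}}$ implies $\overline{\nu}^{\sss \mathcal{F}} < \E[\bar{L}^{\sss \mathcal{R}}]$, so this walk has strictly positive drift and therefore stays positive forever with probability $p^\star > 0$. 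Making this rigorous requires the CTBP expectation asymptotics of Lemma~\ref{lem:branching_process_survival} and Corollary~\ref{cor:branching_process_survival}, the good/bad half-edge bookkeeping on top of the Poisson collision-edge limit of \cite[Theorem~3.1]{BhaHofHoo17} (Lemma~\ref{lem-early-path-blocking-positive-prob}), and a separate key-renewal-theorem argument (Lemma~\ref{lem-late-path-blocking-unlikely}) to rule out late blockings. Your proposal, as written, replaces this delicate drift comparison with a crude containment condition that holds with probability tending to zero, so the argument does not close.
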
 
\medskip

Before proving Proposition \ref{prop-exposed-fake-news}, we use it to complete the proof of the strong survival claim in Theorem \ref{thm-strong-CM}:
\medskip

\noindent
{\it Proof of the strong survival claim in Theorem \ref{thm-strong-CM} subject to Proposition \ref{prop-exposed-fake-news}.} Recall that by Definition \ref{def-strong-surv}, we need to prove that there exists $\eta>0$ such that
	\eqn{
	\label{cond-strong-surv}
	\liminf_{n\rightarrow \infty}\prob(N^{\sss \mathcal{F}}_n\geq \eta n)>0.
	}
We argue by contradiction. Assume that \eqref{cond-strong-surv} fails. Then, $N^{\sss \mathcal{F}}_{n_k}/{n_k}\convp 0$ along a subsequence $(n_k)_{k\geq 1}$ such that $n_k\rightarrow \infty$.  
However, by dominated convergence and the fact that $N^{\sss \mathcal{F}}_n/n\leq 1$, it then also follows that $\frac{1}{n_k}\expec[N_{n_k}^{\sss \mathcal{F}}]\rightarrow 0$, which contradicts \eqref{Nn-expec-LB} in Proposition \ref{prop-exposed-fake-news}. We conclude that \eqref{cond-strong-surv} holds, which completes the proof of the strong survival claim in Theorem \ref{thm-strong-CM} subject to \eqref{Nn-expec-LB}.
\qed
\medskip

In the remainder of this section, we complete the proof of Proposition \ref{prop-exposed-fake-news}.

\begin{proof}[Proof of Proposition \ref{prop-exposed-fake-news}] We first write
	\eqn{
	\frac{1}{n}\expec[N_n^{\sss \mathcal{F}}]=\prob(\Ver' \text{ exposed to fake news from source }\Ver),
	}
where we take the expectation w.r.t.\ the random source $\Ver$ of the fake news epidemic, and the random destination $\Ver'$. Our aim is thus to prove that
	\eqn{
	\label{aim-prop-exposed-fake-news}
	\liminf_{n\rightarrow \infty} \prob(\Ver' \text{ exposed to fake news from source }\Ver)>0.
	}
It is here that we crucially rely on the results on first-passage percolation on $\CMnd$ derived in 	\cite{BhaHofHoo17}, under the conditions that the degree distribution in $\CMnd$ satisfies Conditions \ref{cond-degrees-regcond}(a)-(c), and that \eqref{cond-D2logD} holds.
\medskip

As in \cite{BhaHofHoo17}, take $B>0$ large, and we perform the unrestricted fake and correct news epidemics to time $1/(2\lambda_n^{\sss \mathcal{F}})\log{n} +B$ from the uniform source vertex $\Ver$. We do this in a joint way, but we {\em ignore the competition}. Thus, the fake news keeps on spreading from vertices that have also been exposed to the correct news. Hence, in this setting, the set of vertices exposed to the fake news is a subset of the vertices that are actually reached by the fake news in which vertices stop spreading the fake news upon being exposed to the correct news. We therefore call the set of vertices reached by the unrestricted fake news epidemic the {\em potential} fake news epidemic. Note that $\Ver'$ {\em will} eventually be reached by the unrestricted fake news epidemic as soon as $\Ver$ and $\Ver'$ are in the same connected component, which occurs with strictly positive probability under the assumptions of Theorem \ref{thm-strong-CM}. We then grow the {\em backwards} potential fake news epidemic up to time $1/(2\lambda_n^{\sss \mathcal{F}})\log{n} +B$ from the uniform destination vertex $\Ver'$. It is proved in \cite[Theorem 1.2]{BhaHofHoo17} (see also \cite[Proposition 3.2]{BhaHofHoo17}) that, conditionally on $\Ver, \Ver'$ being in the same connected component in $\CMnd$, the two potential fake new epidemics meet with a probability that is arbitrarily close to 1 when $B$ grows large.

The remainder of the proof is to show that $\Ver'$ will with positive probability also be reached by the fake news when $\lambda^{\sss \mathcal{F}}>\lambda^{\sss \mathcal{R}}$. Therefore, we need to argue that, with positive probability, the correct news does not {\em block} the fake news epidemic on its way to $\Ver'$, i.e.,
	\eqn{
	\label{aim-prop-exposed-fake-news-2}
	\liminf_{n\rightarrow \infty} \prob(\text{fake news epidemic from }\Ver \text{ to }\Ver' \text{ is not blocked})>0.
	} 
At the same time, we also grow the correct news epidemic from $\Ver$, and note that this is simply regular first passage percolation, so that the results from \cite{BhaHofHoo17} also apply to it. However, the fact that the correct and fake news traversal times across edges are not necessarily independent makes the dependencies between the various epidemics subtle. We start by setting the scene, and start by collecting some crucial results from \cite{BhaHofHoo17}.
\medskip

By \cite{BhaHofHoo17}, we observe the following:

\begin{itemize}
\item[$\rhd$] The size of the potential fake news epidemic from $\Ver$ grows like $\Theta_{\sss \prob}(1) \e^{B\lambda_n^{\sss \mathcal{F}}}\sqrt{n},$ by \cite[Proposition 2.3(a)]{BhaHofHoo17};

\item[$\rhd$] The size of the potential fake news epidemic from $\Ver'$ grows  like $\Theta_{\sss \prob}(1) \e^{B\lambda_n^{\sss \mathcal{F}}}\sqrt{n},$ again by \cite[Proposition 2.3(a)]{BhaHofHoo17};

\item[$\rhd$] With probability that is close to 1 for large $B$, the smallest-weight path of the potential fake news is formed between the two smallest-weight trees from $\Ver$ and $\Ver'$, by \cite[Proposition 2.3(a), Theorem 3.1 and Proposition 3.2]{BhaHofHoo17};

\item[$\rhd$] The correct news reaches at most $\Theta_{\sss \prob}(1) \e^{B\lambda^{\sss \mathcal{R}}} n^{\lambda_n^{\sss \mathcal{R}}/(2\lambda_n^{\sss \mathcal{F}})}\ll \sqrt{n}$ vertices, and its smallest-weight graph is whp a tree, again by \cite[Proposition 2.3(a)]{BhaHofHoo17}.



\end{itemize}
The combined smallest-weight graphs of fake and correct news can be thus seen as a graph of joint paths, appended by the paths to vertices that are found by the correct news (which jointly form a tree), but not by the fake news, as well as the paths to vertices found by the fake news, but not by the correct news. This last group of paths is not necessarily a tree, since its size is of order $\sqrt{n}$, which is such that the first deviations from a tree start to emerge. While the structure of the subgraph of all vertices found by the fake news is not necessarily a tree, we do know that there are $\Theta_{\sss \prob}(1) \e^{B\lambda^{\sss \mathcal{F}}}\sqrt{n}$ partially explored edges on the boundaries of the smallest-weight graphs from both $\Ver$ and $\Ver'$. 
\smallskip

By the second item above, the two smallest-weight potential fake news graphs from $\Ver$ and $\Ver'$ are connected to form the smallest-weight path between $\Ver$ and $\Ver'$. This occurs through a so-called {\em collision edge} (see \cite[Section 2.2, and the main result of their occurrence in Proposition 2.3 and Theorem 3.1]{BhaHofHoo17}). There are many possible collision edges, and only one of them gives rise to the smallest-weight path (it is one of the first ones, see \cite[Proposition 3.2]{BhaHofHoo17}). Denote this edge by $e=(x,y)$, where $x$ and $y$ are the two half-edges that are paired together to form $e$, and, upon its pairing, $x$ was on the boundary of the smallest-weight graph of $\Ver$, while $y$ was on the boundary of the smallest-weight graph of $\Ver'$.
\smallskip

By the above construction, we note that if the potential fake news is blocked by the correct news, so that it does not reach $\Ver'$, then one of the vertices along the smallest-weight potential fake news paths must be exposed to the correct news, before being able to transmit the fake news. Such a blocking can occur when the correct news overtakes the smallest-weight path of the potential fake news, which we call a {\em path-blocking}. It can also occur through a different path, which we call a {\em loop-blocking}. We divide such a loop-blocking into two possible cases. Indeed, a loop-blocking can occur at one of the vertices on the smallest-weight path from $y$ to $\Ver'$, which we call a {\em late loop-blocking}, or at one of the vertices on the smallest-weight path from $\Ver$ to $x$, which we call an {\em  early path-blocking}. In summary, the following three blockings can arise:

\begin{itemize}
\item[(a)] The correct news overtakes the fake news along the smallest-weight path of the potential fake news epidemic from $\Ver$ to the half-edge along its boundary that is first reached by the potential fake news epidemic from $\Ver'$, which is a path-blocking;

\item[(b)] The potential fake news epidemic from $\Ver$ is overtaken by the correct news along the path from $\Ver$ to $x$, which is an early loop-blocking; or

\item[(c)] The potential fake news epidemic from $y$ to $\Ver'$ is overtaken by the correct news, which is a late loop-blocking.

\end{itemize}
We will show that the two loop-blockings occur with vanishing probability, while the path-blocking does not occur with strictly positive probability, which proves \eqref{aim-prop-exposed-fake-news-2}. We continue to investigate all these blockings, starting with the loop-blockings.
\smallskip

\paragraph{\bf Early loop-blocking with vanishing probability.} 
Recall that we construct the potential fake news smallest-weight graph {\em before} creating the correct news smallest-weight graph from $\Ver$. Thus, for the correct news to overtake the potential fake news epidemic between $\Ver$ and $x$, upon traversing one of the alive half-edges incident to the correct news smallest-weight graph, it must pair with one of the half-edges incident to the potential fake news smallest-weight graph from $\Ver$ at time
$1/(2\lambda_n^{\sss \mathcal{F}})\log{n} +B$. Recall that the number of half-edges incident to the potential fake news smallest-weight graph from $\Ver$ grows like $\Theta_{\sss \prob}(1) \e^{B\lambda^{\sss \mathcal{F}}}\sqrt{n}.$ Since half-edges are paired uniformly at random and there are still $\ell_n(1+o(1))$ unpaired half-edges, each time a half-edge is traversed by the correct news, it has probability at most
	\eqn{
	\frac{1}{\ell_n(1+o(1))}\Theta_{\sss \prob}(1) \e^{B\lambda^{\sss \mathcal{F}}}\sqrt{n}=\Theta_{\sss \prob}(n^{-1/2})
	}
of being paired to a half-edge incident to the potential fake news smallest-weight graph from $\Ver$ at time $1/(2\lambda_n^{\sss \mathcal{F}})\log{n} +B$. Since the correct news smallest-weight graph has size $\op(\sqrt{n})$, by the union bound, this is unlikely to happen. We conclude that the probability of an early loop-blocking vanishes, as required.
\smallskip

\paragraph{\bf Late loop-blocking with vanishing probability.} 
This proof is very similar to the previous proof. Recall that we construct the potential fake news smallest-weight graph {\em before} creating the correct news smallest-weight graph from $\Ver'$ backwards in time. Thus, for the correct news to overtake the potential fake news epidemic between $y$ and $\Ver'$, upon traversing one of the alive half-edges incident to correct news smallest-weight graph, it must pair with one of the half-edges incident to the potential fake news smallest-weight graph from $\Ver'$ at time$1/(2\lambda_n^{\sss \mathcal{F}})\log{n} +B$. The remainder of the proof can be completed as before, and we again conclude that the probability of a late loop-blocking vanishes, as required.
\smallskip

\paragraph{\bf No path-blocking with positive probability.} 
We finally show that the path-blocking event does not occur with strictly positive probability. This is the most involved and innovative part of the argument, and only here do we need to investigate the possible {\em dependence} between the fake and correct news traversal times. Recall that we do not assume that these traversal times are independent, and we only assume that $\lambda^{\sss \mathcal{F}}>\lambda^{\sss \mathcal{R}}$, which is a weak assumption on marginal distributions. 

We start by investigating the traversal times of the fake news, as well as the correct news to the half-edge that connects the two potential fake news smallest-weight graphs. 
\smallskip

Introduce the fake news {\em stable-age distribution} via
	\eqn{
	\label{eq:stable_fake}
	\overline{F}^{\sss{\mathcal F}}(x)=\prob(\bar{L}^{\sss{\mathcal F}}\leq x)= \nu \int_0^x \e^{-\lambda^{\sss{\mathcal F}} y} F^{\sss{\mathcal F}}(dy),
	}
and its mean by
	\eqn{
	\label{eq:stable_fake_mean}
	\overline{\nu}^{\sss{\mathcal F}} = \nu \int_0^\infty x \e^{-\lambda^{\sss{\mathcal F}} x} dF^{\sss{\mathcal F}}(x) = - \nu (\varphi^{\sss{\mathcal F}})'(-\lambda^{\sss{\mathcal F}})
	}
(recall the notation $\varphi^{\sss{\mathcal F}}(s) = \E\left[\e^{s L^{\sss{\mathcal F}}}\right]$ introduced after Theorem \ref{thm-strong-CM}). Further, we let $\bar{L}^{\sss{\mathcal R}}$ denote the distribution of $L^{\sss{\mathcal R}}$ under the stable age distribution $\overline{F}^{\sss{\mathcal F}}$ as
	\eqn{
	\label{eq:stable_real}
	\prob(\bar{L}^{\sss{\mathcal R}}\leq x)= \nu \int_0^\infty \e^{-\lambda^{\sss{\mathcal F}} y} F^{\sss{\mathcal{R}, \mathcal{F}}}(x, dy),
	}
where $F^{\sss{\mathcal{R}, \mathcal{F}}}$ is the joint cumulative distribution function of $(L^{\sss{\mathcal R}},L^{\sss{\mathcal F}})$. In particular, $\bar{L}^{\sss{\mathcal R}}$ has the same law as $L^{\sss{\mathcal R}}$ when $L^{\sss{\mathcal R}}$ and $L^{\sss{\mathcal F}}$ are independent. 
\smallskip

These random variables are relevant, since the edge-weights along the potential fake news smallest-weight path are close to i.i.d.\ random variables with the stable-age distribution, while the corresponding correct news traversal times are then distributed as $\bar{L}^{\sss{\mathcal R}}$. Thus, when a path-blocking occurs, the sum of the $\bar{L}^{\sss{\mathcal R}}$ correct news edge-weights along the path needs to be smaller than the sum of the $\bar{L}^{\sss{\mathcal F}}$ stable-age distributed potential fake-news traversal times. In the following lemma, we show that $\lambda^{\sss \mathcal{F}}>\lambda^{\sss \mathcal{R}}$ implies that the mean of $\bar{L}^{\sss{\mathcal R}}$ is larger than that of $\bar{L}^{\sss{\mathcal F}}$:

\begin{lemma}[Relation stable-age distribution fake news and the expected correct news]
\label{lem-sad-correct}
Assume that $\lambda^{\sss \mathcal{F}}>\lambda^{\sss \mathcal{R}}$. Then, 
	\eqn{
	\label{nu-bar-LbarR}
	\overline{\nu}^{\sss{\mathcal F}} < \E[\bar{L}^{\sss{\mathcal R}}].
	}
\end{lemma}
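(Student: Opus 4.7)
The plan is to recast \eqref{nu-bar-LbarR} as a statement about the slope at an endpoint of a convex one-parameter family of joint moment generating functions. First, unfolding the definitions in \eqref{eq:stable_fake_mean} and \eqref{eq:stable_real} via Fubini yields the tilted-expectation representations
\[
\overline{\nu}^{\sss\mathcal F}=\nu\,\E\bigl[L^{\sss\mathcal F}\,\e^{-\lambda^{\sss\mathcal F}L^{\sss\mathcal F}}\bigr],
\qquad
\E[\bar L^{\sss\mathcal R}]=\nu\,\E\bigl[L^{\sss\mathcal R}\,\e^{-\lambda^{\sss\mathcal F}L^{\sss\mathcal F}}\bigr].
\]
(If the second expectation is $+\infty$, the claim is trivial, so one may assume both are finite.) Subtracting, \eqref{nu-bar-LbarR} is therefore equivalent to $\nu\,\E[(L^{\sss\mathcal R}-L^{\sss\mathcal F})\,\e^{-\lambda^{\sss\mathcal F}L^{\sss\mathcal F}}]>0$. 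Note that the joint law of $(L^{\sss\mathcal F},L^{\sss\mathcal R})$ now enters explicitly, so no direct comparison of marginals suffices.

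To bypass the dependence, I would introduce the auxiliary function
\[
G(t)=\nu\,\E\bigl[\e^{-tL^{\sss\mathcal F}-(\lambda^{\sss\mathcal F}-t)L^{\sss\mathcal R}}\bigr],\qquad t\in[0,\lambda^{\sss\mathcal F}],
\]
which interpolates between tilting purely by $L^{\sss\mathcal R}$ at $t=0$ and purely by $L^{\sss\mathcal F}$ at $t=\lambda^{\sss\mathcal F}$. Differentiating twice under the expectation (justified by dominated convergence using the weighted AM--GM envelope $\e^{-tL^{\sss\mathcal F}-(\lambda^{\sss\mathcal F}-t)L^{\sss\mathcal R}}\le (t/\lambda^{\sss\mathcal F})\e^{-\lambda^{\sss\mathcal F}L^{\sss\mathcal F}}+((\lambda^{\sss\mathcal F}-t)/\lambda^{\sss\mathcal F})\e^{-\lambda^{\sss\mathcal F}L^{\sss\mathcal R}}$, both terms of which are integrable by definition of the Malthusian parameters) gives
\[
G''(t)=\nu\,\E\bigl[(L^{\sss\mathcal F}-L^{\sss\mathcal R})^2\,\e^{-tL^{\sss\mathcal F}-(\lambda^{\sss\mathcal F}-t)L^{\sss\mathcal R}}\bigr]\ge 0,
\]
so $G$ is convex on $[0,\lambda^{\sss\mathcal F}]$. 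The endpoints are pinned down by the defining equations of the Malthusian parameters: $G(\lambda^{\sss\mathcal F})=\nu\,\varphi^{\sss\mathcal F}(-\lambda^{\sss\mathcal F})=1$, while $G(0)=\nu\,\varphi^{\sss\mathcal R}(-\lambda^{\sss\mathcal F})<\nu\,\varphi^{\sss\mathcal R}(-\lambda^{\sss\mathcal R})=1$, the strict inequality coming from the strict monotonicity of $s\mapsto\varphi^{\sss\mathcal R}(-s)$ on $(0,\infty)$ (as $L^{\sss\mathcal R}$ is not a.s.\ zero, otherwise its Malthusian parameter would not exist) together with the hypothesis $\lambda^{\sss\mathcal F}>\lambda^{\sss\mathcal R}$.

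The proof then closes in one line: convexity of $G$ makes $G'$ non-decreasing, so
\[
G'(\lambda^{\sss\mathcal F})\ge \frac{G(\lambda^{\sss\mathcal F})-G(0)}{\lambda^{\sss\mathcal F}}>0,
\]
and direct differentiation yields $G'(\lambda^{\sss\mathcal F})=\nu\,\E[(L^{\sss\mathcal R}-L^{\sss\mathcal F})\,\e^{-\lambda^{\sss\mathcal F}L^{\sss\mathcal F}}]$, which is precisely the reformulation of \eqref{nu-bar-LbarR} obtained in the first step. The conceptual content is the convex interpolation between the two Malthusian tilts, which converts a statement comparing two tilted means (with possible non-trivial coupling of $L^{\sss\mathcal F}$ and $L^{\sss\mathcal R}$) into a single-variable convexity inequality; the only real obstacle is the integrability bookkeeping required to differentiate under the expectation, which the envelope above settles.
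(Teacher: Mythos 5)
Your proof is correct and, once unwrapped, is the same argument as the paper's: the paper applies the pointwise tangent-line inequality $\e^{-\lambda^{\sss\mathcal F}x}-\e^{-\lambda^{\sss\mathcal F}y}\le\lambda^{\sss\mathcal F}\e^{-\lambda^{\sss\mathcal F}x}(y-x)$ (i.e.\ convexity of $z\mapsto\e^{-z}$) inside the expectation, takes expectations, and uses the two Malthusian facts $\E[\e^{-\lambda^{\sss\mathcal F}L^{\sss\mathcal F}}]=1/\nu$ and $\E[\e^{-\lambda^{\sss\mathcal F}L^{\sss\mathcal R}}]<1/\nu$, which are exactly the endpoint evaluations $G(\lambda^{\sss\mathcal F})=1>G(0)$ and the secant--tangent inequality $G'(\lambda^{\sss\mathcal F})\ge(G(\lambda^{\sss\mathcal F})-G(0))/\lambda^{\sss\mathcal F}$ in your formulation. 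Your interpolating family $G$ is an attractive repackaging that makes the role of convexity and the Malthusian normalizations structurally transparent, but it does not constitute a different route.
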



\begin{proof}
%
Using the definitions of $\overline{\nu}^{\sss{\mathcal F}}$ and $\bar{L}^{\sss{\mathcal R}}$, \eqref{nu-bar-LbarR} may be rewritten as
	$$
	\E[L^{\sss{\mathcal F}} \e^{-\lambda^{\sss{\mathcal F}} L^{\sss{\mathcal F}}}] 
	< \E[L^{\sss{\mathcal R}} \e^{-\lambda^{\sss{\mathcal F}} L^{\sss{\mathcal F}}}].
	$$
To prove the above, we can now use the inequality 
	$$
	\e^{-\lambda^{\sss{\mathcal F}} x} - \e^{-\lambda^{\sss{\mathcal F}} y} 
	=\e^{-\lambda^{\sss{\mathcal F}} x}(1-\e^{-\lambda^{\sss{\mathcal F}} (y-x)}) 
	\le \lambda^{\sss{\mathcal F}} \e^{-\lambda^{\sss{\mathcal F}} x} (y-x).
	$$
which, since $1-\e^{-x}\leq x$, is valid for all $x$ and $y$. In turn, this implies
	\eqn{
	\lambda^{\sss{\mathcal F}} \E[(L^{\sss{\mathcal R}} - L^{\sss{\mathcal F}}) \e^{-\lambda^{\sss{\mathcal F}} L^{\sss{\mathcal F}}}] 
	\ge \E[\e^{-\lambda^{\sss{\mathcal F}} L^{\sss{\mathcal F}}}] - \E[\e^{-\lambda^{\sss{\mathcal F}} L^{\sss{\mathcal R}}}] 
	= \frac{1}{\nu} - \E[\e^{-\lambda^{\sss{\mathcal F}} L^{\sss{\mathcal R}}}] > 0,
	}
where we use that $\E[\e^{-\lambda^{\sss{\mathcal F}} L^{\sss{\mathcal R}}}]  < \E[\e^{-\lambda^{\sss{\mathcal R}} L^{\sss{\mathcal R}}}] = 1/\nu$ because $\lambda^{\sss{\mathcal F}} > \lambda^{\sss{\mathcal R}}$.
\end{proof}
\medskip


Before discussing corollaries of Lemma \ref{lem-sad-correct}, let us describe how it helps proving the result needed in Remark \ref{rem-weak-vs-strong}:

\begin{remark}[Weak survival is really weaker than strong survival]
\label{rem-weak-vs-strong-1}
{\rm Assume that $\lambda^{\sss{\mathcal F}} > \lambda^{\sss{\mathcal R}}$. Then
\begin{align*}
\rho & = \inf_{s \ge 0} \E\big[\e^{s(L^{\sss{\mathcal R}} - L^{\sss{\mathcal F}})}\big] = \frac{1}{\nu} \inf_{s \ge 0} \E\left[\e^{s(\overline{L}^{\sss{\mathcal R}} - \overline{L}^{\sss{\mathcal F}})} \e^{\lambda^{\sss{\mathcal F}}\overline{L}^{\sss{\mathcal F}}} \right] > \frac{1}{\nu} \inf_{s \ge 0} \E\big[\e^{s(\overline{L}^{\sss{\mathcal R}} - \overline{L}^{\sss{\mathcal F}})}\big] = \frac{1}{\nu},
\end{align*}
where the last equality is due to the fact that the function $s \mapsto \E\big[\e^{s(\overline{L}^{\sss{\mathcal R}} - \overline{L}^{\sss{\mathcal F}})}\big]$ for $s \ge 0$ takes its infimum at $s=0$, since its derivative satisfies
$$
\E\left[(\overline{L}^{\sss{\mathcal R}} - \overline{L}^{\sss{\mathcal F}})\e^{s(\overline{L}^{\sss{\mathcal R}} - \overline{L}^{\sss{\mathcal F}})}\right] \ge \E\big[(\overline{L}^{\sss{\mathcal R}} - \overline{L}^{\sss{\mathcal F}})\big] \E\big[\e^{s(\overline{L}^{\sss{\mathcal R}} - \overline{L}^{\sss{\mathcal F}})}\big] \ge 0,
$$
thanks to Lemma \ref{lem-sad-correct}.}\hfill$\ensymboldefinition$
\end{remark}

We next investigate the implications of Lemma \ref{lem-sad-correct}. Consider the continuous-time branching process where every individual in its lifetime gives birth to a random number of children taken independently over individuals from a distribution $D^\star-1$ with a finite expectation $\nu=\expec[D^\star-1]> 1$. The children of individuals are born at random times after the birth of the individual, taken from distribution $F^{\sss{\mathcal F}}$, and these birth times are independent across individuals and independent from the numbers of children. This is a so-called age-dependent branching process. An individual dies immediately after giving birth to its last child. Denote the time of birth of an individual $v$ in generation $k$ by
	\eqn{
	S_k^{v, \sss{\mathcal F}} = \sum_{i=1}^k L_i^{v,\sss{\mathcal F}}
	}
(with obvious notation for the sequence $L_i^{v,\sss{\mathcal F}}$). Also, denote the potential time of birth of the individual in the same epidemic when the birth-time distributions were replaced by $F^{\sss{\mathcal R}}$ by
	\eqn{
	S_k^{v, \sss{\mathcal R}} = \sum_{i=1}^k L_i^{v,\sss{\mathcal R}}
	}
As in our setting, we do assume independence between realisations of $(L^{\sss{\mathcal F}}, L^{\sss{\mathcal R}})$ over different individuals, but do not assume any specific dependence structure between the values of  $L^{\sss{\mathcal F}}$ and $L^{\sss{\mathcal R}}$ for any individual.

For a random walk $(S_k^{\sss{\mathcal R}} - S_k^{\sss{\mathcal F}})_{k \ge 0}$, denote
$$
\sigma = \inf\{k \ge 0\colon S_k^{\sss{\mathcal R}} - S_k^{\sss{\mathcal F}} < 0\},
$$
and the corresponding stopping time for the random walk $(\bar{S}_k^{\sss{\mathcal R}} - \bar{S}_k^{\sss{\mathcal F}})_{k \ge 0}$ by $\bar{\sigma}$ (this is defined in the obvious manner by replacing birth-time distributions with their stable-age counterparts). Note that, thanks to Lemma \ref{lem-sad-correct},
	\begin{equation} \label{eq:sigma_bar}
	p^\star \equiv \P(\bar{\sigma} = \infty) > 0.
	\end{equation}

Let $N^{\sss{\mathcal F}}_t$ denote the number of individuals $v$ alive at time $t$ such that $S_k^{u, \sss{\mathcal F}} < S_k^{u, \sss{\mathcal R}}$ for every $u$ which is an ancestor of $v$ (in other words, $N^{\sss{\mathcal F}}_t$ counts the number of individuals alive at time $t$ that are reached by fake news). Further, let $N^{\sss{\mathcal F}}_{t,\vep}$ be the restriction of the alive individuals counted in $N^{\sss{\mathcal F}}_t$ that have remaining lifetime at most $\vep>0$. 

Let $Z^{\sss{\mathcal F}}_{t,\vep}$ denote the number of individuals reached by the fake-news branching process, such that their life time is smaller than $\vep$, and denote
	\begin{equation} \label{eq:def_lifetime}
	H(x) = \int_0^\infty \e^{-\lambda^{\sss{\mathcal F}}z} \P(L^{\sss{\mathcal F}} \in (z,z+x)) dz, \quad x \ge 0.
	\end{equation}
The following lemma, describing the exponential growth of $Z^{\sss{\mathcal F}}_{t,\vep}$ and $N^{\sss{\mathcal F}}_{t,\vep}$, is key to our analysis:
\begin{lemma}[Survival fake news in CTBP setting]
\label{lem:branching_process_survival}
Condition on the survival of the fake-news branching process. Then
	\eqn{
	\label{remaining-lifetime}
	\lim_{t \to \infty} \e^{-\lambda^{\sss{\mathcal F}}t} \E[Z^{\sss{\mathcal F}}_{t,\vep}] = \frac{1}{\bar{\nu}} H(\varepsilon).
	}
If, in addition, $\lambda^{\sss{\mathcal F}} > \lambda^{\sss{\mathcal R}}$, then
	\eqn{
	\label{non-blocked-individuals-remaining-lifetime}
	\lim_{t \to \infty} \e^{-\lambda^{\sss{\mathcal F}}t} \E [N^{\sss{\mathcal F}}_{t,\vep}] = p^\star\frac{1}{\bar{\nu}} H(\varepsilon).
	}
\end{lemma}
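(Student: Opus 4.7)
My plan is to recognise both claims as applications of the first-moment renewal theorem of Nerman--Jagers for the Crump--Mode--Jagers branching process with offspring distribution $D^\star-1$ and i.i.d.\ birth times distributed as $L^{\sss{\mathcal F}}$. This CTBP has Malthusian parameter $\lambda^{\sss{\mathcal F}}$, reproduction intensity $\mu(dx)=\nu F^{\sss{\mathcal F}}(dx)$, and stable-age mean $\bar\nu^{\sss{\mathcal F}}=-\nu(\varphi^{\sss{\mathcal F}})'(-\lambda^{\sss{\mathcal F}})=\int_0^\infty u\e^{-\lambda^{\sss{\mathcal F}}u}\mu(du)$ (this is the $\bar\nu$ appearing in the lemma). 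The Nerman--Jagers first-moment formula then reads
\[\lim_{t\to\infty}\e^{-\lambda^{\sss{\mathcal F}}t}\E\Big[\sum_v \phi_v(t-S_v^{\sss{\mathcal F}})\Big]=\frac{1}{\bar\nu^{\sss{\mathcal F}}}\int_0^\infty \e^{-\lambda^{\sss{\mathcal F}}u}\E[\phi(u)]\,du,\]
for any directly-Riemann-integrable random characteristic $\phi$.

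For \eqref{remaining-lifetime}, take $\phi(u)=\indicwo{u\le L^{\sss{\mathcal F}}\le u+\vep}$, the indicator that an individual is alive at age $u$ with remaining lifetime at most $\vep$. Then $Z^{\sss{\mathcal F}}_{t,\vep}=\sum_v \phi_v(t-S_v^{\sss{\mathcal F}})$ and $\E[\phi(u)]=\P(L^{\sss{\mathcal F}}\in(u,u+\vep])$, so the displayed formula immediately produces the limit $H(\vep)/\bar\nu^{\sss{\mathcal F}}$.

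For \eqref{non-blocked-individuals-remaining-lifetime}, the counted event now carries an extra non-blocking factor $\indicwo{\sigma>k}$ that depends on the whole ancestry of $v$, not just on the subtree below $v$, and so cannot be absorbed into an ordinary CMJ characteristic. Instead I use the many-to-one lemma to unfold
\[\E[N^{\sss{\mathcal F}}_{t,\vep}]=\sum_{k\ge 0}\nu^k\,\E\Big[\big(F^{\sss{\mathcal F}}(t-S_k^{\sss{\mathcal F}}+\vep)-F^{\sss{\mathcal F}}(t-S_k^{\sss{\mathcal F}})\big)_{+}\indicwo{\sigma>k}\Big],\]
where $(S_k^{\sss{\mathcal F}})_{k\ge 0}$ is the spine walk with i.i.d.\ $F^{\sss{\mathcal F}}$-increments and $\{\sigma>k\}$ is the non-blocking event driven by the joint $(L^{\sss{\mathcal R}},L^{\sss{\mathcal F}})$-increments. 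The Malthusian change of measure $\nu F^{\sss{\mathcal F}}(dx)=\e^{\lambda^{\sss{\mathcal F}}x}\bar F^{\sss{\mathcal F}}(dx)$ tilts the joint spine law to the stable-age law $\bar F^{\sss{\mathcal R},\mathcal{F}}$, absorbs the $\nu^k$ factors into $\e^{\lambda^{\sss{\mathcal F}}\bar S_k^{\sss{\mathcal F}}}$, and after multiplying by $\e^{-\lambda^{\sss{\mathcal F}}t}$ yields
\[\e^{-\lambda^{\sss{\mathcal F}}t}\E[N^{\sss{\mathcal F}}_{t,\vep}]=\sum_{k\ge 0}\E\big[g(t-\bar S_k^{\sss{\mathcal F}})\,\indicwo{\bar\sigma>k}\big],\]
with $g(u):=\e^{-\lambda^{\sss{\mathcal F}}u}(F^{\sss{\mathcal F}}(u+\vep)-F^{\sss{\mathcal F}}(u))_+$ directly Riemann integrable on $[0,\infty)$ and $\int_0^\infty g(u)\,du=H(\vep)$.

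The last step is the key renewal theorem. The tilted spine walk $\bar S_k^{\sss{\mathcal F}}$ has positive drift $\bar\nu^{\sss{\mathcal F}}$, and by Lemma~\ref{lem-sad-correct} the companion walk $\bar S_k^{\sss{\mathcal R}}-\bar S_k^{\sss{\mathcal F}}$ has strictly positive drift as well, so $\P(\bar\sigma>k)\downarrow p^\star>0$. The principal obstacle is that $\bar\sigma$ is \emph{not} independent of $(\bar S_k^{\sss{\mathcal F}})_k$, since both are built from the same lineage increments, so the key renewal theorem cannot be invoked directly. I would handle this by truncation: fix a large $k_0$, decompose $\bar S_k^{\sss{\mathcal F}}=\bar S_{k_0}^{\sss{\mathcal F}}+\widehat S_{k-k_0}^{\sss{\mathcal F}}$ with $\widehat S$ an independent copy for $k>k_0$, and sandwich $\indicwo{\bar\sigma=\infty}\le \indicwo{\bar\sigma>k}\le\indicwo{\bar\sigma>k_0}$; applying the key renewal theorem to the decoupled tail and letting $k_0\to\infty$ (so $\P(\bar\sigma>k_0)\downarrow p^\star$) squeezes the upper and lower bounds to the common value $p^\star H(\vep)/\bar\nu^{\sss{\mathcal F}}$, as required.
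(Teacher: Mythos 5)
Your proof of \eqref{remaining-lifetime} is fine and is essentially the paper's argument: rewrite $\e^{-\lambda^{\sss{\mathcal F}}t}\E[Z^{\sss{\mathcal F}}_{t,\vep}]$ as a sum over generations, tilt by $\e^{\lambda^{\sss{\mathcal F}}\cdot}$ to pass to the stable-age walk $\bar S_k^{\sss{\mathcal F}}$, and apply the Key Renewal Theorem to the resulting renewal sum $\sum_k\E[g(t-\bar S_k^{\sss{\mathcal F}})]$ with $g(u)=\e^{-\lambda^{\sss{\mathcal F}}u}\P(L^{\sss{\mathcal F}}\in(u,u+\vep))$. Invoking Nerman--Jagers is just a packaged form of this.

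For \eqref{non-blocked-individuals-remaining-lifetime}, your reduction via many-to-one and tilting to
$$\e^{-\lambda^{\sss{\mathcal F}}t}\E[N^{\sss{\mathcal F}}_{t,\vep}]=\sum_{k\ge 0}\E\big[g(t-\bar S_k^{\sss{\mathcal F}})\,\indicwo{\bar\sigma>k}\big]$$
is correct, and you are right to flag the dependence between $\bar\sigma$ and $(\bar S_k^{\sss{\mathcal F}})_k$ as the obstacle. But the sandwich you propose does not close. The upper bound $\indicwo{\bar\sigma>k}\le\indicwo{\bar\sigma>k_0}$ decouples as you say, giving $\limsup_t\le\P(\bar\sigma>k_0)H(\vep)/\bar\nu\to p^\star H(\vep)/\bar\nu$. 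The lower bound $\indicwo{\bar\sigma>k}\ge\indicwo{\bar\sigma=\infty}$, however, does \emph{not} decouple from the tail of the walk: $\{\bar\sigma=\infty\}$ is determined by \emph{all} the increments, including those beyond $k_0$, which are exactly the increments that build $\widehat S_{k-k_0}$. So $\E[g(t-\bar S_{k_0}^{\sss{\mathcal F}}-\widehat S_{k-k_0}^{\sss{\mathcal F}})\indicwo{\bar\sigma=\infty}]$ is not a product, and the Key Renewal Theorem cannot be applied to the ``decoupled tail''. Indeed, $\sum_k\E[g(t-\bar S_k^{\sss{\mathcal F}})\indicwo{\bar\sigma=\infty}]$ converges to $p^\star H(\vep)/\bar\nu$ only if one proves that the conditioning on $\{\bar\sigma=\infty\}$ asymptotically washes out of the renewal structure of $\bar S^{\sss{\mathcal F}}$; this is not immediate, and your write-up gives no argument for it.

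The paper's route avoids this difficulty by working with the \emph{blocked} count $Z^{\sss{\mathcal F}}_{t,\vep}-N^{\sss{\mathcal F}}_{t,\vep}$, i.e.\ the indicator $\indicwo{\bar\sigma\le k}$, and decomposing over the exact value $\bar\sigma=m$. Since $\bar\sigma$ is a stopping time and $\{\bar\sigma=m\}\in\mathcal F_m$, the post-$m$ increments are a fresh independent copy, so
$$\sum_{k\ge 0}\E\big[g(t-\bar S_k^{\sss{\mathcal F}})\indicwo{\bar\sigma\le k}\big]
=\int_0^t \sum_{m\ge 0}\P(\bar\sigma=m,\bar S_m^{\sss{\mathcal F}}\in du)\,\e^{-\lambda^{\sss{\mathcal F}}(t-u)}\E[Z^{\sss{\mathcal F}}_{t-u,\vep}],$$
and the inner factor is exactly the quantity controlled by \eqref{remaining-lifetime} (with a uniform bound $\e^{-\lambda^{\sss{\mathcal F}}s}\E[Z^{\sss{\mathcal F}}_{s,\vep}]\le C$ from the cited results of \cite{BhaHofHoo17}). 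Fatou for the liminf and the uniform bound for the limsup then give $(1-p^\star)H(\vep)/\bar\nu$, whence \eqref{non-blocked-individuals-remaining-lifetime} by subtraction. You should replace the $\indicwo{\bar\sigma=\infty}$ sandwich by this conditioning on $\{\bar\sigma=m\}$ (or equivalently, by the subtraction trick), as the latter genuinely decouples via the strong Markov property and your former does not.
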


Before proving the above lemma, let us note that, by taking $\varepsilon = \infty$ in \eqref{non-blocked-individuals-remaining-lifetime} above, we conclude
\eqn{
	\label{non-blocked-individuals}
	\lim_{t \to \infty} \e^{-\lambda^{\sss{\mathcal F}}t} \E [N^{\sss{\mathcal F}}_{t}] = p^\star \frac{\nu-1}{\lambda^{\sss{\mathcal F}} \bar{\nu} \nu}.
	}

Let $A_t$ be the event that a uniformly chosen alive individual in the fake-news branching process at time $t$ has in fact been reached by fake news (i.e. the fake-news epidemic was not overtaken by the correct-news one at any point on the way to this individual).

Lemma \ref{lem:branching_process_survival} implies that $\P(A_t) \to p^\star$ as $t \to \infty$. Indeed, as in \eqref{rewrite-indicators}, we can write
	$$
	N^{\sss{\mathcal F}}_{t} = \sum_{i=1}^{Z_{t}^{\sss{\mathcal F}}} \1_i,
	$$
where $Z_{t}^{\sss{\mathcal F}}$ is the number of individuals alive in the fake-news branching process at time $t$ and $\1_i$ is the indicator function that individual $i$ has the property described above. As $\1_i$ are interchangeable, it is clear that
	$$
	\expec[N^{\sss{\mathcal F}}_{t}] = \expec[Z_{t}^{\sss{\mathcal F}}] \P(A_t),
	$$
and hence $\P(A_t) \to p^\star$ as $t \to \infty$, thanks to \eqref{non-blocked-individuals} and the known result
	$$
	\lim_{t \to \infty} \e^{-\lambda^{\sss{\mathcal F}}t} \E [Z^{\sss{\mathcal F}}_{t}] =\frac{\nu-1}{\lambda^{\sss{\mathcal F}} \bar{\nu} \nu}.
	$$
Let $B_{t,\vep}$ denote the event that a (uniformly) randomly chosen individual alive in the fake-news branching process at time $t$ has remaining life time at most $\vep$. Similarly to the above, one can see from \eqref{remaining-lifetime} that 
	$$
	\P(B_{t,\vep}) \to \frac{\lambda^{\sss{\mathcal F}} \nu}{\nu-1} H(\vep).
	$$
Moreover, \eqref{non-blocked-individuals-remaining-lifetime} implies that
	$$
	\P(A_t \cap B_{t,\vep}) \to p^\star\frac{\lambda^{\sss{\mathcal F}} \nu}{\nu-1} H(\vep)
	$$
and we can conclude that the events $A_t$ and $B_{t,\vep}$ are asymptotically independent, i.e.
	$$
	\lim_{t \to \infty} \P(A_t \cap B_{t,\vep}) = \lim_{t\to \infty} \P(A_t) \P(B_{t,\vep}).
	$$
\begin{proof}

We begin by proving \eqref{remaining-lifetime}. Indeed, write
	\begin{align*}
	\e^{-\lambda^{\sss{\mathcal F}}t} \E [Z^{\sss{\mathcal F}}_{t,\vep}] & = \sum_{k=0}^\infty \nu^k \e^{-\lambda^{\sss{\mathcal F}}t} 	
	\P(S_k^{\sss{\mathcal F}} \le t, S_{k+1}^{\sss{\mathcal F}} \in (t,t+\vep)) \\ 
	& = \sum_{k=0}^\infty \E\left[\e^{-\lambda^{\sss{\mathcal F}}(t-\bar{S}_k^{\sss{\mathcal F}})}; 
	\bar{S}_k^{\sss{\mathcal F}} \le t, \bar{S}_k^{\sss{\mathcal F}} + L_{k+1}^{\sss{\mathcal F}} \in (t,t+\vep) \right] \\
	& = \sum_{k=0}^\infty \int_{0}^t \e^{-\lambda^{\sss{\mathcal F}}(t-u)} 
	\P(L^{\sss{\mathcal F}} \in (t-u,t-u+\vep)) \P(\bar{S}_k^{\sss{\mathcal F}} \in du) \\
	& \to \frac{1}{\bar{\nu}} \int_0^\infty \e^{-\lambda^{\sss{\mathcal F}}z} \P(L^{\sss{\mathcal F}} \in (z,z+\vep)) dz,
	\end{align*}
due to the Key Renewal Theorem.

The limiting relation \eqref{non-blocked-individuals-remaining-lifetime} now follows from a slightly more involved version of the argument above. It is more convenient to work with the number $Z^{\sss{\mathcal F}}_{t,\vep} - N^{\sss{\mathcal F}}_{t,\vep}$ of those individuals reached by the fake-news branching process that have been overtaken by correct news at some point on the way. 
We now write
	\begin{align}
	\label{blocked-individuals}
	& \e^{-\lambda^{\sss{\mathcal F}}t} \E [Z^{\sss{\mathcal F}}_{t,\vep} - N^{\sss{\mathcal F}}_{t,\vep}] 
	= \sum_{k=0}^\infty \nu^k \e^{-\lambda^{\sss{\mathcal F}}t} 
	\P(S_k^{\sss{\mathcal F}} \le t, S_{k+1}^{\sss{\mathcal F}} \in (t,t+\vep), \sigma \le k) \\
	& = \sum_{k=0}^\infty \sum_{m=0}^k \nu^k \e^{-\lambda^{\sss{\mathcal F}}t} 
	\P(S_k^{\sss{\mathcal F}} \le t, S_{k+1}^{\sss{\mathcal F}} \in (t,t+\vep), \sigma = m)\nn\\
	& = \sum_{m=0}^\infty \nu^m \sum_{k=m}^\infty \nu^{k-m} \e^{-\lambda^{\sss{\mathcal F}}t} 
	\P(S_k^{\sss{\mathcal F}} \le t, S_{k+1}^{\sss{\mathcal F}} \in (t,t+\vep), \sigma = m)\nn\\
	& = \sum_{m=0}^\infty \nu^m \sum_{k=m}^\infty \nu^{k-m} \e^{-\lambda^{\sss{\mathcal F}}t} \int_0^t \P(S_k^{\sss{\mathcal F}} \le t, S_{k+1}^{\sss{\mathcal F}} \in (t,t+\vep), \sigma = m, S_m^{\sss{\mathcal F}}  \in du).\nn
	\end{align}
We next use that
	\eqan{
	&\P(S_k^{\sss{\mathcal F}} \le t, S_{k+1}^{\sss{\mathcal F}} \in (t,t+\vep), \sigma = m, S_m^{\sss{\mathcal F}}  \in du)\nn\\
	&\qquad=\P(\sigma=m, S_m^{\sss{\mathcal F}}  \in du)\P(S_{k-m}^{\sss{\mathcal F}} \le t-u, S_{k-m+1}^{\sss{\mathcal F}} \in (t-u,t-u+\vep)),\nn
	}
and let $l=m-k$, to rewrite the above as
	\begin{align*}
	&\sum_{m=0}^\infty \nu^m \int_0^t \P(\sigma=m, S_m^{\sss{\mathcal F}}  \in du) 
	\sum_{l=0}^\infty \nu^{l}\e^{-\lambda^{\sss{\mathcal F}}t} \P(S_{l}^{\sss{\mathcal F}} \le t-u, S_{l+1}^{\sss{\mathcal F}} \in (t-u,t-u+\vep))\\
	& = \int_{0}^t \sum_{m=0}^\infty \P(\bar{\sigma}=m, \bar{S}_m^{\sss{\mathcal F}}  \in du) 
	\sum_{l=0}^\infty \nu^{l}\e^{-\lambda^{\sss{\mathcal F}}(t-u)} 
	\P(S_{l}^{\sss{\mathcal F}} \le t-u, S_{l+1}^{\sss{\mathcal F}} \in (t-u,t-u+\vep))\\
	& = \int_{0}^t \sum_{m=0}^\infty \P(\bar{\sigma}=m, \bar{S}_m^{\sss{\mathcal F}}  \in du) 
	\e^{-\lambda^{\sss{\mathcal F}}(t-u)} \E[Z^{\sss{\mathcal F}}_{t-u,\vep}].
	\end{align*}
Note that, thanks to \eqref{remaining-lifetime}, for any $\delta > 0$, we can find $T$ such that, for all $s \ge T$,
	$$
	\left|\frac{\e^{-\lambda^{\sss{\mathcal F}}s} 
	\E[Z^{\sss{\mathcal F}}_{s,\vep}]}{\frac{1}{\bar{\nu}} \int_0^\infty \e^{-\lambda^{\sss{\mathcal F}}z} 
	\P(L^{\sss{\mathcal F}} \in (z,z+\vep)) dz}-1\right| \le \delta.
	$$
Hence, by Fatou's Lemma,
	\begin{align*}
	& \liminf_{t \to \infty} \int_{0}^t \sum_{m=0}^\infty \P(\bar{\sigma}=m, \bar{S}_m^{\sss{\mathcal F}}  \in du) 
	\e^{-\lambda^{\sss{\mathcal F}}(t-u)} \E[Z^{\sss{\mathcal F}}_{t-u,\vep}] \\ 
	& \ge \liminf_{t \to \infty} \int_{0}^{t-T} \sum_{m=0}^\infty \P(\bar{\sigma}=m, \bar{S}_m^{\sss{\mathcal F}}  \in du) 
	\e^{-\lambda^{\sss{\mathcal F}}(t-u)} \E[Z^{\sss{\mathcal F}}_{t-u,\vep}] \\
	& \ge (1-\delta) \frac{1}{\bar{\nu}} \int_0^\infty \e^{-\lambda^{\sss{\mathcal F}}z} 
	\P(L^{\sss{\mathcal F}} \in (z,z+\vep)) dz \liminf_{t \to \infty} \int_0^{t-T} 
	\sum_{m=0}^\infty \P(\bar{\sigma}=m, \bar{S}_m^{\sss{\mathcal F}}  \in du) \\ 
	& = (1-\delta) \frac{1}{\bar{\nu}} \int_0^\infty \e^{-\lambda^{\sss{\mathcal F}}z} \P(L^{\sss{\mathcal F}} \in (z,z+\vep)) dz 
	\liminf_{t \to \infty} \P(\bar{\sigma} < \infty, \bar{S}_{\bar{\sigma}}^{\sss{\mathcal F}} \le t-T) \\
	& \ge (1-\delta) (1-p^\star) \frac{1}{\bar{\nu}} \int_0^\infty \e^{-\lambda^{\sss{\mathcal F}}z} \P(L^{\sss{\mathcal F}} \in (z,z+\vep)) dz.
	\end{align*}
To provide the upper bound, we apply the inequality
	$$
	\e^{-\lambda^{\sss{\mathcal F}}(t-u)} \E[Z^{\sss{\mathcal F}}_{t-u,\vep}] 
	\le \e^{-\lambda^{\sss{\mathcal F}}(t-u)} \E[Z^{\sss{\mathcal F}}_{t-u}] \le C,
	$$
see \cite[Lemma 5.4 and (4.41)]{BhaHofHoo17}. With this,
	\begin{align*}
	& \limsup_{t \to \infty} \int_{0}^t \sum_{m=0}^\infty \P(\bar{\sigma}=m, \bar{S}_m^{\sss{\mathcal F}}  \in du) 
	\e^{-\lambda^{\sss{\mathcal F}}(t-u)} \E[Z^{\sss{\mathcal F}}_{t-u,\vep}] \\ 
	 = & \limsup_{t \to \infty}\biggl( \int_{0}^{t-T} \sum_{m=0}^\infty \P(\bar{\sigma}=m, \bar{S}_m^{\sss{\mathcal F}}  \in du) 
	 \e^{-\lambda^{\sss{\mathcal F}}(t-u)} \E[Z^{\sss{\mathcal F}}_{t-u,\vep}] \\ 
	&+ \int_{t-T}^{t} \sum_{m=0}^\infty \P(\bar{\sigma}=m, \bar{S}_m^{\sss{\mathcal F}}  \in du) \e^{-\lambda^{\sss{\mathcal F}}(t-u)} 
	\E[Z^{\sss{\mathcal F}}_{t-u,\vep}] \biggr)\\
 	\le & \limsup_{t \to \infty} \biggl((1+\delta) \frac{1}{\bar{\nu}} \int_0^\infty \e^{-\lambda^{\sss{\mathcal F}}z} 
	\P(L^{\sss{\mathcal F}} \in (z,z+\vep)) dz \int_0^{t-T} \sum_{m=0}^\infty \P(\bar{\sigma}=m, \bar{S}_m^{\sss{\mathcal F}}  \in du)\\
	& + C \int_{t-T}^t \sum_{m=0}^\infty \P(\bar{\sigma}=m, \bar{S}_m^{\sss{\mathcal F}}  \in du) \biggr)\\
	 = & \limsup_{t \to \infty} \biggl((1+\delta) \frac{1}{\bar{\nu}} \int_0^\infty \e^{-\lambda^{\sss{\mathcal F}}z} 
	 \P(L^{\sss{\mathcal F}} \in (z,z+\vep)) dz \int_0^{t-T} 
	 \sum_{m=0}^\infty \P(\bar{\sigma} < \infty, \bar{S}_{\bar{\sigma}}^{\sss{\mathcal F}} \le t-T) \\
	& + C \P(\bar{\sigma} < \infty, \bar{S}_{\bar{\sigma}}^{\sss{\mathcal F}} \in (t-T,t))\biggr)\\
	 = & (1+\delta) (1-p^\star) \frac{1}{\bar{\nu}} \int_0^\infty \e^{-\lambda^{\sss{\mathcal F}}z} \P(L^{\sss{\mathcal F}} \in (z,z+\vep)) dz.
	\end{align*}
Combining the above, we conclude
	\begin{equation} 
	\label{eq:limit_aux2}
	\e^{-\lambda^{\sss{\mathcal F}}t} \E [Z^{\sss{\mathcal F}}_{t,\vep} - N^{\sss{\mathcal F}}_{t,\vep}] 
	\to (1-p^\star) \frac{1}{\bar{\nu}} \int_0^\infty \e^{-\lambda^{\sss{\mathcal F}}z} \P(L^{\sss{\mathcal F}} \in (z,z+\vep)) dz,
	\end{equation}
as $t \to \infty$ and thus, taking \eqref{remaining-lifetime} into account, we finally obtain
	\begin{equation} 
	\label{eq:limit_ntvep}
	\lim_{t \to \infty} \e^{-\lambda^{\sss{\mathcal F}}t} \E [N^{\sss{\mathcal F}}_{t,\vep}] 
	= p^\star \frac{1}{\bar{\nu}} \int_0^\infty \e^{-\lambda^{\sss{\mathcal F}}z} \P(L^{\sss{\mathcal F}} \in (z,z+\vep)) dz.
	\end{equation}
\end{proof}

We next discuss an extension of Lemma \ref{lem:branching_process_survival}. For $\Time>0$ large, we let $N^{\sss{\mathcal F}}_{t,\Time}$ denote the number of individuals $v$ alive at time $t$ such that $S_k^{u, \sss{\mathcal F}} < S_k^{u, \sss{\mathcal R}}$ for every $u$ which is an ancestor of $v$, and $k$ such that $S_k^{u, \sss{\mathcal F}}\leq \Time$ (in other words, $N^{\sss{\mathcal F}}_{t,\Time}$ counts the number of individuals alive at time $t$ in the fake news epidemic who are not blocked by correct news before time $\Time$). Obviously, $N^{\sss{\mathcal F}}_{t}\leq N^{\sss{\mathcal F}}_{t,\Time}\leq Z^{\sss{\mathcal F}}_{t}$. The following corollary describes the asymptotics of $N^{\sss{\mathcal F}}_{t,\Time}:$

\begin{corollary}[Survival fake news in CTBP setting]
\label{cor:branching_process_survival}
Condition on the survival of the fake-news branching process, and assume that 
$\lambda^{\sss{\mathcal F}} > \lambda^{\sss{\mathcal R}}$, then
	\eqn{
	\label{non-blocked-individuals-remaining-lifetime-Time}
	\lim_{t \to \infty} \e^{-\lambda^{\sss{\mathcal F}}t} \E [N^{\sss{\mathcal F}}_{t,\Time}] = \frac{p^\star_{\sss\Time}}{\bar{\nu}},
	}
where
	\eqn{
	p^\star_{\sss\Time}\equiv p^\star+\sum_{m\geq 0} \P(\bar{\sigma}=m, \bar{S}_m^{\sss{\mathcal F}}>\Time).
	}
\end{corollary}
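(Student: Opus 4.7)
\medskip
\noindent
\textbf{Proof proposal.} The plan is to adapt the argument used to prove Lemma~\ref{lem:branching_process_survival}, starting from the decomposition
\[
N^{\sss{\mathcal F}}_{t,\Time} = N^{\sss{\mathcal F}}_t + \big(N^{\sss{\mathcal F}}_{t,\Time} - N^{\sss{\mathcal F}}_t\big),
\]
where the first term is handled by taking $\vep \to \infty$ in \eqref{non-blocked-individuals-remaining-lifetime}. The correction $N^{\sss{\mathcal F}}_{t,\Time} - N^{\sss{\mathcal F}}_t$ counts individuals alive at time $t$ whose spine from the root is blocked for the first time at some generation $\sigma = m$ with fake-news birth time $S_m^{\sss{\mathcal F}} > \Time$. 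I would first apply the many-to-one identity together with the branching property at the blocked ancestor to obtain
\[
\E\big[N^{\sss{\mathcal F}}_{t,\Time} - N^{\sss{\mathcal F}}_t\big] = \sum_{m \ge 0} \nu^m \int_\Time^t \P(\sigma = m, S_m^{\sss{\mathcal F}} \in du)\,\E\big[Z^{\sss{\mathcal F}}_{t-u}\big],
\]
since the subtree rooted at the blocked ancestor evolves as an independent fake-news age-dependent branching process whose expected number of alive individuals at residual time $t-u$ equals $\E[Z^{\sss{\mathcal F}}_{t-u}]$.

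Second, multiplying through by $\e^{-\lambda^{\sss{\mathcal F}} t}$ and absorbing the factor $\nu^m \e^{-\lambda^{\sss{\mathcal F}} u}$ via the stable-age tilting $\overline{F}^{\sss{\mathcal F}}(dy) = \nu\,\e^{-\lambda^{\sss{\mathcal F}} y}\,F^{\sss{\mathcal F}}(dy)$ already used in the proof of Lemma~\ref{lem:branching_process_survival}, the law of $(\sigma, S_m^{\sss{\mathcal F}})$ restricted to $\{\sigma = m\}$ turns into that of $(\bar\sigma, \bar S_m^{\sss{\mathcal F}})$ restricted to $\{\bar\sigma = m\}$, producing
\[
\e^{-\lambda^{\sss{\mathcal F}} t}\E\big[N^{\sss{\mathcal F}}_{t,\Time} - N^{\sss{\mathcal F}}_t\big] = \int_\Time^t \sum_{m \ge 0} \P(\bar\sigma = m, \bar S_m^{\sss{\mathcal F}} \in du)\,\e^{-\lambda^{\sss{\mathcal F}}(t-u)}\,\E\big[Z^{\sss{\mathcal F}}_{t-u}\big].
\]
Since $\e^{-\lambda^{\sss{\mathcal F}} s}\E[Z^{\sss{\mathcal F}}_s]$ converges as $s\to\infty$ (by \eqref{remaining-lifetime} with $\vep = \infty$) and is uniformly bounded in $s$ by \cite[Lemma~5.4 and (4.41)]{BhaHofHoo17}, I would run the two-sided Fatou/splitting argument from the proof of Lemma~\ref{lem:branching_process_survival} verbatim, splitting the integral at $u = t - T_0$ and letting $T_0 \to \infty$ after $t \to \infty$. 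The correction contribution converges to the constant from the lemma times $\sum_{m \ge 0} \P(\bar\sigma = m, \bar S_m^{\sss{\mathcal F}} > \Time)$, and combining with $\lim_{t\to\infty}\e^{-\lambda^{\sss{\mathcal F}} t}\E[N^{\sss{\mathcal F}}_t]$ yields the claimed $p^\star_\Time/\bar\nu$.

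The main obstacle I anticipate is the joint interchange of the $t \to \infty$ limit with the double sum-integral in $(m,u)$: the outer measure is a sum over all finitely-many-generations events $\{\bar\sigma = m\}$, and the inner integrand $\e^{-\lambda^{\sss{\mathcal F}}(t-u)}\E[Z^{\sss{\mathcal F}}_{t-u}]$ only converges pointwise in $u$. Fortunately the total mass of the outer measure is $\sum_{m \ge 0}\P(\bar\sigma = m, \bar S_m^{\sss{\mathcal F}} \in \cdot) = 1 - p^\star < \infty$, so it plays the role that the finite integrator $\P(L^{\sss{\mathcal F}} \in (z, z+\vep))\,dz$ played in Lemma~\ref{lem:branching_process_survival}; together with the uniform bound from \cite{BhaHofHoo17} this is exactly what makes the liminf and limsup match. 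The only genuine modification of the lemma's proof is the cosmetic replacement of the lower integration limit $0$ by $\Time$.
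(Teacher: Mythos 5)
Your proposal is correct and follows essentially the same route as the paper. The paper decomposes $N^{\sss{\mathcal F}}_{t,\Time} = Z^{\sss{\mathcal F}}_t - (Z^{\sss{\mathcal F}}_t - N^{\sss{\mathcal F}}_{t,\Time})$ and evaluates the expected count of individuals blocked with $\bar S_{\bar\sigma}^{\sss{\mathcal F}}\le\Time$ (integral over $u\in[0,\Time]$), whereas you decompose $N^{\sss{\mathcal F}}_{t,\Time} = N^{\sss{\mathcal F}}_t + (N^{\sss{\mathcal F}}_{t,\Time} - N^{\sss{\mathcal F}}_t)$ and evaluate the complementary mass with $\bar S_{\bar\sigma}^{\sss{\mathcal F}}>\Time$ (integral over $u\in(\Time,t]$); since the many-to-one identity, the stable-age tilting to $(\bar\sigma,\bar S_m^{\sss{\mathcal F}})$, and the Fatou/splitting argument controlling the $t\to\infty$ interchange are used identically, the two are the same calculation read from opposite ends of the same partition.
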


\proof The proof follows that of Lemma \ref{lem:branching_process_survival}, see e.g., \eqref{blocked-individuals}.  Note that now
$Z^{\sss{\mathcal F}}_{t} - N^{\sss{\mathcal F}}_{t,\Time}$ counts the number of individuals who are blocked before time $\Time$. Thus,
	\eqan{
	\e^{-\lambda^{\sss{\mathcal F}}t} \E [Z^{\sss{\mathcal F}}_{t} - N^{\sss{\mathcal F}}_{t,\Time}]
	&=\int_0^{\Time}\sum_{m=0}^\infty \P(\bar{\sigma}=m, \bar{S}_m^{\sss{\mathcal F}}  \in du) 
	\e^{-\lambda^{\sss{\mathcal F}}(t-u)} \E[Z^{\sss{\mathcal F}}_{t-u,\vep}].
	}
Taking the limit yields
	\eqan{
	\e^{-\lambda^{\sss{\mathcal F}}t} \E [Z^{\sss{\mathcal F}}_{t} - N^{\sss{\mathcal F}}_{t,\Time}]
	&\rightarrow \sum_{m\geq 0} \P(\bar{\sigma}=m, \bar{S}_m^{\sss{\mathcal F}}\leq \Time) 
	\frac{1}{\bar{\nu}}.
	}
Thus,
	\eqn{
	\e^{-\lambda^{\sss{\mathcal F}}t} \E [N^{\sss{\mathcal F}}_{t,\Time}]
	\rightarrow \frac{1}{\bar{\nu}}-\sum_{m\geq 0} \P(\bar{\sigma}=m, \bar{S}_m^{\sss{\mathcal F}}\leq \Time)
	=p^\star+\sum_{m\geq 0} \P(\bar{\sigma}=m, \bar{S}_m^{\sss{\mathcal F}}>\Time).
	}

\qed

We next adapt the above argument to the graph setting. Fix $\Time$ large, and split
	\eqn{
	\label{split-Nn-F}
	N_n^{\sss \mathcal{F}}=N_{n, {\sss <\Time}}^{\sss \mathcal{F}}-N_{n, {\sss \geq \Time}}^{\sss \mathcal{F}},
	}
where $N_{n, {\sss <\Time}}^{\sss \mathcal{F}}$ denotes the number of vertices reached by the potential fake news epidemic that are not killed by the correct news before time $\Time$, so that $N_{n, {\sss \geq \Time}}^{\sss \mathcal{F}}$ is the number of vertices reached by the potential fake news epidemic that are killed by the correct news after time $\Time$.

Lemma \ref{lem:branching_process_survival}, together with the powerful results from \cite{BhaHofHoo17}, can be used to prove that the first term in \eqref{split-Nn-F} remains positive:

\begin{lemma}[Probability of no early path blocking is positive]
\label{lem-early-path-blocking-positive-prob}
	\eqn{
	\liminf_{\Time\rightarrow \infty}\liminf_{n\rightarrow \infty} \frac{1}{n}\expec[N_{n, {\sss <\Time}}^{\sss \mathcal{F}}]=p^\star>0.
	}
\end{lemma}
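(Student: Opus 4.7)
The plan is to identify $\tfrac{1}{n}\E[N_{n,<\Time}^{\sss \mathcal{F}}]$ with a probability over an independent uniform pair $(U,U')$ and then combine the potential-smallest-weight-graph construction used just above with local convergence and Corollary \ref{cor:branching_process_survival}. First,
\begin{equation*}
\frac{1}{n}\E[N_{n,<\Time}^{\sss \mathcal{F}}] = \P\bigl(U' \text{ is reached by the potential fake news from } U;\ \text{no blocking on that path before time } \Time\bigr).
\end{equation*}
Condition \ref{cond-degrees-regcond}(a) forces $\P(D\ge 2)=1$, hence $\P(D^\star-1\ge 1)=1$, so the offspring BP a.s.\ survives and the giant component of $\CMnd$ asymptotically covers every vertex; in particular, both $U$ and $U'$ lie in the giant with probability $1-o(1)$.

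Reachability is handled exactly as in the path-/loop-blocking discussion above: grow the potential-fake-news smallest-weight graphs forward from $U$ and backward from $U'$ up to time $\tfrac{1}{2\lambda^{\sss \mathcal{F}}_n}\log n + B$. By \cite[Proposition 2.3, Theorem 3.1, Proposition 3.2]{BhaHofHoo17}, the two graphs collide through some edge $e=(x,y)$ with probability arbitrarily close to $1$ as $B\to\infty$, producing the smallest-weight potential-fake-news path $U\to x\to y\to U'$. The early and late loop-blockings were already shown to be negligible, so path-blocking on this specific path is the only mechanism that can spoil the ``no blocking before time $\Time$'' event.

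The event ``no blocking before time $\Time$'' depends only on the initial sub-path whose fake-news arrival times lie in $[0,\Time]$. Along the smallest-weight path the fake-news edge weights are asymptotically i.i.d.\ with the stable-age law $\bar F^{\sss \mathcal{F}}$ from \eqref{eq:stable_fake}, the accompanying correct-news weights have joint law given by \eqref{eq:stable_real}, and consequently only $\Theta_{\sss \prob}(1)$ vertices of the path fall in $[0,\Time]$. By local weak convergence of $\CMnd$ to the unimodular branching process with offspring $D^\star-1$, together with the $\tfrac{1}{2}\log n$-scale CTBP coupling of \cite{BhaHofHoo17}, this finite initial segment, jointly in its fake- and correct-news weights, converges in distribution to the analogous segment in the age-dependent CTBP. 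Corollary \ref{cor:branching_process_survival} identifies the corresponding probability as $p^\star_{\sss\Time}$, and the decomposition $p^\star_{\sss\Time} = p^\star + \sum_{m\ge 0}\P(\bar\sigma=m,\bar S^{\sss \mathcal{F}}_m>\Time)$ yields $p^\star_{\sss\Time}\to p^\star$ as $\Time\to\infty$ by dominated convergence, producing the claimed limit.

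The main obstacle is transporting the \emph{bivariate} stable-age law of $(L^{\sss \mathcal{F}},L^{\sss \mathcal{R}})$ through the CTBP coupling, since we do not assume any independence between $L^{\sss \mathcal{F}}$ and $L^{\sss \mathcal{R}}$. This requires applying the argument of \cite[Theorem 1.2]{BhaHofHoo17} to a marked-edge process, where the fake-news weights drive the exploration and the correct-news weights ride along as inert labels. Because the labels do not affect the exploration dynamics and $(L^{\sss \mathcal{F}}_e,L^{\sss \mathcal{R}}_e)$ are i.i.d.\ across edges, the joint distributional convergence of the initial path segment ultimately follows from the \cite{BhaHofHoo17} coupling together with a standard continuous-mapping argument on the finitely many edges falling in the window $[0,\Time]$.
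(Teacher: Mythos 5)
Your proposal captures the overall reduction correctly: rewrite $\tfrac{1}{n}\E[N_{n,<\Time}^{\sss\mathcal{F}}]$ as the probability that a typical pair $(U,U')$ is joined by a potential fake-news smallest-weight path whose initial segment is not path-blocked before time $\Time$, then argue this probability converges to $p^\star_{\sss\Time}$ via the CTBP coupling, and let $\Time\to\infty$. This matches the paper's framing. But there is a genuine gap in the core step.

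You assert that ``this finite initial segment, jointly in its fake- and correct-news weights, converges in distribution to the analogous segment in the age-dependent CTBP,'' invoking local weak convergence and the coupling of \cite{BhaHofHoo17}. The problem is that the segment in question is not the neighborhood of a uniformly chosen vertex, nor a prescribed path in the CTBP coupling; it is the initial portion of \emph{whichever} path wins the optimization over all collision edges. That optimization is carried out at scale $\bar t_n \sim \tfrac{\log n}{2\lambda^{\sss\mathcal{F}}_n}$, while ``good'' vs.\ ``bad'' is determined at scale $O(\Time)$. You implicitly assume that the global selection of the smallest-weight path does not bias the near-root good/bad statistics. This is exactly the delicate point and cannot be taken for granted: a priori, the winning path might be more or less likely to have a good initial segment than a randomly chosen path, because of conditioning on the birth-time and residual-lifetime structure at scale $\bar t_n$. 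The paper handles this by assigning each alive half-edge in the SWG at time $s_n$ a good/bad type, showing via the martingales $M(s)$ and $M_{\msg}(s)$ (for $s\ge\Time$) that the proportion of good half-edges stabilizes to $P^\star_{\sss\Time}=\mathcal{M}_{\msg}/\mathcal{M}$, and then — crucially — observing that the quantity the optimization depends on (collision-edge birth time and residual lifetime) is \emph{independent} of the good/bad type, so that the good and bad collision edges form independent Poisson processes with intensities $P^\star_{\sss\Time}\Pi$ and $(1-P^\star_{\sss\Time})\Pi$. That independence is what licenses the conclusion that the optimal collision edge is good with probability $P^\star_{\sss\Time}(1+o_\P(1))$; the final step $\E[P^\star_{\sss\Time}]=p^\star_{\sss\Time}$ again comes from the martingale structure together with Corollary~\ref{cor:branching_process_survival}.

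Your ``marked-edge process'' remark points at the right dependence issue (carrying the bivariate $(L^{\sss\mathcal{F}},L^{\sss\mathcal{R}})$ law through the exploration), but it does not address the selection-bias issue, which persists even when the marks are i.i.d.\ over edges. Without the martingale/Poisson-splitting argument — or some explicit substitute for it — the claimed distributional convergence of the selected initial segment is an assertion of what needs to be proved, not a proof.
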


\proof Rewrite
	\eqn{
	\frac{1}{n}\expec[N_{n, {\sss <\Time}}^{\sss \mathcal{F}}]
	=\prob(\text{fake news from }\Ver_1 \text{ to }\Ver_2\text{ is not path-blocked before time }\Time), 
	}
where $\Ver_1$ and $\Ver_2$ are two vertices chosen uniformly at random from $[n]$.
\medskip

\paragraph{\bf Overview: good and bad collision edges.}
We rely on the analysis in \cite{BhaHofHoo17}, in particular the proof of \cite[Theorem 3.1]{BhaHofHoo17}. This theorem gives a precise analysis of the number of {\em collision edges} between the smallest-weight graphs (SWG) of fake news from $\Ver_1$ and $\Ver_2$. Here, \cite[Theorem 3.1]{BhaHofHoo17} conditions on these smallest-weight trees started from $\Ver_1$ and $\Ver_2$, respectively, up to time $s_n$ for some $s_n\rightarrow \infty$. These collision edges, and the times at which they occur, form an inhomogeneous Poisson process. Further, whp, the two smallest-weight trees started from $\Ver_1$ and $\Ver_2$ are {\em perfectly coupled} to the limiting branching processes up to time $s_n$ for some $s_n\rightarrow \infty$. \cite[Theorem 3.1]{BhaHofHoo17} states that the times at which these collision edge are formed, as well as the residual life-time on the collision edge (together with various other quantities) forms an inhomogeneous Poisson process. 


Consider the SWG of vertex $\Ver_1$ at time $s_n$, as well as the alive half-edges incident to it. We give each of these alive half-edges a {\em type}, which is {\em good} when the half-edge has not been path-blocked before time $\Time$, and {\em bad} when it has been path-blocked before time $\Time$. Below, we follow the analysis of the growth of the SWGs from $\Ver_1$ and $\Ver_2$ as in \cite[Section 6]{BhaHofHoo17}, as well as how the collision edges appear as a Poisson process in \cite[Section 7]{BhaHofHoo17}.

We give each collision edge a type that is the same as the type of the half-edge incident to the SWG of vertex $\Ver_1$. The smallest-weight path is obtained by taking the {\em optimal collision edge}, i.e. that collision edge for which the path between $\Ver_1$ and $\Ver_2$ that it completes, has minimal total weight. In terms of these notions,
	\eqn{
	\frac{1}{n}\expec[N_{n, {\sss <\Time}}^{\sss \mathcal{F}}]
	=\prob(\text{optimal collision edge is good}).
	}
We claim that this probability converges to $p_{\sss \Time}^\star>0$. We prove this below.
\medskip

\paragraph{\bf Close to deterministic growth of SWG after time $s_n$ and a useful martingale.}
In \cite[Section 6]{BhaHofHoo17}, it is explained that the active half-edges at time $t$ basically grow deterministically after time $s_n$. This is a crucial ingredient in the Poisson process limit proof performed in \cite[Section 7]{BhaHofHoo17}, where we apply it for times around some appropriately chosen $\bar{t}_n$. We now highlight this proof in order to investigate the effect of the types of the alive half-edges. \cite[Section 6]{BhaHofHoo17} proves \cite[Proposition 2.3]{BhaHofHoo17}, which describes the evolution of the SWG. It is proved that this dynamics is close to {\em deterministic} after time $s_n$, which is proved by the following two key ingredients:
\begin{itemize}
\item[(a)] a precise coupling result between the SWG dynamics and that of an $n$-dependent continuous-time branching process that follows from \cite[Proposition 2.2]{BhaHofHoo17}, and can be invoked now too; and
\item[(b)] a  conditional second moment method on the branching process arising in step (a).
\end{itemize}
Let $t_n=\log{n}/[2\lambda_n^{\sss{\mathcal F}}]$, and define $\bar{t}_n=t_n-\log{\Big(\mathcal{W}^{\sss{(1),\mathcal F}}\mathcal{W}^{\sss{(2),\mathcal F}}\Big)}/[2\lambda_n^{\sss{\mathcal F}}]$, where $(\mathcal{W}^{\sss{(1),\mathcal F}}_{s_n},\mathcal{W}^{\sss{(2),\mathcal F}}_{s_n})$ are the total number of half-edges incident to the potential fake-news SWG at times $s_n$. Let $\SWG^{\sss(j)}[\bar{t}_n+t, \bar{t}_n+t+s)$ denote the number of active half-edges in SWG of vertex $U_j$ at time $\bar{t}_n+t$ and residual life time in $[0,s]$.
In particular, the above two steps show that $\SWG^{\sss(j)}[\bar{t}_n+t, \bar{t}_n+t+s)$ satisfies (see \cite[(6.2)]{BhaHofHoo17})
	\eqn{
	\e^{-\lambda_n^{\sss{\mathcal F}} t_n}|\SWG^{\sss(j)}[\bar{t}_n+t, \bar{t}_n+t+s)|\convp \e^{\lambda^{\sss{\mathcal F}} t} F_{\sss R}(s)\sqrt{\Wcal^{\sss(j)}/\Wcal^{\sss(3-j)}}.
	}
The way this is proved is by first showing that $|\SWG^{\sss(j)}[\bar{t}_n+t, \bar{t}_n+t+s)|$ is close to $|\BP^{\sss(j)}_n[\bar{t}_n+t, \bar{t}_n+t+s)|$ as indicated in (a) (see \cite[first equality in (6.2)]{BhaHofHoo17}), and then proving a conditional second moment as in (b) to reach the conclusion in \cite[(6.2)]{BhaHofHoo17}.

We wish to adapt this argument so as to include the {\em types} of half-edges in step (b), i.e., whether the half-edges are {\em good} or {\em bad}. In step (b), what is being proved is that (see \cite[(6.14)]{BhaHofHoo17}), for some constant $A^{\sss{\mathcal F}}$,
	\eqn{
	\label{condit-moment-SWG-sn}
	\e^{-\lambda_n^{\sss{\mathcal F}} t_n}|\BP^{\sss(j)}_n[\bar{t}_n+t, \bar{t}_n+t+s)|
	=(1+\op(1))A^{\sss{\mathcal F}}\e^{\lambda^{\sss{\mathcal F}} t} F_{\sss R}(s) \e^{\lambda_n^{\sss{\mathcal F}}(\bar{t}_n-t_n-s_n)} \sum_{i\in \BP(s_n)}\e^{-\lambda_n^{\sss{\mathcal F}} R_i}.
	}
This is shown by a conditional second moment argument. Indeed, the conditional expectation of $\BP^{\sss(j)}[\bar{t}_n+t, \bar{t}_n+t+s)$ given $\mathscr{F}_{s_n}$ is computed to be equal to the rhs of \eqref{condit-moment-SWG-sn}. Then, by the branching property of $\BP^{\sss(j)}_n[\bar{t}_n+t, \bar{t}_n+t+s)$, it is shown that the conditional variance is $\op(1)$ times the rhs of \eqref{condit-moment-SWG-sn}, which shows that \eqref{condit-moment-SWG-sn} holds. Note that, since 
	\eqn{
	\label{martingale-CTBP-1}
	M(s)=\e^{-\lambda^{\sss{\mathcal F}} s}\sum_{i\in \BP(s)}\e^{-\lambda^{\sss{\mathcal F}} R_i}
	}
arises as
	\eqn{
	\label{martingale-CTBP-2}
	M(s)=\lim_{t\rightarrow \infty} \e^{-\lambda^{\sss{\mathcal F}} t}\expec\big[|\BP(t)|\mid \mathscr{F}_{s}\big],
	}
it immediately follows that $(M(s))_{s\geq 0}$ is a non-negative martingale. That, and some extensions of this observation, will be useful below.
\medskip

\paragraph{\bf Incorporating good and bad types of half-edges.}
We next discuss what happens when we keep track of the {\em types} of half-edges. Let $|\BP^{\sss(1)}_{n, \msg}[\bar{t}_n+t, \bar{t}_n+t+s)|$ denote the number of alive {\em good} half-edges at time $\bar{t}_n+t$ with residual life time in $[0,s]$. Applying the almost deterministioc growth argument explained below \eqref{condit-moment-SWG-sn}, we obtain that
	\eqn{
	\label{condit-moment-SWG1-sn-good}
	\e^{-\lambda_n^{\sss{\mathcal F}} t_n}|\BP^{\sss(j)}_{n, \msg}[\bar{t}_n+t, \bar{t}_n+t+s)|
	=(1+\op(1))A^{\sss{\mathcal F}}\e^{\lambda^{\sss{\mathcal F}} t} F_{\sss R}(s) \e^{\lambda_n^{\sss{\mathcal F}}(\bar{t}_n-t_n-s_n)} \sum_{i\in \BP_{\msg}(s_n)}\e^{-\lambda_n^{\sss{\mathcal F}} R_i},
	}
where, for $s>\Time$, $\BP_{\msg}(s)$ denotes the collection of alive good half-edges at time $s$. By the argument in \eqref{martingale-CTBP-1}--\eqref{martingale-CTBP-2}, it follows that, with
	\eqn{
	M_{\msg}(s)=\e^{-\lambda^{\sss{\mathcal F}} s}\sum_{i\in \BP_{\msg}(s)}\e^{-\lambda^{\sss{\mathcal F}} R_i},
	}
the process $(M_{\msg}(s))_{s\geq \Time}$ is also a non-negative martingale. This is only true for $s\geq \Time$, since the good half-edges after time $\Time$ only arise as descendants of good half-edges at time $\Time$. 

By the above argument, we obtain that the proportion of half-edges at time $\bar{t}_n+t$ with residual life-time in $[0,s]$ that is good equals
	\eqn{
	\frac{M_{\msg}(s_n)}{M(s_n)}(1+\op(1)).
	}
Further, both terms in the ratio are non-negative martingales, and thus converge a.s. Let $\Mcal_{\msg}$ and $\Mcal$ denote their limits, so that the asymptotic proportion of good half-edges is close to
	\eqn{
	P^\star_{\sss \Time}= \Mcal_{\msg}/\Mcal.
	}
Then, we conclude that the growth of the number of good and bad half-edges in $|\SWG^{\sss(1)}[\bar{t}_n+t, \bar{t}_n+t+s)|$ is close to deterministic, with the proportion of good half-edges being close to $P^\star_{\sss \Time}$, and the residual life-time distribution of the good half-edges being equal to that of all half-edges. By showing that $\E[P^\star_{\sss \Time}]=p^\star_{\sss \Time}$, we can then conclude the argument. Let us now provide the details.
\medskip

\paragraph{\bf A Poisson-process limit of good and total collision edges.}
Since the proof of \cite[Theorem 3.1]{BhaHofHoo17} in \cite[Section 7]{BhaHofHoo17} is quite relevant here, we will give a brief update on how this proof is organized. \cite[Theorem 3.1]{BhaHofHoo17} involves the properties of the collision edges between the smallest-weight trees of fake news from $\Ver_1$ and $\Ver_2$. Recall that time is re-centered as $\bar{t}_n=t_n-\log{\Big(\mathcal{W}^{\sss{(1),\mathcal F}}\mathcal{W}^{\sss{(2),\mathcal F}}\Big)}/[2\lambda_n^{\sss{\mathcal F}}]$, where $(\mathcal{W}^{\sss{(1),\mathcal F}}_{s_n},\mathcal{W}^{\sss{(2),\mathcal F}}_{s_n})$ are the total number of half-edges incident to the potential fake-news SWG at times $s_n$ and $t_n=\log{n}/[2\lambda_n^{\sss{\mathcal F}}]$. Further, whp, the SWGs at this time are {\em perfectly coupled} to two independent CTBPs. The reason for this alternative centering is that the limiting Poisson process of potential collision edges in \cite[Theorem 3.1]{BhaHofHoo17} becomes {\em deterministic}. Indeed, due to the fact that we are interested in collision edges {\em between} the two SWGs, the rate at time $t$ for such edges involves a factor $\mathcal{W}^{\sss{(1),\mathcal F}}\mathcal{W}^{\sss{(2),\mathcal F}}$. By the re-centering, such factors drop out, which simplifies the analysis considerably.

The proof in \cite[Section 7]{BhaHofHoo17} relies on the fact that the evolution of the SWGs is basically {\em deterministic} as argued above, and edges between the two start appearing when they both have size $\Theta_{\sss \prob}(\sqrt{n})$. \cite[Theorem 3.1]{BhaHofHoo17} contains several more features that are irrelevant to us (such as the number of edges in each of the two SWGs along the path leading up to the collision edge), or may be irrelevant (such as whether the edge is formed by a depleted half-edge from the SWG of $\Ver_1$ pairing to a half-edge incident to the SWG of $\Ver_2$ or vice versa). However, we instead wish to keep track of the number of half-edges incident to the SWG of $\Ver_1$ that are not path blocked before time $\Time$.

Due to the deterministic nature of the growth {\em both} of the number of good and bad half-edges, the process of times where good collision edges appear, and the process of times where bad collision edges appear, are {\em independent} Poisson processes with intensity $P^\star_{\sss \Time}\Pi$ and $(1-P^\star_{\sss \Time})\Pi$, respectively, where $\Pi$ is the total intensity of collision edges as identified in \cite[Theorem 3.1]{BhaHofHoo17}. 
\medskip

\paragraph{\bf Conclusion of the proof.}
The optimal collision edge solves an optimization problem in which its birth time as well as the residual life-time of the half-edge appears. Since both are {\em independent} of the good or bad type, we conclude that the optimal collision edge will be good with probability $P^\star_{\sss \Time}(1+\op(1))$, so that
	\eqan{
	\lim_{n\rightarrow \infty} \frac{1}{n}\expec[N_{n, {\sss <\Time}}^{\sss \mathcal{F}}]
	&=\lim_{n\rightarrow \infty}\prob(\text{optimal collision edge is good})\\
	&=\lim_{n\rightarrow \infty}\expec\Big[\prob(\text{optimal collision edge is good}\mid \mathscr{F}_{s})\Big]\nn\\
	&=\lim_{n\rightarrow \infty}\expec\big[P^\star_{\sss \Time}(1+\op(1))\big]=\expec[P^\star_{\sss \Time}],\nn
	}
by dominated convergence. The result now follows from Corollary \ref{cor:branching_process_survival}, which implies that
	\eqn{
	\label{large-A-limit-BP}
	\expec[P^\star_{\sss \Time}]=p^\star_{\sss \Time}.
	}
Indeed, for the fake and correct news epidemics on the branching process tree, we can perform the exact same analysis as above, to obtain that now (recall the definition of $N^{\sss{\mathcal F}}_{t,\Time}$ above Corollary \ref{cor:branching_process_survival})
	\eqn{
	\nn
	\e^{-\lambda^{\sss{\mathcal F}}t}N^{\sss{\mathcal F}}_{t,\Time}\convas \Mcal_{\msg}/\Mcal
	}
Since this convergence is also in $L^1$, 
	\eqn{
	\nn
	\e^{-\lambda^{\sss{\mathcal F}}t}\expec[N^{\sss{\mathcal F}}_{t,\Time}]\to \expec[\Mcal_{\msg}/\Mcal].
	}
Comparing this to Corollary \ref{cor:branching_process_survival}, we have that $\expec[\Mcal_{\msg}/\Mcal]=p^\star_{\sss \Time}$. Finally, using that 
	$$
	\lim_{\Time\rightarrow \infty}p^\star_{\sss \Time}=p^\star
	$$
completes the proof.
\qed
\medskip

The following lemma shows that a late path-blocking is unlikely:

\begin{lemma}[Probability of late path blocking vanishes]
\label{lem-late-path-blocking-unlikely}
	\eqn{
	\liminf_{\Time\rightarrow \infty}\limsup_{n\rightarrow \infty} \frac{1}{n}\expec[N_{n, {\sss \geq \Time}}^{\sss \mathcal{F}}]=0.
	}
\end{lemma}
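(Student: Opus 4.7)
The plan is to extend the Poisson-process / CTBP-coupling machinery of Lemma \ref{lem-early-path-blocking-positive-prob} by refining its two-class labeling of alive half-edges on the boundary of $\SWG^{\sss(1)}$ into three classes at the moment of collision: (I) path-blocked before time $\Time$; (II) not blocked before $\Time$ but blocked at some later point on the way to the collision edge; and (III) never path-blocked. A vertex is counted in $N^{\sss \mathcal{F}}_{n,{\sss \geq \Time}}$ precisely when the optimal collision edge is of type II, so the task reduces to estimating the asymptotic probability that the optimal collision edge has this type.

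Following the two-step argument of Lemma \ref{lem-early-path-blocking-positive-prob}, namely the perfect coupling of $\SWG^{\sss(1)}$ with the $n$-dependent CTBP via \cite[Proposition 2.2]{BhaHofHoo17} and the conditional second-moment argument on the CTBP, the three type-counts grow essentially deterministically after time $s_n$. Their asymptotic fractions converge in probability to $1-P^\star_{\sss \Time}$, $P^\star_{\sss \Time}-P^\star_{\sss \infty}$ and $P^\star_{\sss \infty}$, where $P^\star_{\sss \Time}$ is the martingale limit from Lemma \ref{lem-early-path-blocking-positive-prob} and $P^\star_{\sss \infty}$ is its analogue for never-blocked half-edges, satisfying $\expec[P^\star_{\sss \infty}]=p^\star$ as a direct consequence of \eqref{non-blocked-individuals} in Lemma \ref{lem:branching_process_survival}. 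Because the optimal collision edge is selected by minimum total path weight, independently of the type labels, the Poisson point process of collision edges of \cite[Theorem 3.1]{BhaHofHoo17} splits into three independent Poisson processes with intensities proportional to the three fractions. The optimal collision edge is therefore of type II with asymptotic probability $\expec[P^\star_{\sss \Time}-P^\star_{\sss \infty}]=p^\star_{\sss \Time}-p^\star$, and by the expectation-sum representation used at the end of the proof of Lemma \ref{lem-early-path-blocking-positive-prob} (together with dominated convergence), $\lim_n \frac{1}{n}\expec[N^{\sss \mathcal{F}}_{n,{\sss \geq \Time}}]=p^\star_{\sss \Time}-p^\star$. Since $p^\star_{\sss \Time}-p^\star=\P(\Time<\bar\sigma<\infty)$ and $\{\Time<\bar\sigma<\infty\}\searrow \emptyset$ as $\Time\to\infty$, the claimed $\liminf_{\Time\to\infty}\limsup_{n\to\infty} \frac{1}{n}\expec[N^{\sss \mathcal{F}}_{n,{\sss \geq \Time}}]=0$ follows.

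The main obstacle relative to Lemma \ref{lem-early-path-blocking-positive-prob} is that the type-III label ``never blocked'' is not frozen at time $\Time$ (unlike ``good until time $\Time$''), so the martingale $M_{\msg}$ used there does not transfer verbatim. This is handled by viewing the BP as realised once and for all with its i.i.d.\ $(L^{\sss \mathcal{F}},L^{\sss \mathcal{R}})$ pairs, under which the never-blocked status of every individual is measurable; the branching property restricted to the sub-BP of never-blocked descendants of a never-blocked individual then yields a nonnegative martingale $M^{\sss \infty}_{\msg}(s)=\e^{-\lambda^{\sss \mathcal{F}} s}\sum_{i\in \BP^{\sss \infty}_{\msg}(s)}\e^{-\lambda^{\sss \mathcal{F}} R_i}$ with a.s.\ and $L^1$ limit $\Mcal^{\sss \infty}_{\msg}$, and the identity $\expec[\Mcal^{\sss \infty}_{\msg}/\Mcal]=p^\star$ follows from the $L^1$-convergence implicit in \eqref{non-blocked-individuals}. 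This is the one genuinely new technical input needed to complete the three-way splitting, after which the rest of the argument is a direct repetition of Lemma \ref{lem-early-path-blocking-positive-prob}.
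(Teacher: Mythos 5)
You correctly identify the central obstacle --- that the never-blocked label is not frozen at time $\Time$ --- but the proposed fix does not close it, and the gap there is genuine. The object $M^{\sss\infty}_{\msg}(s)=\e^{-\lambda^{\sss\mathcal{F}}s}\sum_{i\in\BP^{\sss\infty}_{\msg}(s)}\e^{-\lambda^{\sss\mathcal{F}}R_i}$ is \emph{not} a martingale. The reason $M_{\msg}(s)$ in Lemma~\ref{lem-early-path-blocking-positive-prob} is a martingale for $s\geq\Time$ is precisely that the good/bad label is frozen at time $\Time$: for $s\geq\Time$ the good alive population is a disjoint union of \emph{ordinary, unkilled} sub-branching processes rooted at the good individuals alive at time $\Time$, so each contributes the standard Nerman martingale functional. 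The never-blocked population has no such structure. A never-blocked individual $v$ alive at time $s$ carries a state variable, namely its headroom to the killing barrier (the current gap between the correct- and fake-news clocks along the path from the root to $v$), and the expected never-blocked progeny of $v$ depends on that headroom: a particle close to the barrier spawns fewer surviving descendants than one far from it. So the never-blocked subtree is a \emph{state-dependent} pruning --- a branching random walk with absorption --- not a plain sub-BP; the unweighted functional $\sum_i\e^{-\lambda^{\sss\mathcal{F}}R_i}$ does not compensate for the varying headroom, and a genuine martingale for the killed process would have to weight each particle by a harmonic function of its headroom. Declaring the weights revealed ``once and for all'' makes the label measurable but does not manufacture the martingale property. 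Consequently neither the a.s.\ convergence of the ratio to some $P^\star_{\sss\infty}$, nor the claimed splitting of the collision-edge Poisson process into three independent components, nor the associated conditional second-moment step, is established.

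The paper does not refine Lemma~\ref{lem-early-path-blocking-positive-prob} at all; it proves the bound by a direct first-moment path count. After discarding paths whose fake-news weight exceeds $t_n^+=\log n/\lambda^{\sss\mathcal{F}}_n+B$ (a contribution that vanishes as $B\to\infty$, uniformly in $\Time$, by \cite{BhaHofHoo17}), it bounds $\tfrac{1}{n}\E[N^{\sss\mathcal{F}}_{n,\geq\Time}]$ via Janson's estimate $\E[P_k]\leq n\E[D_n]\nu_n^{k-1}$ on the number of $k$-step paths, conditions on the first step $\sigma$ at which a late overtaking occurs, and applies the exponential tilt $\nu_n^m\e^{-\lambda^{\sss\mathcal{F}}_n S^{\sss\mathcal{F}}_{m,n}}$ together with the uniform CTBP bound $\E[Z_n^*(t)]\leq C\e^{\lambda^{\sss\mathcal{F}}_n t}$ from \cite{BhaHofHoo17}. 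This collapses the whole sum to $C\e^{\lambda^{\sss\mathcal{F}}B}\,\P\big(\exists m\colon\Time\leq\bar S^{\sss\mathcal{R}}_m<\bar S^{\sss\mathcal{F}}_m\big)$, which tends to zero as $\Time\to\infty$ by Lemma~\ref{lem-sad-correct}. Since the lemma only asks for an upper bound, this elementary first-moment computation suffices; the exact limit $p^\star_{\sss\Time}-p^\star$ that your route aims for is more than what is needed and, as above, considerably harder to justify.
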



\proof 
For this, we will crucially rely on the analysis in \cite[Section 4]{BhaHofHoo17}, where, under the assumptions of Theorem \ref{thm-strong-CM}, the expected number of alive individuals in the CTBP are computed rather precisely relying on the key renewal theorem. This proof is inspired by Samuels \cite{Samu71}, and is presented rather nicely by Harris in \cite[Chapter VI]{Harr63}. It is this argument that is crucially used in \cite[Section 4]{BhaHofHoo17}, where it is adapted to apply under the $X\log{X}$ condition that follows from \eqref{cond-D2logD}.

We split
	\eqn{
	 \frac{1}{n}\expec[N_{n, {\sss \geq \Time}}^{\sss \mathcal{F}}]= \frac{1}{n}\expec[N_{n, {\sss \geq \Time}}^{\sss \mathcal{F}}(1)]+ \frac{1}{n}\expec[N_{n, {\sss \geq \Time}}^{\sss \mathcal{F}}(2)],
	}
where the split depends on whether the potential fake news epidemics reaches $\Ver'$ before time $t_n^+$, where $t_n^+=\log{n}/\lambda_n^{\sss{\mathcal F}}+B$, for some $B$ large. The contribution $\expec[N_{n, {\sss \geq \Time}}^{\sss \mathcal{F}}(2)]/n$ where this occurs later than $t_n^+$ vanishes when $B\rightarrow \infty$, so that
	\eqn{
	 \liminf_{B\rightarrow \infty}\liminf_{n\rightarrow \infty}\frac{1}{n}\expec[N_{n, {\sss \geq \Time}}^{\sss \mathcal{F}}(2)]=0,
	 }
uniformly in $\Time$. Thus, we can fix $B=B_\vep$ such that $\frac{1}{n}\expec[N_{n, {\sss \geq \Time}}^{\sss \mathcal{F}}(2)]\leq \vep/2$ for all $n$ large.

We continue with $\expec[N_{n, {\sss \geq \Time}}^{\sss \mathcal{F}}(1)]$. By \cite[Lemma 5.1]{Jans09b}, the number of $k$-step paths $P_k$ satisfies
	\eqn{
	\expec[P_k] \leq n\expec[D_n] \nu_n^{k-1}.
	}
Thus, 
	\eqan{
	\label{Nn-large-mean}
	\frac{1}{n}\E[N_{n, {\sss \geq \Time}}^{\sss \mathcal{F}}(1)]
	&\leq \frac{\expec[D_n]}{n\nu_n}\sum_{k=0}^\infty \nu^k_n \P(S_{k,n}^{\sss{\mathcal F}} \leq t_n^+, \Time\leq S_{m,n}^{\sss{\mathcal R}} < S_{m,n}^{\sss{\mathcal F}} \text{ for some }m\in [k]).
	}
We define the stopping time
	\eqn{
	\sigma=\inf\big\{m\colon \Time \leq S_{m,n}^{\sss{\mathcal R}} < S_{m,n}^{\sss{\mathcal F}}\big\}
	}
to denote the first $m$ for which $\Time\leq S_{m,n}^{\sss{\mathcal R}} < S_{m,n}^{\sss{\mathcal F}}$. We condition on $\sigma=m$ and on $S_{m,n}^{\sss{\mathcal F}}$, and split
	\eqan{
	\label{Nn-large-mean-b}
	\frac{1}{n}\E[N_{n, {\sss \geq \Time}}^{\sss \mathcal{F}}(1)]
	&\leq \frac{\expec[D_n]}{n\nu_n}\sum_{m,k=0}^\infty \nu^k_n \expec\Big[\P(S_{k,n}^{\sss{\mathcal F}} \leq t_n^+\mid \sigma=m, S_{m,n}^{\sss{\mathcal F}})\Big]\\
	&=\sum_{m=0}^\infty \nu^m_n \expec\Big[\sum_{k\geq m}\nu_n^{k-m} \indic{\sigma=m} \P(S_{k,n}^{\sss{\mathcal F}} \leq t_n^+\mid \sigma=m, S_{m,n}^{\sss{\mathcal F}})\Big].\nn
	}
By the Markov property,
	\eqan{
	\sum_{k\geq m}\nu_n^{k-m} \P(S_{k,n}^{\sss{\mathcal F}} \leq t_n^+\mid \sigma=m, S_{m,n}^{\sss{\mathcal F}})
	&=\sum_{k\geq 0}\nu_n^k \P(\tilde S_{k,n}^{\sss{\mathcal F}} \leq t_n^+-S_{m,n}^{\sss{\mathcal F}}\mid S_{m,n}^{\sss{\mathcal F}}),
	}
where $(\tilde S_{k,n}^{\sss{\mathcal F}})_{k\geq 0}$ is an independent random walk. We next use that
	\eqn{
	\sum_{k\geq 0}\nu_n^k \P(\tilde S_{k,n}^{\sss{\mathcal F}} \leq t)=\E\left[Z^*_n(t)\right],
	}
where $Z^*_n(t)$ denotes the total number of individuals that have ever been alive up to time $t$ in the CTBP. See \cite[Lemma 5.4 and (4.41)]{BhaHofHoo17} for an analysis of this quantity that shows that 
	\eqn{
	\E\left[Z^*_n(t)\right]\leq C\e^{\lambda_n^{\sss{\mathcal F}} t}.
	}
We apply this to $t=t_n^+-S_{m,n}^{\sss{\mathcal F}}$, which yields
	\eqn{
	 \E\left[Z^*_n(t_n^+-S_{m,n}^{\sss{\mathcal F}})|S_{m,n}^{\sss{\mathcal F}}\right]\leq C n \e^{\lambda_n^{\sss{\mathcal F}}(B-S_{m,n}^{\sss{\mathcal F}})} \quad \text{a.s.}
	 }
This leads us to 
	\eqan{
	\label{Nn-large-mean-c}
	\frac{1}{n}\E[N_{n, {\sss \geq \Time}}^{\sss \mathcal{F}}(1)]
	&\leq C\e^{\lambda_n^{\sss{\mathcal F}}B}\sum_{m=0}^\infty \nu^m_n \expec\Big[\e^{-\lambda_n^{\sss{\mathcal F}}S_{m,n}^{\sss{\mathcal F}}}
	\indic{\sigma=m}\Big]\\
	&=C\e^{\lambda_n^{\sss{\mathcal F}}B}\sum_{m=0}^\infty \P(\bar{\sigma}=m),\nn
	}
where now 
	\eqn{
	\bar{\sigma}=\inf\big\{m\colon \Time\leq \bar{S}_{m,n}^{\sss{\mathcal R}} < \bar{S}_{m,n}^{\sss{\mathcal F}}\big\}.
	}
Recall the definitions of $\bar{S}^{\sss{\mathcal R}}$ and $\bar{S}^{\sss{\mathcal F}}$ before Lemma \ref{lem:branching_process_survival}.  We end up with
	\eqan{
	\frac{1}{n}\E[N_{n, {\sss \geq \Time}}^{\sss \mathcal{F}}(1)]
	&\leq C\e^{\lambda_n^{\sss{\mathcal F}}B}\P(\bar{\sigma}<\infty)
	=C\e^{\lambda_n^{\sss{\mathcal F}}B}\P(\exists m\colon \Time\leq \bar{S}_{m,n}^{\sss{\mathcal R}} < \bar{S}_{m,n}^{\sss{\mathcal F}}).
	}
When $n\rightarrow \infty$, 
	\eqan{
	\limsup_{n\rightarrow \infty} \frac{1}{n}\E[N_{n, {\sss \geq \Time}}^{\sss \mathcal{F}}(1)]
	&\leq C\e^{\lambda^{\sss{\mathcal F}}B}\P(\exists m\colon \Time\leq \bar{S}_{m}^{\sss{\mathcal R}} < \bar{S}_{m}^{\sss{\mathcal F}}).
	}
Since $\expec[\bar{S}_{m}^{\sss{\mathcal R}}]>\expec[ \bar{S}_{m}^{\sss{\mathcal F}}]$ by Lemma \ref{lem-sad-correct}, the above vanishes as $\Time\rightarrow \infty$. Thus, for a given $B=B_{\vep}$, we can take $\Time=\Time_\vep(B)$ so large that $\P(\exists m\colon \Time \leq \bar{S}_{m}^{\sss{\mathcal R}} < \bar{S}_{m}^{\sss{\mathcal F}})\leq \vep/2$. Therefore, we can take $\Time=\Time_\vep$ so large that
	\eqn{
	\limsup_{n\rightarrow \infty} \frac{1}{n}\expec[N_{n, {\sss \geq \Time}}^{\sss \mathcal{F}}]\leq \vep/2+\vep/2=\vep.
	}
This completes the proof of Lemma \ref{lem-late-path-blocking-unlikely}.
%
%
\qed
\medskip

\paragraph{\bf Completion of the proof of Proposition \ref{prop-exposed-fake-news}.} The above arguments show that \eqref{aim-prop-exposed-fake-news-2} indeed holds, which completes the proof of Proposition \ref{prop-exposed-fake-news}.
\end{proof}



\subsection{Proof of no strong survival in Theorem \ref{thm-strong-CM} when $\lambda^{\sss \mathcal{R}}>\lambda^{\sss \mathcal{F}}$}
\label{sec-no-strong-CM}

We prove the second statement of Theorem \ref{thm-strong-CM}. A vertex is definitely not infected by fake news if the shortest path of the correct news to the vertex is shorter than the shortest path of the fake news. Hence,
	$$
	\frac{1}{n}\E\left[N_n^{\sss \mathcal{F}}\right] \le p_n,
	$$
where $p_n = \P(M_n^{\sss \mathcal{F}} < M_n^{\sss \mathcal{R}})$ and $M_n^{\sss \mathcal{F}/\mathcal{R}}$ are the weights of the shortest-weight paths for the fake and correct news, respectively, between $2$ typical points.

Thanks to \cite[Theorem 1.2]{BhaHofHoo17},
	$$
	M_n^{\sss \mathcal{F}} - \frac{\log n}{\lambda_n^{\sss \mathcal{F}}} \overset{d}{\to} Q^{\sss \mathcal{F}},
	$$
with a proper continuous random variable $Q^{\sss \mathcal{F}}$ and with a sequence $\lambda_n^{\sss \mathcal{F}} \to \lambda^{\sss \mathcal{F}}$ as $n \to \infty$. A similar result holds for correct news. As $\lambda^{\sss \mathcal{F}} < \lambda^{\sss \mathcal{R}}$, we can choose large values of $n$ such that $\lambda_n^{\sss \mathcal{F}} < \lambda_n^{\sss \mathcal{R}}$.
Then
	\begin{align*}
	p_n  = & \P\left(\big(M_n^{\sss \mathcal{F}} - \frac{\log n}{\lambda_n^{\sss \mathcal{F}}}\big) 
	- \big(M_n^{\sss \mathcal{R}} - \frac{\log n}{\lambda_n^{\sss \mathcal{R}}}\big) < \log n \big(\frac{1}{\lambda_n^{\sss \mathcal{R}}} - \frac{1}{\lambda_n^{\sss \mathcal{F}}}\big)\right)\\
	 \le &  \P\left(\big(M_n^{\sss \mathcal{F}} - \frac{\log n}{\lambda_n^{\sss \mathcal{F}}}\big) < \tfrac{1}{2} \log n \big(\frac{1}{\lambda_n^{\sss \mathcal{R}}} - \frac{1}{\lambda_n^{\sss \mathcal{F}}}\big)\right)\\
	 &\quad+ \P\left(-\big(M_n^{\sss \mathcal{R}} - \frac{\log n}{\lambda_n^{\sss \mathcal{R}}}\big) < \tfrac{1}{2} \log n \big(\frac{1}{\lambda_n^{\sss \mathcal{R}}} - \frac{1}{\lambda_n^{\sss \mathcal{F}}}\big)\right),
	\end{align*}
and the above tends to $0$, as the left-hand sides of the expressions inside the probabilities tend to proper random variables, while the right-hand sides tend to $-\infty$. The first-moment method then finishes the proof.

We next extend the above argument, and show that in this setting, the fake news is at most $n^{\lambda^{\sss \mathcal{F}}/\lambda^{\sss \mathcal{R}}+\vep}$-intermediate surviving for any $\vep>0$:

\begin{theorem}[Intermediate survival]
\label{thm-intermediate}
Fix $\vep>0$ and assume that $\lambda^{\sss \mathcal{R}}>\lambda^{\sss \mathcal{F}}$. Let the assumptions from Theorem \ref{thm-strong-CM} hold. Then, whp fake news is at most $\Theta_{\sss \prob}(1)n^{\lambda_n^{\sss \mathcal{F}}/\lambda_n^{\sss \mathcal{R}}}$-intermediate surviving.
\end{theorem}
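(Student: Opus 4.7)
The plan is to show $\E[N_n^{\sss \mathcal{F}}] = O\big(n^{\lambda_n^{\sss \mathcal{F}}/\lambda_n^{\sss \mathcal{R}}}\big)$, from which tightness of $N_n^{\sss \mathcal{F}}/n^{\lambda_n^{\sss \mathcal{F}}/\lambda_n^{\sss \mathcal{R}}}$, i.e., the $n^{\lambda_n^{\sss \mathcal{F}}/\lambda_n^{\sss \mathcal{R}}}$-intermediate upper-dying property, follows by Markov's inequality. Reusing the first-moment bound $\E[N_n^{\sss \mathcal{F}}] \leq n p_n$ with $p_n = \P(M_n^{\sss \mathcal{F}} < M_n^{\sss \mathcal{R}})$ from the proof of the no-strong-survival part of Theorem \ref{thm-strong-CM} at the start of this subsection, it suffices to establish
\begin{equation*}
p_n \leq C\, n^{\lambda_n^{\sss \mathcal{F}}/\lambda_n^{\sss \mathcal{R}}-1}
\end{equation*}
uniformly in $n$ for some constant $C<\infty$.

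To obtain this sharpened bound, I would apply an exponential Markov/Chernoff inequality. By \cite[Theorem 1.2]{BhaHofHoo17}, we write $M_n^{\sss \mathcal{F}} = \log n/\lambda_n^{\sss \mathcal{F}} + Q_n^{\sss \mathcal{F}}$ and $M_n^{\sss \mathcal{R}} = \log n/\lambda_n^{\sss \mathcal{R}} + Q_n^{\sss \mathcal{R}}$, with the prelimit fluctuations $Q_n^{\sss \mathcal{F}}, Q_n^{\sss \mathcal{R}}$ converging in distribution to proper limits $Q^{\sss \mathcal{F}}, Q^{\sss \mathcal{R}}$. Applying Markov's inequality with parameter $\theta = \lambda_n^{\sss \mathcal{F}}$ and using the identity $\lambda_n^{\sss \mathcal{F}}(1/\lambda_n^{\sss \mathcal{R}} - 1/\lambda_n^{\sss \mathcal{F}}) = -(1-\lambda_n^{\sss \mathcal{F}}/\lambda_n^{\sss \mathcal{R}})$, one gets
\begin{equation*}
p_n = \P\big(M_n^{\sss \mathcal{R}} > M_n^{\sss \mathcal{F}}\big) \leq \E\!\left[\e^{\lambda_n^{\sss \mathcal{F}}(M_n^{\sss \mathcal{R}} - M_n^{\sss \mathcal{F}})}\right] = n^{-(1-\lambda_n^{\sss \mathcal{F}}/\lambda_n^{\sss \mathcal{R}})}\,\E\!\left[\e^{\lambda_n^{\sss \mathcal{F}}(Q_n^{\sss \mathcal{R}} - Q_n^{\sss \mathcal{F}})}\right].
\end{equation*}
The choice $\theta = \lambda_n^{\sss \mathcal{F}}$ is precisely tuned to produce the target exponent in $n$; the problem then reduces to the uniform estimate
\begin{equation*}
\sup_n \E\!\left[\e^{\lambda_n^{\sss \mathcal{F}}(Q_n^{\sss \mathcal{R}} - Q_n^{\sss \mathcal{F}})}\right] < \infty.
\end{equation*}

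The main obstacle is establishing this uniform exponential-moment bound. From the smallest-weight-graph analysis in \cite{BhaHofHoo17}, $Q_n^{\sss \mathcal{F}}$ and $Q_n^{\sss \mathcal{R}}$ can each be decomposed, up to a light-tailed additive remainder arising from the collision edge, as $-\log\big(\mathcal{W}_n^{\sss{(1),\mathcal F}}\mathcal{W}_n^{\sss{(2),\mathcal F}}\big)/\lambda_n^{\sss \mathcal{F}}$ and $-\log\big(\mathcal{W}_n^{\sss{(1),\mathcal R}}\mathcal{W}_n^{\sss{(2),\mathcal R}}\big)/\lambda_n^{\sss \mathcal{R}}$ respectively, where $\mathcal{W}_n^{\sss{(j),\mathcal F}}$ and $\mathcal{W}_n^{\sss{(j),\mathcal R}}$ are the Malthusian martingale limits of the prelimit CTBPs started from the source $\Ver$ ($j=1$) and from the target $\Ver'$ ($j=2$). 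Substituting, the exponential moment reduces to the finiteness of $\E\big[(\mathcal{W}_n^{\sss{(j),\mathcal R}})^{-\lambda_n^{\sss \mathcal{F}}/\lambda_n^{\sss \mathcal{R}}}\big]$, combined with the martingale identity $\E[\mathcal{W}_n^{\sss{(j),\mathcal F}}]=1$. The nontrivial ingredient is this inverse moment of the correct-news martingale at exponent $\lambda_n^{\sss \mathcal{F}}/\lambda_n^{\sss \mathcal{R}} < 1$: under Kesten--Stigum (which holds thanks to \eqref{cond-D2logD}) together with $\P(D\geq 2)=1$ (which precludes extinction), the limit $\mathcal{W}^{\sss \mathcal{R}}$ is a.s.\ strictly positive with density at the origin vanishing fast enough (via analysis of the associated Biggins--Grey fixed-point equation) to ensure this inverse moment is finite. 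Uniformity in $n$ then follows from the detailed prelimit estimates already developed in \cite{BhaHofHoo17}, and a final application of Markov's inequality to $N_n^{\sss \mathcal{F}}$ completes the argument.
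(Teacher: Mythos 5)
Your approach is genuinely different from the paper's. The paper bounds $N_n^{\sss \mathcal{F}}$ by the size of the \emph{potential} fake-news flow at the deterministic time $\tilde t_n = \log n/\lambda_n^{\sss \mathcal{R}}$, since by the epidemic-curve result of \cite{BhaHofKom14} the correct news has reached a $1-o_{\sss\P}(1)$ fraction of the graph by (a little after) that time and fake news cannot spread from then on. It then applies a first-moment path-counting bound, $\E[\tilde N_n^{\sss\mathcal{F}}(\tilde t_n)] \leq \sum_k \Theta(1)\,\nu_n^k\, \P(S_k^{\sss\mathcal{F}}\leq \tilde t_n)$, and uses the exponential tilting $\nu_n^k\,\P(S_k^{\sss\mathcal{F}}\leq t) = \E[\e^{\lambda_n^{\sss\mathcal{F}}\bar S_k^{\sss\mathcal{F}}}\indic{\bar S_k^{\sss\mathcal{F}}\leq t}]$ together with \cite[Lemma~5.4]{BhaHofHoo17} to extract $O(1)\,n^{\lambda_n^{\sss\mathcal{F}}/\lambda_n^{\sss\mathcal{R}}}$ directly. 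Your route is instead to sharpen the bound $\frac1n\E[N_n^{\sss\mathcal{F}}]\le p_n=\P(M_n^{\sss\mathcal{F}}<M_n^{\sss\mathcal{R}})$ by a Chernoff estimate at tilting parameter $\theta=\lambda_n^{\sss\mathcal{F}}$.

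The gap is the step $\sup_n \E[\e^{\lambda_n^{\sss\mathcal{F}}(Q_n^{\sss\mathcal{R}}-Q_n^{\sss\mathcal{F}})}]<\infty$, which you have not established and which does not follow from the assumptions of Theorem~\ref{thm-strong-CM}. First, \cite[Theorem~1.2]{BhaHofHoo17} only gives \emph{distributional} convergence of the centered weights $Q_n^{\sss\mathcal{R}},Q_n^{\sss\mathcal{F}}$; it gives no uniform control of exponential moments, and in particular nothing uniform for the collision-edge and coupling-error remainders that you absorb into a ``light-tailed additive remainder.'' Second, your decomposition reduces to bounding $\E\big[(\Wcal_n^{\sss(1),\mathcal R}\Wcal_n^{\sss(2),\mathcal R})^{-\lambda_n^{\sss\mathcal{F}}/\lambda_n^{\sss\mathcal{R}}}\,\Wcal_n^{\sss(1),\mathcal F}\Wcal_n^{\sss(2),\mathcal F}\big]$; the fake- and correct-news martingale limits are built on the \emph{same} local branching-process tree with possibly correlated edge weights, so you cannot factor this into $\E[(\Wcal^{\sss\mathcal R})^{-\alpha}]$ times $\E[\Wcal^{\sss\mathcal F}]=1$, and any H\"older-type separation would demand higher moments of $\Wcal^{\sss\mathcal F}$ (and hence of $D$) that are not assumed. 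Third, even the factored quantity $\E[(\Wcal^{\sss\mathcal R})^{-\alpha}]$ need not be finite: Kesten--Stigum and $\P(D\ge 2)=1$ only give $\Wcal^{\sss\mathcal R}>0$ a.s., not a negative moment of order $\alpha=\lambda^{\sss\mathcal{F}}/\lambda^{\sss\mathcal{R}}\in(0,1)$. When $\P(D=2)>0$ (so $\P(D^\star-1=1)>0$), the left tail of the martingale limit is only polynomial, with critical exponent governed by $\P(D^\star-1=1)$, and negative moments of order close to $1$ can diverge. The paper's proof needs none of this: it never touches negative moments or joint exponential moments of martingale limits, only the positive exponential tilt of the forward random walk, for which \cite[Lemma~5.4 and~(5.38)]{BhaHofHoo17} already supplies the uniform bound.
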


\proof By \cite{BhaHofKom14}, and under the assumptions of Theorem \ref{thm-strong-CM},
	\eqn{
	\label{epid-curve} 
	\frac{N_n^{\sss \mathcal{R}} \Big(t + \frac{\log{(n/\mathcal{W}^{\sss \mathcal{R}}_{s_n}})}{\lambda_n^{\sss \mathcal{R}}}\Big)}{n} 
	\convp G(t),
	}
for some distribution function $G$ with support $\mathbb{R}$. 
\smallskip

Note that $\lambda_n^{\sss \mathcal{R}}\rightarrow \lambda^{\sss \mathcal{R}}$ and $\mathcal{W}^{\sss \mathcal{R}}_{s_n}\convd \mathcal{W}^{\sss \mathcal{R}}$. Therefore, at time $T_n = \frac{1+\varepsilon}{\lambda^{\sss \mathcal{R}}} \log n$,
	\eqn{
	\label{Nn-conv}
	\frac{N_n^{\sss \mathcal{R}}(T_n)}{n} \convp 1.
	}

Let $\tilde{N}_n^{\sss \mathcal{F}}(t)$ denote the potential number of vertices reached by fake news at time $t$, i.e., in a model without competition, as defined in Section \ref{sec-strong-CM}. Then, a.s.,
	\eqn{
	N_n^{\sss \mathcal{F}}(\frac{\log n}{\lambda_n^{\sss \mathcal{R}}}) \le \tilde{N}_n^{\sss \mathcal{F}} (\frac{\log n}{\lambda_n^{\sss \mathcal{R}}}).
	}
In fact, by  \cite{BhaHofHoo17} and as in \eqref{Nn-large-mean}, now with $\tilde{t}_n=\frac{\log n}{\lambda_n^{\sss \mathcal{R}}}$,
	\eqan{
	\expec\big[\tilde{N}_n^{\sss \mathcal{F}} (\frac{\log n}{\lambda_n^{\sss \mathcal{R}}})\big]
	&\leq \frac{\expec[D_n]}{n\nu_n}\sum_{k=0}^\infty \nu^k_n \P(S_k^{\sss{\mathcal F}} \leq \tilde{t}_n)\\
	&=\frac{\expec[D_n]}{n\nu_n}\sum_{k=0}^\infty \expec\Big[\e^{\lambda_n^{\sss{\mathcal F}}\bar{S}_k^{\sss{\mathcal F}}}
	\indic{\bar{S}_k^{\sss{\mathcal F}}\leq \tilde{t}_n}\Big]\nn\\
	&=n^{\lambda_n^{\sss \mathcal{F}}/\lambda_n^{\sss \mathcal{R}}} \frac{\expec[D_n]}{n\nu_n}\sum_{k=0}^\infty \expec\Big[\e^{\lambda_n^{\sss{\mathcal F}}(\bar{S}_k^{\sss{\mathcal F}}-\tilde{t}_n)}
	\indic{\bar{S}_k^{\sss{\mathcal F}}\leq \tilde{t}_n}\Big]\nn\\
	&=O(1)n^{\lambda_n^{\sss \mathcal{F}}/\lambda_n^{\sss \mathcal{R}}},\nn
	}
by \cite[Lemma 5.4 and (5.38)]{BhaHofHoo17}. Therefore, $n^{-\lambda_n^{\sss \mathcal{F}}/\lambda_n^{\sss \mathcal{R}}}\tilde{N}_n^{\sss \mathcal{F}} (\frac{\log n}{\lambda_n^{\sss \mathcal{R}}})$ is a tight sequence of random variables. As a result, the fake news is at most $\Theta_{\sss \prob}(1)n^{\lambda_n^{\sss \mathcal{F}}/\lambda_n^{\sss \mathcal{R}}}$-intermediate surviving, and this completes the proof.
%
%
\qed

\subsection{Strong survival for explosive fake news: Proof of Theorem \ref{thm-strong-CM-explos}}
\label{sec-strong-CM-explos}
In the explosive setting, we know that the time for the fake news (in the absence of correct news) to reach the hubs converges in distribution to a random variable $V^{\sss {\mathcal{F}}}$, which is finite a.s. After the fake news reaches the hubs, it will almost instantaneously reach a positive proportion of the vertices. This is described by the so-called {\em epidemic curve}. Since the time for the correct news to reach the hubs is larger than $V^{\sss {\mathcal{F}}}$ plus any constant with strictly positive probability, the correct news cannot easily block the fake news. We conclude that strong survival holds. We now explain this proof in more detail.

We will show that, at time $V^{\sss {\mathcal{F}}}+t$, a positive proportion of the vertices will be found with positive probability by \cite{BarHofKom17}, and the existence of the so-called {\em epidemic curve}.  The epidemic curve describes how the fake news invades the graph right after time $T_{a_n}^{\sss {\mathcal{F}}}$, which is the time for the fake news to reach $a_n$ vertices in the graph for the first time (with $a_n$ chosen appropriately). Let us introduce some notation. We will see that at time $T_{a_n}^{\sss {\mathcal{F}}}+t$, a positive proportion of the vertices is found by the first-passage percolation flow. To describe how the winning type sweeps through the graph, we need some notation. Write $\barNt$ for the fraction of vertices that have been reached by the flow at time $T_{a_n}^{\sss {\mathcal{F}}}+t$, that is,
	\eqn{
	\barNt=\#\{v\colon v\in \SWG(T_{a_n}^{\sss {\mathcal{F}}}+t)\}/n,
	}
where $\SWG(t)$ denotes the set of alive half-edges in the fake news epidemic.
The epidemic curve follows from the following proposition:

\begin{proposition}[Epidemic curve]
\label{prop-degree-time-FPP}
As $n\rightarrow \infty$,
	\eqn{
	\label{Ntk-conv-FPP}
	\barNt\convp \prob(V^{\sss {\mathcal{F}}}\leq t).
	}
\end{proposition}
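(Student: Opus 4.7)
My plan is to establish the convergence in probability through the standard ``mean plus variance'' route: show that $\E[\barNt] \to \P(V^{\sss{\mathcal{F}}} \le t)$ and that $\Var(\barNt) \to 0$. Both steps rest on a dual representation of the event ``a given destination vertex has been reached'', combined with the local weak convergence of $\CMnD$ to the unimodular branching process with offspring distribution $D^\star - 1$ and edge weight $L^{\sss{\mathcal{F}}}$, which under the explosivity assumption has an a.s.\ finite explosion time $V^{\sss{\mathcal{F}}}$.

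For the expectation, write
\[
\E[\barNt] = \prob\bigl(V \in \SWG(T_{a_n}^{\sss{\mathcal{F}}} + t)\bigr),
\]
where $V$ is a uniformly random vertex independent of the construction. In the explosive regime, the results of \cite{BarHofKom17, BhaHofHoo09b, DeiHof16} imply that the forward smallest-weight exploration from the source reaches its first hub at a time that converges in distribution to the forward explosion time $V^{\sss{\mathcal{F}},(1)}$, and then sweeps through the ``hub backbone'' in additional time $\op(1)$; with $a_n$ chosen as in these references we have $T_{a_n}^{\sss{\mathcal{F}}} = V^{\sss{\mathcal{F}},(1)} + \op(1)$. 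By reversibility of edge weights and exchangeability of half-edges in the configuration model, the backward smallest-weight exploration from $V$ is described in the local limit by an independent copy of the same branching process, with explosion time $V^{\sss{\mathcal{F}},(2)}$ equal in law to $V^{\sss{\mathcal{F}}}$. The FPP distance from the source to $V$ thus equals $V^{\sss{\mathcal{F}},(1)} + V^{\sss{\mathcal{F}},(2)} + \op(1)$, so the event $\{V \in \SWG(T_{a_n}^{\sss{\mathcal{F}}} + t)\}$ reduces in the limit to $\{V^{\sss{\mathcal{F}},(2)} \le t\}$, which has probability $\P(V^{\sss{\mathcal{F}}} \le t)$.

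For the variance, pick two independent uniform destinations $V_1, V_2$ and use joint local weak convergence of $\CMnD$ around a pair of independent uniform vertices to argue that the backward explorations from $V_1$ and $V_2$ behave as independent explosive branching processes up to the moment each first reaches a hub. Conditionally on the forward exploration having reached the hub backbone by time $T_{a_n}^{\sss{\mathcal{F}}}$, the events $\{V_i \in \SWG(T_{a_n}^{\sss{\mathcal{F}}} + t)\}$, $i=1,2$, thus become asymptotically independent, giving
\[
\prob\bigl(V_1, V_2 \in \SWG(T_{a_n}^{\sss{\mathcal{F}}} + t)\bigr) \to \P(V^{\sss{\mathcal{F}}} \le t)^2,
\]
whence $\Var(\barNt) \to 0$ and the claim follows by Chebyshev's inequality.

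The main obstacle I anticipate is the ``hub-matching'' step inside the forward--backward decomposition: showing that, at time $T_{a_n}^{\sss{\mathcal{F}}}$, the set of hubs already occupied by the forward exploration is rich enough that \emph{every} backward exploration which has exploded within additional time $t$ necessarily connects to an occupied hub, while those that have not yet exploded cannot shortcut the hubs through low-degree paths. This sandwich requires carefully invoking the hub-to-hub transmission asymptotics of \cite{BarHofKom17, BhaHofHoo09b}, together with a first-moment bound ruling out non-hub shortcuts of FPP-length $O(1)$ in the infinite-variance degree setting. Once that matching is established, the mean and variance computations are routine consequences of local weak convergence applied to one and two random vertices, respectively.
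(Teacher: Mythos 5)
Your proposal is essentially the paper's argument. The paper also reduces $\barNt$ to a forward--backward FPP decomposition and runs a second-moment method, with the event $\{\Ver_1 \in \SWG(T_{a_n}^{\sss{\mathcal F}}+t)\}$ reducing, after centring at $T_{a_n}^{\sss{\mathcal F}}$, to the backward explosion time being $\le t$, and with two independent destinations giving the second-moment factorization. The only cosmetic difference is that the paper phrases this as a \emph{conditional} second-moment argument, conditioning on $\Psi_n=\SWG(T_{a_n}^{\sss{\mathcal F}})$ and showing $\expec[\barNt\mid\Psi_n]\convp\prob(V^{\sss{\mathcal F}}\le t)$ and $\Var(\barNt\mid\Psi_n)=\op(1)$; since the limit is a deterministic constant and everything is bounded, your unconditional mean-plus-variance version is equivalent. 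You correctly identified the one genuinely delicate point, the ``hub-matching'' step that makes the forward--backward split rigorous; the paper, like you, delegates this to the explosive-FPP analysis of \cite{BhaHofHoo09b} (via the corresponding argument in \cite{DeiHof16}), so no gap remains.
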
 

Proposition \ref{prop-degree-time-FPP} is interesting in its own right, as it implies the existence of an {\em epidemic curve}, as also derived in \cite{BhaHofKom14} in the setting of first-passage percolation on the CM with finite-variance degrees. Indeed, \eqref{Ntk-conv-FPP} together with the convergence $T_{a_n}^{\sss {\mathcal{F}}}\convd V^{\sss {\mathcal{F}}}$ shows that the fraction of invaded vertices at time $t$ (or infected vertices when interpreting first-passage percolation as a model for disease spread) converges in distribution to $\prob(V^{\sss {\mathcal{F}}}_2<t-V\mid V^{\sss {\mathcal{F}}})=F_{\sss V^{\sss {\mathcal{F}}}}(t-V^{\sss {\mathcal{F}}})$ for every $t\in {\mathbb R}$. Since $F_{\sss V^{\sss {\mathcal{F}}}}(s)>0$ for every $s>0$, the above result implies that at time $t+V^{\sss {\mathcal{F}}}$, the fake news indeed reaches a positive proportion of the vertices in the graph. This will be an essential ingredient to also show that there is strong survival. We refer to \cite{BhaHofKom14} for an extensive discussion of the consequences of an epidemic-curve result.

\proof This proof is a minor modification of \cite[Proof of Proposition 4.2]{DeiHof16}. Let $\Ver_1$ be a randomly chosen vertex and write $\indicwo{\sss \Ver_1}(t)$ for the indicator taking the value 1 when vertex $\Ver_1$ is found by the fake news at time $T_{a_n}^{\sss {\mathcal{F}}}+t$, where $a_n$ will be chosen appropriately later on. Note that, with $G_n$ denoting the realisation of the configuration model including its edge weights,
	\eqn{
	\label{Ntk-repr}
	\barNt=\expec[\indicwo{\sss \Ver_1}(t)\mid G_n].
	}
We will show that $\E[\indicwo{\sss \Ver_1}(t)\mid G_n]\convp \prob(V^{\sss {\mathcal{F}}}\leq t)$ by using a conditional second moment method. Recall that $\SWG(s)$ denotes the SWG started from $U$ at real time $s$ with exploration. We perform the analysis conditionally on $\SWG\left(T_{a_n}^{\sss {\mathcal{F}}}\right):=\Psi_n$, that is, the first-passage percolation exploration graph at the time when it reaches size $a_n$. Here $a_n$ can be taken as $n^\delta$ for some $\delta\in (0,1)$.

To apply the conditional second moment method, first note that
	\eqan{
	\expec[\barNt \mid \Psi_n]&=\prob\big(\indicwo{\sss \Ver_1}(t)=1 \mid \Psi_n)\nn\\
	&=\prob(\mbox{$\Ver_1$ is found by fake news at time $T_{a_n}^{\sss {\mathcal{F}}}+t$}
	\mid \Psi_n\big),\nn
	}
and
	\eqan{
	\expec[(\barNt)^2 \mid \Psi_n]
	&=\prob(\indicwo{\sss \Ver_1}(t)=\indicwo{\sss \Ver_2}(t)=1 \mid \Psi_n)\nn\\
	&=\prob(\mbox{$\Ver_1,\Ver_2$ are found by fake news at time $T_{a_n}^{\sss {\mathcal{F}}}+t$}\mid
	 \Psi_n).\nn
	}
Therefore, it suffices to show that the first factors in the above two right-hand sides converge to $\prob(V^{\sss {\mathcal{F}}}\leq t)$ and $\prob(V^{\sss {\mathcal{F}}}\leq t)^2$, respectively. Indeed, in this case,
	$$
	\expec[\barNt\mid \Psi_n]\convp \prob(V^{\sss {\mathcal{F}}}\leq t),
	$$
while ${\mathrm{Var}}(\barNt\mid \Psi_n)=\op(1)$, so that $\barNt\convp \prob(V^{\sss {\mathcal{F}}}\leq t)$, as required.
\medskip

We can construct $\Psi_n$ by first growing the one-type SWG from vertex $\Ver$ to size $a_n$. The time when this occurs is $T_{a_n}^{\sss {\mathcal{F}}}$, which converges in distribution to $V^{\sss {\mathcal{F}}}$.

Let $X(\Ver\leftrightarrow \Ver_1)$ denote the passage time between vertices $\Ver$ and $\Ver_2$ in a one-type process with only the fake news infection. It follows from the analysis in \cite{BhaHofHoo09b} that $X(\Ver\leftrightarrow \Ver_1)$ converges in distribution to $V^{\sss {\mathcal{F}}}+V_2^{\sss {\mathcal{F}}}$: As described above, we first grow $\SWG(T_{a_n}^{\sss {\mathcal{F}}})$. Then we grow the SWG from $\Ver_1$ until it hits $\SWG(T_{a_n}^{\sss {\mathcal{F}}})$. This occurs at a time that converges in distribution to $V^{\sss {\mathcal{F}}}$ -- indeed, $V^{\sss {\mathcal{F}}}$ describes the asymptotic explosion time for an exploration process started at a uniform vertex. Hence,
	\begin{equation}\label{eq:onetypeconv}
	\prob(X(\Ver\leftrightarrow \Ver_1)\leq T_{a_n}^{\sss {\mathcal{F}}}+t\mid \Psi_n)
	\convp \prob(V^{\sss {\mathcal{F}}}\leq t).
	\end{equation}
In a similar way, we conclude that $\prob(X(\Ver\leftrightarrow \Ver_1),X(\Ver\leftrightarrow \Ver_2)\leq T_{a_n}^{\sss {\mathcal{F}}}+t\mid \Psi_n)\convp \prob(V^{\sss {\mathcal{F}}}\leq t)^2.$ The second moment method now proves \eqref{Ntk-conv-FPP}.
\qed
\medskip

We are now ready to complete the proof of Theorem \ref{thm-strong-CM-explos}:

\begin{proof}[Proof of Theorem \ref{thm-strong-CM-explos}]
By \eqref{eq:feasibility}, there must be an $\vep>0$ such that $\P(L^{\sss \mathcal{F}} < L^{\sss \mathcal{R}}-\vep) > 0$. Further, since $V^{\sss {\mathcal{F}}}$ is a continuous non-negative random variable with $\inf {\rm supp}(V^{\sss {\mathcal{F}}})=0$, also $\P(V^{\sss {\mathcal{F}}}<\vep/2)>0$. If the traversal time for the correct news of {\em all} edges incident to $\Ver$ satisfies $L^{\sss \mathcal{F}} < L^{\sss \mathcal{R}}-\vep$, then the correct news will, with positive probability, not have found any vertices at time $V^{\sss {\mathcal{F}}}+\vep/2$. Thus, it cannot block the fake news before time $T_{a_n}^{\sss {\mathcal{F}}}+\vep/2$, while, by Proposition \ref{prop-degree-time-FPP}, the fake news will, with (conditional) probability converging to 1, have reached a positive proportion of the vertices. This proves the strong survival in Theorem \ref{thm-strong-CM-explos}.
\end{proof}
\medskip

\paragraph{\bf Acknowledgement.}
The work of RvdH is supported by the Netherlands Organisation for Scientific Research (NWO) through VICI grant 639.033.806 and the Gravitation {\sc Networks} grant 024.002.003. This project was initiated at the workshop in honour of the 85th birthday of Alexander Borovkov at Novosibirsk State University. We thank the organisors for bringing us together.

\bibliographystyle{plain}
\def\cprime{$'$}


\end{document}